\newtheorem{thm}{Theorem} \newtheorem{lemma}{Lemma} \newtheorem{coro}{Corollary}
\let\paragraph\subsection
\title{Complexes, Graphs, Homotopy, Products and Shannon capacity}
\author{Oliver Knill}
\date{December 13, 2020}
\address{Department of Mathematics \\ Harvard University \\ Cambridge, MA, 02138 }
\subjclass{57M15, 68R10,05C50}
\keywords{Simplicial Complexes, Graphs, Homotopy, Shannon capacity}
\begin{document}
\maketitle

\begin{abstract}
A finite abstract simplicial complex $G$ defines the
Barycentric refinement graph $\phi(G) = (G,\{ (a,b), a \subset b \; {\rm or} \; b \subset a \})$ and the connection graph 
$\psi(G) = (G,\{ (a,b), a \cap b \neq \emptyset \})$. We note here that both functors $\phi$ and $\psi$
from complexes to graphs are invertible on the image (Theorem 1) and that $G,\phi(G),\psi(G)$ all have the same 
automorphism group and that the Cartesian product of $G$ corresponding to the Stanley-Reisner product of $\phi(G)$ and
the strong Shannon product of $\psi(G)$, have the product automorphism groups. 
Second, we see that if $G$ is a Barycentric refinement, then $\phi(G)$ and $\psi(G)$ are graph homotopic 
(Theorem 2). Third, if $\gamma$ is the geometric realization functor, 
assigning to a complex or to a graph the geometric realization of its clique complex, then 
$\gamma(G)$ and $\gamma(\phi(G))$ and $\gamma(\psi(G))$ are all classically homotopic 
for a Barycentric refined simplicial complex $G$ (Theorem 3). The Barycentric assumption is necessary in Theorem 2 and 3.
There is compatibility with Cartesian products of complexes which manifests in the strong graph
product of connection graphs: if two graphs $A,A'$ are homotopic and $B,B'$ are homotopic, then 
$A \cdot B$ is homotopic to $A' \cdot B'$ (Theorem 4) leading to a commutative ring of homotopy classes of graphs.
Finally, we note (Theorem 5) that for all simplicial complexes $G$ as well as product $G=G_1 \times G_2 \cdots \times G_k$,
the Shannon capacity $\Theta(\psi(G))$ of $\psi(G)$ is equal to the number $f_0$ of zero-dimensional 
sets in $G$. An explicit Lowasz umbrella in $\mathbb{R}^{f_0}$ leads to the Lowasz number $\theta(G) \leq f_0$ and
so $\Theta(\psi(G))=\theta(\psi(G))=f_0$ making $\Theta$ compatible with disjoint union addition and strong multiplication.
\end{abstract}

\section{Theorem 1}

\paragraph{}
A finite set $G$ of non-empty sets that is closed under the operation of 
taking finite non-empty subsets is called a {\bf finite abstract simplicial complex}.
It defines a finite simple {\bf connection graph} $\psi(G)$ in which the vertices are the elements 
in $G$ and where two sets are connected if they intersect. In the {\bf Barycentric refinement graph}
$\phi(G)$, two vertices are connected if and only if one set is contained in the other. 
It is a subgraph of the connection graph $\psi(G)$.

\begin{thm}
$G$ can be recovered both from $\psi(G)$ or $\phi(G)$. 
\end{thm}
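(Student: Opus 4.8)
The plan is to give, for each of the two graphs, a reconstruction recipe phrased entirely in graph-theoretic terms that outputs a simplicial complex isomorphic to $G$. Since such a recipe is canonical, it sends isomorphic graphs to isomorphic complexes, which is exactly what ``invertible on the image'' asks.

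For $\psi(G)$ the recipe is clean. The key remark is that a vertex $v$ of $\psi(G)$, representing a set $x\in G$, is a \emph{simplicial vertex} (its neighborhood $N(v)$ induces a complete graph) if and only if $x$ is $0$-dimensional, i.e.\ $x=\{p\}$. Indeed, if $x=\{p\}$ then every $y\in G$ with $y\cap x\neq\emptyset$ contains $p$, so all such $y$ pairwise intersect and $N(v)$ is a clique; conversely if $|x|\ge 2$, pick $p\neq q$ in $x$, note $\{p\},\{q\}\in G$ by closure under subsets, both meet $x$, but $\{p\}\cap\{q\}=\emptyset$, so $N(v)$ contains a non-edge. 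Let $T$ be the set of simplicial vertices (the images of the $0$-dimensional sets $\{p\}$). For every vertex $v$ put $S(v)=\{w\in T : w=v \text{ or } w\sim v\}$; if $v$ represents $x$ then $S(v)=\{v_{\{p\}}:p\in x\}$, so under the identification $v_{\{p\}}\leftrightarrow p$ we get $S(v)=x$, the map $v\mapsto S(v)$ is injective, and $\{S(v):v\in V(\psi(G))\}$, viewed as a complex on vertex set $T$, is isomorphic to $G$.

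For $\phi(G)$ I would first observe that $\phi(G)$ is the comparability graph of the face poset $P(G)$, equivalently that the clique complex of $\phi(G)$ is the Barycentric refinement of $G$ (the cliques of $\phi(G)$ are exactly the chains of $P(G)$). So it suffices to recover $P(G)$, and then $G$ as the family of down-sets-of-atoms of $P(G)$, from this comparability graph. The target procedure is: transitively orient $\phi(G)$ so that every principal down-set of the resulting poset is a Boolean lattice, then read off $G$; once this is shown well-defined one also recovers $\psi(G)$ (since $v_x\sim_\psi v_y$ iff $v_x,v_y$ have a common lower bound $v_{x\cap y}$) and may fall back to the previous paragraph.

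The main obstacle is precisely the well-definedness of that orientation step, and it is a genuine one: the dimension of $x$ is \emph{not} recoverable from $\phi(G)$ as an abstract graph. If $G$ is the filled triangle, then $\phi(G)$ is the wheel $W_6$, whose rim rotation is a graph automorphism carrying the three $0$-dimensional vertices onto the three $1$-dimensional ones; there is no graph-invariant notion of ``$0$-dimensional vertex'' here, only a $0$-versus-$1$ ambiguity that is absorbed by an automorphism of the Barycentric refinement. Hence the work for $\phi(G)$ is to prove that every transitive orientation of $\phi(G)$ with all principal down-sets Boolean is isomorphic, as a poset, to $P(G)$ --- equivalently that the Barycentric refinement functor is injective up to isomorphism --- after which the reconstruction is canonical and Theorem~1 follows. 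I expect this by induction on $|G|$: the maximal cliques of $\phi(G)$ are the complete flags of $P(G)$, so $\dim G$ and the number of facets of each dimension are graph-invariant; a top-dimensional facet $v_F$ is then recognizable (for instance, $\phi(G)[N[v_F]]$ is the comparability graph of a Boolean lattice of rank $\dim G+1$ with its least element deleted), one deletes such a vertex --- obtaining $\phi(G\setminus\{F\})$ --- reconstructs the smaller complex, and glues $F$ back using that its faces are exactly the vertices of $N(v_F)$; a short check shows the order of deletions does not affect the isomorphism type of the output.
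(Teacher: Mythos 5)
Your reconstruction from $\psi(G)$ is correct and runs structurally parallel to the paper's: both arguments first locate the $0$-dimensional sets by a purely local graph criterion and then read off each set $x$ as the collection of atoms adjacent to its vertex. The only difference is the criterion itself: the paper detects atoms as the strict local minima of the degree function ($d(x)<\min_{y \sim x} d(y)$, proved via the observation that any $y$ containing an atom $x$ meets everything $x$ meets and more), whereas you detect them as the simplicial vertices, i.e.\ those whose neighborhood induces a clique. Both characterizations are valid, both are checkable in polynomial time, and the rest of the recipe is identical, so this half of your proposal is essentially the paper's proof with an equally good lemma substituted.

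The $\phi(G)$ half is where you and the paper part ways, and where your proposal has a genuine gap --- though you have also put your finger on a real defect in the paper. The paper disposes of $\phi(G)$ with the single sentence that ``the same proof also establishes'' the claim; your $W_6$ example shows this cannot be taken literally: in $\phi(K_3)$ the rim rotation is a graph automorphism exchanging the three $0$-dimensional vertices with the three $1$-dimensional ones, so no graph-invariant local criterion (degree minimality, simpliciality, or otherwise) can single out the atoms, and the reconstruction from $\phi(G)$ can at best be canonical up to isomorphism. Recognizing this is the right move, and it is more than the paper does. But your repair is only a sketch: the entire burden now rests on the claim that every transitive orientation of $\phi(G)$ whose principal down-sets are Boolean is isomorphic, as a poset, to the face poset of $G$, and you defer precisely the hard step to ``a short check shows the order of deletions does not affect the isomorphism type.'' That check is the whole point: when you glue a top facet $F$ back onto an inductively reconstructed $G\setminus\{F\}$, the atom-labelling of $G\setminus\{F\}$ was itself only determined up to a graph automorphism of $\phi(G\setminus\{F\})$, and you must show that any such ambiguity, restricted to $N(v_F)$, extends to an isomorphism of the final complex. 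Until that is argued, the $\phi$ half of the theorem is not proved; the paper's own one-line treatment (and its alternative via facet subgraphs) leaves the same uniqueness question open, so closing it would be a genuine improvement rather than a reproduction of the paper's argument.
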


\paragraph{}
The proof will be obvious, once the idea is seen. The reconstructions from $\phi(G)$ or $\psi(G)$ 
are identical. The reconstruction can be done fast, meaning that the cost is polynomial
in $n=|G|$. In particular, no computationally hard {\bf clique finding} is necessary in $\psi(G)$.
When looking at a graph like $\psi(G)$, we of course only assume to know the graph structure and not 
what set each node represents. As no explicit reconstruction of $G$ from $\psi(G)$ appears 
to have been written down before, this is done here. It will show that it is considerably simpler
than the construction of a set of sets from a general graph that is enabled by the 
{\bf Szpilrajn-Marczewski theorem}: any finite simple graph $A$ can be realized as a connection graph 
of a finite set $G$ of non-empty sets \cite{Szipilrajn-Marczewski,ErdoesGoodmanPosa}. Connection graphs are 
special: their adjacency matrix $A$ has the property that $L=1+A$ has determinant $1$ or $-1$ and 
the number of positive eigenvalues of $L$ is the number of even-dimensional sets in $G$.  

\paragraph{}
The {\bf automorphism group} ${\rm Aut}(G)$ is the set of permutations $T$ of $G$ which preserve the order structure:
$x \subset y$ if and only if $T(x) \subset T(y)$. The automorphism group of a graph is the automorphism
group of its Whitney simplicial complex. This is the same than the automorphism of its 1-dimensional skeleton
complex $G=V \cup E$ because if edges are mapped to edges then also complete graphs are mapped into complete graphs.
An automorphism $T$ of a graph is nothing else than a map from the graph to itself which is an isomorphism. 
A consequence of the proof is that $G,\phi(G),\psi(G)$ all have the same automorphism groups. 
Groups like ${\rm Aug}(G)$ is in the {\bf Klein Erlanger picture} an important object of a geometry as ${\rm Aut}(G)$ 
is a {\bf symmetry group}. {\bf Frucht's theorem} shows that any finite group can occur for a graph. By building the 
Whitney complex of this graph, we see that an8y finite group can occur as an automorphism group of a simplicial
complex and so also of a connection graph $\psi(G)$ or a Barycentric graph $\phi(G)$. 

\begin{coro}
For any simplicial complex $G$, all $\psi(G)$ and $\phi(G)$ have the same automorphism group. 
\end{coro}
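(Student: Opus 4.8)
The plan is to derive the corollary from Theorem 1 by a naturality argument, working throughout with permutations of the common vertex set $G$, so that ${\rm Aut}(G)$, ${\rm Aut}(\phi(G))$ and ${\rm Aut}(\psi(G))$ are all subgroups of the group of permutations of the set $G$; it then suffices to show that these three subgroups coincide (this is stronger than, and implies, the stated isomorphism of abstract groups).

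First I would check that every order-automorphism is a graph-automorphism of both graphs. If $T$ is a permutation of $G$ with $x \subset y \Leftrightarrow T(x) \subset T(y)$, then $T$ preserves the edge set of $\phi(G)$ by definition. For $\psi(G)$ one uses that $G$ is closed under non-empty subsets: $a \cap b \neq \emptyset$ holds if and only if $a$ and $b$ admit a common lower bound in the poset $(G,\subseteq)$ — indeed any non-empty subset of $a \cap b$ is such a bound, so if $v \in a\cap b$ then $\{v\} \in G$ works, and conversely a common lower bound is non-empty and contained in $a \cap b$ — and the property of having a common lower bound is an order-invariant, hence preserved by $T$. This gives ${\rm Aut}(G) \subseteq {\rm Aut}(\phi(G))$ and ${\rm Aut}(G) \subseteq {\rm Aut}(\psi(G))$.

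For the reverse inclusions I would invoke the reconstruction of Theorem 1. Its proof exhibits a rule $\mathcal{R}$ which, from the edge set of $\phi(G)$ alone — and, identically, from the edge set of $\psi(G)$ alone — recovers the inclusion relation of $G$. The essential point is that $\mathcal{R}$ is described purely combinatorially, using only relabeling-invariant features of the graph (degrees, neighborhoods, common neighbours, and the like), so that it is equivariant: $\mathcal{R}(S_{\ast} E) = S_{\ast}\mathcal{R}(E)$ for every permutation $S$ of $G$. Now let $S \in {\rm Aut}(\phi(G))$, so $S_{\ast} E_{\phi(G)} = E_{\phi(G)}$. Applying $\mathcal{R}$ and using equivariance, $S_{\ast}(\subseteq) = \mathcal{R}(S_{\ast} E_{\phi(G)}) = \mathcal{R}(E_{\phi(G)}) = (\subseteq)$, i.e. $S$ preserves inclusion and $S \in {\rm Aut}(G)$; the same argument with $\psi(G)$ in place of $\phi(G)$ gives ${\rm Aut}(\psi(G)) \subseteq {\rm Aut}(G)$. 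Combining with the previous paragraph, ${\rm Aut}(G) = {\rm Aut}(\phi(G)) = {\rm Aut}(\psi(G))$ as permutation groups, and in particular they are isomorphic; Frucht's theorem, quoted in the text, then transfers verbatim to $\phi$ and $\psi$.

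The only non-formal step is the equivariance of $\mathcal{R}$, and this is where I would spend care: one must make sure the reconstruction recipe of Theorem 1 never makes a choice depending on the names of the vertices — equivalently, that the subset $\rho(v) \subseteq G$ which the recipe assigns to a vertex $v$ depends only on the isomorphism type of the pointed graph $(\phi(G),v)$. Phrased categorically, this is precisely the assertion that the inverse constructed in Theorem 1 is a \emph{functor}, carrying graph isomorphisms to simplicial isomorphisms; once that is in hand the corollary is immediate, because a functor that is injective on objects and has a functorial left inverse induces isomorphisms on automorphism groups. Since the excerpt already promises an explicit, polynomial-time, purely graph-theoretic reconstruction, verifying this is routine rather than substantive, but it should be stated explicitly.
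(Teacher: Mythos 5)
Your proof is correct and takes essentially the same route as the paper: the forward inclusion is immediate from the definitions, and the reverse inclusion transports a graph automorphism back to an order automorphism via the label-invariant reconstruction of Theorem 1. Your write-up is in fact more careful than the paper's two-line argument, notably in making the equivariance of the reconstruction rule explicit and in verifying (via the common-lower-bound characterization of non-empty intersection) that order automorphisms preserve the edge relation of $\psi(G)$.
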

\begin{proof}
If $T$ is an automorphism of $G$, then it produces an automorphism both on 
$\phi(G)$ and $\psi(G)$. On the other hand, if we have an automorphism of 
a graph then the reconstruction allows to transport this automorphism to $G$:
$T$ commutes the vertices which produces a permutation of the vertex set $G_0$
of $G$. This define uniquely the permutation on $G$. 
\end{proof}

\paragraph{}
In the case when $G$ is a Barycentric refinement, then also the Lefschetz number $L(T,G)$ 
(the super trace of the induced map on the cohomology groups) is the same than the Lefschetz number 
$L(T,\phi(G))$ or $L(T,\psi(G))$ and if it is not zero, like for a contractible complex, there is at
least one fixed point, because of the Lefschetz fixed point theorem $L(T,G) = \sum_{x, T(x)=x} i_T(x)$
where $i_T(x) = (-1)^{{\rm dim}(x)} {\rm sign}(T:x \to x)$ is the index of the fixed point $x$
which in the case of a graph is a fixed clique. See \cite{brouwergraph}. We will also see

\begin{coro}
The Cartesian product $G_1 \times G_2$, the Stanley-Reisner product $\phi(G_1 \times G_2) = \phi(G_1)  \cdot  \phi(G_2)$ 
as well as the strong product $\psi(G_1) \cdot \psi(G_2)$ have the same automorphism group which is the 
product group of the automorphism groups of $G_1$ and $G_2$. 
\end{coro}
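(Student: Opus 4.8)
The plan is to use Corollary 1 to collapse the three objects into one, and then to compute that common group. Since $G_1\times G_2$ is again a simplicial complex --- or, if one factor is disconnected, a member of the ring of complexes, where $\times$ distributes over disjoint union so that one may reduce to connected factors --- Corollary 1 applies to $G=G_1\times G_2$ and gives $\mathrm{Aut}(G_1\times G_2)=\mathrm{Aut}(\phi(G_1\times G_2))=\mathrm{Aut}(\psi(G_1\times G_2))$. Together with the stated identities $\phi(G_1\times G_2)=\phi(G_1)\cdot\phi(G_2)$ and $\psi(G_1\times G_2)=\psi(G_1)\cdot\psi(G_2)$, this already shows that the Cartesian product of complexes, the Stanley-Reisner product of Barycentric graphs and the strong product of connection graphs all carry the same automorphism group. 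Everything thus reduces to the single assertion $\mathrm{Aut}(G_1\times G_2)\cong\mathrm{Aut}(G_1)\times\mathrm{Aut}(G_2)$.

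\paragraph{}
One inclusion is immediate: if $T_i\in\mathrm{Aut}(G_i)$, then $(x,y)\mapsto(T_1x,T_2y)$ is a bijection of $G_1\times G_2$ preserving componentwise inclusion, hence an automorphism, and $(T_1,T_2)\mapsto T_1\times T_2$ is an injective homomorphism. For the converse, I would first note that an automorphism $T$ of $G_1\times G_2$ is determined by its action on vertices: because $\dim(x,y)=\dim(x)+\dim(y)$, the zero-dimensional elements are exactly the pairs $V(G_1)\times V(G_2)$, and every element $(x,y)$ is the join in the product poset of the vertices it contains, so $T$ is recovered from the induced permutation $\bar T$ of $V(G_1)\times V(G_2)$ --- the same reconstruction principle behind Corollary 1. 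A ``slice'' argument then finishes: if $\bar T=\sigma\times\tau$ splits as a product of a permutation $\sigma$ of $V(G_1)$ and $\tau$ of $V(G_2)$, then $T$ carries each subposet $G_1\times\{w\}$ (a copy of $G_1$) onto $G_1\times\{\tau w\}$, which forces $\sigma(x)\in G_1$ for all $x\in G_1$, so $\sigma\in\mathrm{Aut}(G_1)$ and symmetrically $\tau\in\mathrm{Aut}(G_2)$, whence $T=\sigma\times\tau\in\mathrm{Aut}(G_1)\times\mathrm{Aut}(G_2)$. The whole difficulty is concentrated in showing that $\bar T$ must split.

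\paragraph{}
The naive attempt is to pass to the one-skeleton: the edge-elements of $G_1\times G_2$ are the pairs (edge, vertex) and (vertex, edge), so $\mathrm{Sk}_1(G_1\times G_2)$ is exactly the Cartesian graph product $\mathrm{Sk}_1(G_1)\,\square\,\mathrm{Sk}_1(G_2)$, to which the classical Sabidussi--Vizing theorem (unique prime factorization of connected graphs under $\square$ and the description of the automorphism group of such a product) would apply. This is too lossy, however: a complex can be $\times$-prime while its one-skeleton is not $\square$-prime --- for instance the $4$-cycle as a $1$-dimensional complex, whose skeleton is $K_2\,\square\,K_2$ but which is not the filled square $K_2\times K_2$ --- so $\square$-coprimality of the skeleta is strictly stronger than $\times$-coprimality of the complexes. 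One therefore has to work with the full poset $G_1\times G_2$, equivalently with the Barycentric refinement graph $\phi(G_1\times G_2)=\phi(G_1)\cdot\phi(G_2)$, whose Whitney complex is a genuine flag complex.

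\paragraph{}
The step I expect to be the real content of the proof is then the unique-factorization input: in the ring of simplicial complexes, every complex factors uniquely into connected prime complexes under $\times$, and the automorphism group of a product of connected, \emph{pairwise non-isomorphic} primes is the product of the factor automorphism groups --- the complex-level analogue of Sabidussi's theorem. Granting this, the product structure of $G_1\times G_2$ is recorded by an intrinsic colouring of its covering relations into ``factor classes'' (the analogue of the Djokovi\'c--Winkler relation on a Cartesian graph product) whose fibres $\bar T$ must permute; when $G_1$ and $G_2$ have no common prime factor, $\bar T$ cannot carry a $G_1$-fibre to a $G_2$-fibre and is forced to split as $\sigma\times\tau$, closing the argument. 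This is also exactly where relative-primeness of $G_1$ and $G_2$ is used, and it is genuinely needed: for $G_1=G_2=K_2$ the product $G_1\times G_2$ is the filled square, whose automorphism group is the dihedral group $D_4$ of order $8$, strictly larger than $\mathrm{Aut}(G_1)\times\mathrm{Aut}(G_2)=\mathbb{Z}_2\times\mathbb{Z}_2$. So the clean statement should be read with the factors relatively prime --- which covers the intended applications where the $G_i$ are distinct --- the general case producing the expected wreath-type correction for repeated prime factors.
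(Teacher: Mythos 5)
The paper never actually proves this corollary --- it is announced with ``We will also see'' and no argument follows anywhere in the text --- so there is no proof of record to compare yours against. Your reduction of the three automorphism groups to the single group $\mathrm{Aut}(G_1\times G_2)$ via the reconstruction principle of Theorem~1 is sound (with the small caveat that Corollary~1 is stated for simplicial complexes and $G_1\times G_2$ is not one, so you should check that the degree-minimum identification of the $0$-dimensional elements still works in the strong product $\psi(G_1)\cdot\psi(G_2)$; it does, since a neighbor of a pair of points must contain both coordinates and hence has strictly larger degree). The easy inclusion $\mathrm{Aut}(G_1)\times\mathrm{Aut}(G_2)\hookrightarrow\mathrm{Aut}(G_1\times G_2)$ is fine. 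Most importantly, your counterexample is correct and exposes a real error in the statement as printed: for $G_1=G_2=K_2$ the product poset is the face poset of a solid square (four vertices, four edges, one top cell), whose automorphism group is $D_4$ of order $8$, whereas $\mathrm{Aut}(K_2)\times\mathrm{Aut}(K_2)$ has order $4$; the factor-swap $(x,y)\mapsto(\iota^{-1}y,\iota x)$ is an automorphism of $P\times P$ whenever the two factors are isomorphic. So the corollary needs a hypothesis --- the factors should have no common prime in the $\times$-monoid (or at least be non-isomorphic and prime) --- and in general one gets a wreath-product correction, exactly as for Sabidussi's theorem on Cartesian graph products.

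That said, your proposal does not prove the corrected statement either: the entire converse inclusion rests on an unproved ``Sabidussi--Vizing theorem for simplicial complexes'' (unique prime factorization under $\times$ and the splitting of $\bar T$ along an intrinsic Djokovi\'c--Winkler-type relation on covering pairs). You correctly observe that passing to one-skeleta is too lossy, so the classical graph theorem cannot simply be quoted, and you give no argument for the poset-level analogue. If you want a complete proof, this is the step that must be supplied --- for instance by showing that the covering relations of $G_1\times G_2$ carry a canonical two-coloring (a covering $(x,y)\lessdot(x',y)$ versus $(x,y)\lessdot(x,y')$) that any automorphism must preserve or swap globally, and that swapping is impossible when $G_1\not\cong G_2$ restricted to corresponding prime factors. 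As it stands, your text is a correct refutation of the printed statement together with a plausible but incomplete sketch of its repair.
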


\begin{figure}[!htpb]
\scalebox{1.0}{\includegraphics{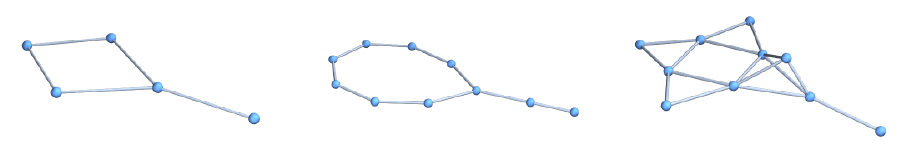}}
\label{Figure 1}
\caption{
The complex $G=\{ (1,2),(2,3),(3,4),(4,1),(4,5),(1),(2),(3),(4),(5) \}$, its
Barycentric refinement graph $\phi(G)$ and the connection graph $\psi(G)$.
The first picture visualizes $G$ also as a graph as $G$ is the Whitney complex
of that graph. We know  by Theorem 5 that
$\Theta(\psi(G))=\Theta(\phi(G))=5$. 
}
\end{figure}

\paragraph{}
Theorem 1 shows that one does not lose information by looking at connection graphs
of simplicial complexes. Finite abstract simplicial complexes have only one axiom and 
still allow to explore interesting topology, like for example the topology of compact 
differentiable manifolds. The simple set-up for finite abstract simplicial 
complexes is even simpler than Euclid's axiom system about points and lines.
When thinking about sets of sets, it is helpful to look at incidence and intersection 
graphs because sets by themselves evoke little geometric intuition. The graphs
$\phi(G)$ and $\psi(G)$ help for that and build a link to the actual topology. 
The graph $\phi(G)$ for example is all we need to get all the cohomology groups which 
are kernels of block matrices of the Hodge Laplacian $(d+d^*)^2$ invoking the incidence
structure. The intersection structure is spectrally natural: because the
product for connection graphs produces spectral data which multiply. 

\paragraph{}
We will discuss in a moment the topology and homotopy of $\phi(G)$ and $\psi(G)$. For now, note that the
graph $\psi(G)$ is topologically quite different from $G$ already for $1$-dimensional simplicial complexes.
While the functor $\phi$ from simplicial complexes to graphs does honor the maximal dimension, 
(the dimension of the maximal complete subgraph), 
the functor $\psi$ does not, as simple examples show. For example, if $x$ is a set
in $G$ which intersects with $n$ other sets, then there are $n+1$ sets which all 
intersect with each other, so that the graph $\phi(G)$ has a clique of dimension $n$.
For the $1$-dimensional {\bf star complex} $G=\{ (1,2),(1,3),\dots,(1,n) \}$ for example, 
the connection graph $\psi(G)$ already has maximal dimension $n$. The next theorem will
show however that $\psi(G)$ is graph homotopic to $G$, implying that the geometric realizations
of $\psi(G)$ and $G$ are classically homotopic.

\paragraph{}
This note justifies partly some statements related to simplicial complexes and graphs
\cite{KnillBaltimore,KnillILAS,AmazingWorld}. It is a small brick in a larger building hopefully
to emerge at some point. We also hope to pitch the simplest homotopy set-up 
in mathematics and illustrate with some lemmas how to work effectively with it (an Appendix gives an other example).
We will see in particular that important deformation procedures for graphs are homotopies: examples are
{\bf edge refinements}, which serve as local Barycentric refinements, as well as global Barycentric refinements.
These two deformations even preserve the manifold structure of graphs in the sense that unit spheres remain
spheres and keep the dimension during the deformation. Homotopies 
(like $K_n \to K_{n+1}$) of course do not preserve dimension in general.  
We add in an appendix more about discrete spheres. 

\paragraph{}
For us, it is important that we can make sense of {\bf Cartesian products} of simplicial complexes 
without having to dive into other discrete combinatorial notions like {\bf simplicial sets}
or {\bf discrete CW complexes} (this has been used 
in \cite{KnillEnergy2020}) which are both combinatorial 
useful and categorically natural but also require more 
mathematical sophistication. We don't want to use 
the geometric realization functor to prove things in combinatorics. 

\section{Theorem 2}

\paragraph{}
If $x$ is a vertex of a finite simple graph $B=(V,E)$, let $S(x)$ denote the {\bf unit sphere} of $x$.
It is the graph induced from the set of all the vertices connected to $x$. The class of {\bf contractible graphs} is
defined recursively: the $1$-point graph $K_1=\{ \{1\},\{\} \}$ is contractible.
A graph $(V,E)$ is called {\bf contractible}, 
if there exists $v \in V$ such that the unit sphere $S(v)$, (the graph induced by the neighbors of $v$),
as well as the graph $B-v$, (the graph induced by $V \setminus \{v\}$), are both contractible. 
Since both $S(v)$ and $B-v$ have less vertices, the inductive definition works and allows to check 
contractibility in polynomial time with respect to the number of vertices in the graph. 

\paragraph{}
A {\bf homotopy step} is the process of removing a vertex $v$ for which $S(v)$ is contractible
or the reverse procedure of choosing a contractible subgraph $A$ of $B$ and connecting every 
vertex in $A$ to a new vertex $v$. Two graphs $A,B$ are called {\bf homotopic}, if one can chose a finite set of
homotopy steps to get from $A$ to $B$. This homotopy emerged from \cite{Ivashchenko1993,I94a,CYY}, is based on 
\cite{WhiteheadI} and already appears in \cite{Graham1979} according to \cite{CYY}. It is simple and fully
equivalent to the continuum homotopy and defines also a homotopy for simplicial complexes: two complexes $G,H$ 
are declared to be homotopic if $\phi(G)$ and $\phi(H)$ are homotopic graphs. 

\paragraph{}
An different homotopy was suggested in \cite{BBLL}; it is based on product graphs, closer to the continuum but
harder to implement. We have used the above mentioned homotopy since \cite{josellisknill,KnillTopology}. See
\cite{KnillBaltimore,KnillILAS,AmazingWorld} for presentations when discussing
coloring problems \cite{knillgraphcoloring,knillgraphcoloring2,knillgraphcoloring3}. In the appendix, we
illustrate how homotopy defines spheres and allows to prove properties of spheres like the Euler Gem formula. 
The Appendix was a talk and gives all definitions of the two classes ``spheres" and ``contractible graphs".
The Ljusternik-Schnirelman point of view is that contractible spaces have category 1, and spheres have
category 2 with the exception of the $(-1)$-sphere, the empty graph which has L-S category 0. 

\paragraph{}
Contractible graphs by definition are homotopic
to $1$ (the one-point graph $K_1$ with only one vertex and no edges) 
but the graphs like the {\bf dunce hat} are homotopic to $1$ but not contractible. It is necessary
first to thicken up a graph in general before it can be contracted. The class of graphs 
which are homotopic to $1$ form a much larger class of graphs than the set of contractible graphs
and are inaccessible in the sense that it is a computational hard problem to decide whether
a graph is homotopic to $1$ or not. Contractibility on the other hand is decidable:
as we only need to go through all vertices and check whether their unit spheres are contractible
and because unit spheres have less vertices. 

\begin{thm}
For $G$ Barycentric, $\phi(G)$ and $\psi(G)$ are homotopic. 
\end{thm}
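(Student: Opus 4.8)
The plan is to interpolate between the two graphs. They have the same vertex set, namely the faces of $G$, and $\psi(G)$ is obtained from $\phi(G)$ by adding the \emph{connection edges}: the pairs $\{x,y\}$ with $x\cap y\neq\emptyset$ but with $x,y$ incomparable. I would add these extra edges back to $\phi(G)$ one at a time, using the basic local move of the homotopy calculus of this section: attaching an edge $\{a,b\}$ to a graph $B$ is a homotopy as soon as the graph induced on the common neighbourhood $S(a)\cap S(b)$ of its endpoints (computed in $B$) is contractible — equivalently, as soon as $\{a,b\}\cup(S(a)\cap S(b))$ is contractible. This is the ``attach a $1$-simplex with contractible link'' move, which is part of the Ivashchenko-type calculus used to define homotopy and is itself a local form of Barycentric refinement. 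So it suffices to find an ordering $e_1,\dots,e_m$ of the connection edges such that, setting $\Gamma_0=\phi(G)$ and $\Gamma_i=\Gamma_{i-1}\cup\{e_i\}$, the common neighbourhood of the endpoints of $e_i$ inside the intermediate graph $\Gamma_{i-1}$ is contractible at each step.

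Here is where the Barycentric hypothesis is essential, and it is the heart of the matter. Write $G=B(K)$, so each face of $G$ is a chain (flag) of faces of $K$. For a connection edge $\{f,g\}$ the intersection $z=f\cap g$ is nonempty, is again a chain, and hence is itself a face of $G$ with $z\subsetneq f$ and $z\subsetneq g$; more generally, if two faces of $G$ meet, the chain given by their intersection is a face witnessing the intersection. One then shows that in the final graph $\psi(G)$ the common neighbourhood of $f$ and $g$ is contractible — it collapses onto a canonical subface built from the flag structure — and that if the connection edges are processed in a suitable order, for instance one in which $\dim(f)+\dim(g)$ is non-increasing (a linear extension of the order in which $\{f,g\}$ comes before $\{f',g'\}$ whenever $f\supseteq f'$ and $g\supseteq g'$), then the edges already installed are exactly the ones needed to keep the common neighbourhood contractible along the way. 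The contrast with non-Barycentric complexes is instructive: for the hollow triangle $C_3$ the three one-dimensional faces $ab,bc,ca$ pairwise intersect yet no face is contained in all three, so the connection edge completing that triangle has a common neighbourhood consisting of two non-adjacent vertices, and the move is illegal; this is why $\psi(C_3)$ (contractible) is not homotopic to $\phi(C_3)$ (a circle), and why the hypothesis cannot be dropped.

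The step I expect to be the main obstacle is precisely the second half of that paragraph: committing to an ordering and proving that the common neighbourhood of the current pair is contractible inside the partially rebuilt graph $\Gamma_{i-1}$, not merely inside $\psi(G)$ where it is easier to see. A cleaner-looking, essentially equivalent alternative is a mutual reduction. On one side, $\phi(G)=\phi(B(K))$ is the Barycentric refinement of the graph $\phi(K)$ and so is homotopic to $\phi(K)$ by the Barycentric invariance of homotopy already invoked in the paper. On the other side, in $\psi(G)=\psi(B(K))$ every rank-one face $\{x\}$ of $G$ has unit sphere equal to the set of flags properly containing $x$; any two such flags share $x$, so this unit sphere is a clique, hence contractible, and the rank-one faces form an independent set of $\psi(G)$, so one may delete them all without the deletions interfering. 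One is then left with the induced graph on the flags of $K$ of rank $\ge 2$, which should be identified, up to homotopy, with a Barycentric refinement of $\phi(K)$; granting that, both $\psi(G)$ and $\phi(G)$ are homotopic to $\phi(K)$ and hence to each other. The obstacle in this second route is making that middle identification precise. In either route the essential content is the same: a flag complex arising as a Barycentric refinement supplies, for every pair of intersecting faces, a face witnessing their intersection, and this witness is what contracts away the extra cells introduced by passing from $\phi(G)$ to the connection graph $\psi(G)$.
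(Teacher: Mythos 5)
Your first route is the paper's own argument read in the opposite direction: the paper starts from $\psi(G)$ and \emph{deletes} the connection edges one at a time, rather than adding them to $\phi(G)$. The gap is that the move you rely on --- toggling an edge $\{a,b\}$ whenever the induced graph on $S(a)\cap S(b)$ is contractible --- is not a primitive step of the homotopy calculus (whose only moves add or remove a \emph{vertex} with contractible sphere), and you assume it rather than derive it. The paper spends two lemmas on exactly this: Lemma B shows that an edge refinement (a new vertex $e$ joined to $a$, $b$ and $S(a)\cap S(b)$) is a homotopy because the new vertex is attached along a cone, and Lemma A shows that an edge $(v,w)$ may be dropped whenever $S(v)$ and $S(v)-w$ are both contractible, by deleting $v$ and re-attaching it to the smaller neighbourhood. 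Chaining refine--delete--collapse removes one connection edge; you would need to supply this derivation or cite it.

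The second and more serious gap is the step you yourself flag as ``one then shows'': the contractibility of $S(y)\cap S(z)$ for incomparable intersecting $y,z$ in a Barycentric $G$. This is the paper's Lemma C and is the entire content of the theorem; the paper proves it by showing that either $S(y)\cap S(z)$ consists only of faces of the chain $y\cap z$ (hence is a simplex), or it contains a vertex adjacent to every other vertex of $S(y)\cap S(z)$ --- a cone point supplied by a face of the refinement witnessing the relevant intersections --- and a graph with a dominating vertex is contractible. Your phrase ``collapses onto a canonical subface built from the flag structure'' names this fact but does not prove it, and your $C_3$ example correctly shows it fails without the Barycentric hypothesis. Your concern about the intermediate graphs $\Gamma_{i-1}$ is legitimate (the induced graph on $S(y)\cap S(z)$ changes as connection edges inside it are toggled), and neither your sketch nor, strictly speaking, the paper's write-up fully discharges it; but as submitted your proposal leaves both the edge-toggle move and the key contractibility lemma announced rather than established, so it is a correct plan rather than a proof. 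The alternative route through $\phi(K)$ is not the one the paper takes, and you concede its middle identification is missing.
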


\begin{figure}[!htpb]
\scalebox{1.0}{\includegraphics{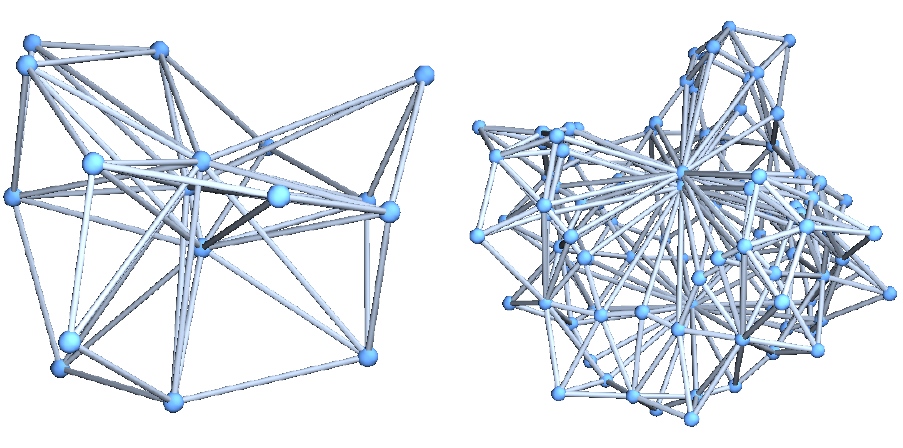}}
\label{Figure 8}
\caption{
The dunce hat is a graph with 17 vertices and 52 edges and 36 triangles,
which is homotopic to $1$ but not contractible. 
All its Barycentric refinement are homotopic to $1$ but
not contractible. 
}
\end{figure}

\paragraph{}
The 1-dimensional complex 
$$  G=\{(1,2),(2,3),(3,1),(1),(2),(3)\} $$
is the boundary complex of a triangle $K_3$ and topologically a $1$-sphere (circle).
The graphs $\phi(G)=C_6$ and $\psi(G)$ are not
homotopic because $\chi(\phi(G))=0$ and $\chi(\psi(G))=1$ and Euler characteristic
$\chi$ is a homotopy invariant. 
For {\bf octahedron complex} which has six $0$-dimensional vertices, 
the graph $\phi(G)$ has the topology of a $2$-sphere while $\psi(G)$ is homotopic to a 
$3$-sphere. The proof of Theorem~2 is not difficult, once one sees how the homotopy steps are done. 
We will write down the concrete graph homotopy.

\begin{figure}[!htpb]
\scalebox{1.0}{\includegraphics{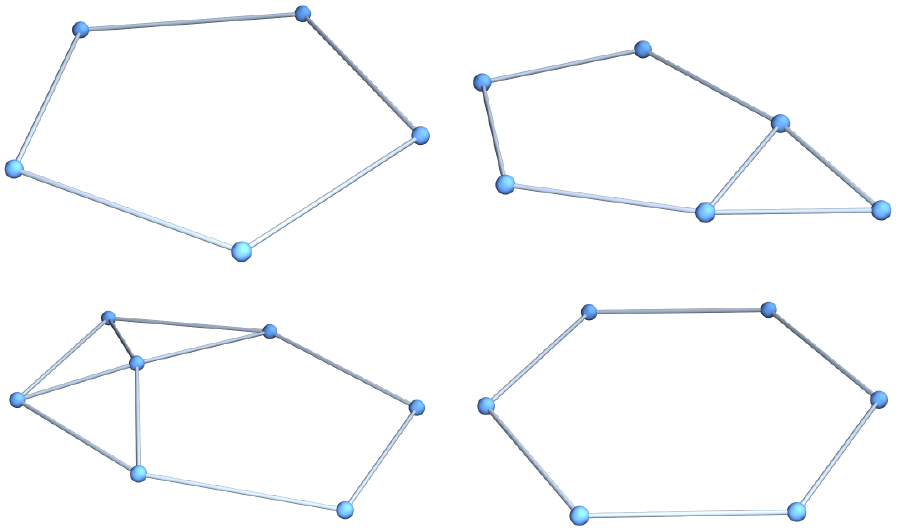}}
\label{Figure 4}
\caption{
The homotopy deformation from the circular graph $C_5$ to $C_6$ needs 
three homotopy steps. It is not possible to contract $C_6$
to $C_5$ as the unit spheres of all vertices are $0$-spheres and 
not contractible. The graph first needs to be thickened up at first and
temporarily becomes two-dimensional. 
}
\end{figure}

\paragraph{}
Homotopy preserves {\bf Euler characteristic} $\chi(A) = \sum_{x \in A} (-1)^{{\rm dim}(x)}$, 
summing over all complete subgraphs $x$ of $A$. That homotopy is an invariant follows directly from
$$ \chi(B +_A v) = \chi(B) + (1-\chi(A) $$ 
if $B +_A v$ is the graph in which a new vertex is attached to a subgraph $A$ of $B$. 
If $A$ is contractible, then $\chi(A)=1$ and $\chi(B)=\chi(B+v)$. 
This formula is a direct consequence of the {\bf valuation property}
$\chi(A \cup B) = \chi(A) + \chi(B) - \chi(A \cap B)$ for any subgraphs $A,B$ of
a larger graph. When applying it to a build up of the 
complex, it implies the {\bf Poincar\'e-Hopf formula} \cite{poincarehopf,MorePoincareHopf}
$$ \chi(A) = \sum_{v \in V(A)} i_f(x) $$
for a locally injective function $f$ on $V(\Gamma)$, where $i_f(x) = 1-\chi(S_f(x))$ is the 
{\bf Poincar\'e-Hopf index} of $f$ at $x$ and 
$S_f(x)$ is the subgraph generated by all $y \in S(x)$, where $f(y)<f(x)$. 

\begin{coro}
If $G$ is Barycentric then $\chi(G) = \chi(\psi(G)) = \chi(\phi(G))$. 
\end{coro}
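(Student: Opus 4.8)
The plan is to establish the two equalities separately, and to observe that the hypothesis that $G$ is Barycentric is used only for the first. For $\chi(\phi(G))=\chi(\psi(G))$ I would argue as follows: by Theorem~2 the graphs $\phi(G)$ and $\psi(G)$ are homotopic, so there is a finite chain of homotopy steps $B\mapsto B+_A v$ and their inverses linking them, where in each step $A$ is contractible. Since $\chi(B+_A v)=\chi(B)+1-\chi(A)$ and $\chi(A)=1$ for contractible $A$ (the latter proved by the same recursion that defines contractibility, by induction on the number of vertices), each homotopy step leaves $\chi$ unchanged, and hence $\chi(\phi(G))=\chi(\psi(G))$. This step does not yet pin down the common value; for that one must compare with $G$ itself.

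For $\chi(G)=\chi(\phi(G))$ the Barycentric assumption is no longer needed, and I would prove the statement $\chi(H)=\chi(\phi(H))$ for every finite abstract simplicial complex $H$. First recall that the Whitney complex of $\phi(H)$ is the Barycentric refinement $H_1$ of $H$, whose simplices are the flags $x_0\subsetneq\cdots\subsetneq x_k$ in $H$; thus $\chi(\phi(H))=\chi(H_1)$ by the very definition of $\chi$ for a graph, and the claim becomes the (classical) invariance of Euler characteristic under Barycentric refinement. I would prove this via the Poincar\'e--Hopf formula recalled above, applied to the graph $\phi(H)$ with the function $f(x)=\dim(x)$. This $f$ is locally injective, since two adjacent vertices of $\phi(H)$ are related by strict inclusion and so carry distinct dimensions. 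For a vertex $x$ with $d=\dim(x)$, the set $S_f(x)$ consists of the $y$ in the unit sphere of $x$ in $\phi(H)$ with $\dim(y)<d$, that is, of the $y$ with $\emptyset\neq y\subsetneq x$; since $H$ is a complex all such $y$ lie in $H$, and with the induced containment edges this subgraph is exactly $\phi(\partial x)$, where $\partial x$ is the boundary complex of all non-empty proper subsets of $x$. Hence $\chi(\phi(H))=\sum_{x\in H}\bigl(1-\chi(\phi(\partial x))\bigr)$.

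It remains to evaluate $\chi(\phi(\partial x))$. Since $\partial x$ has dimension $d-1\le\dim(H)-1$, strong induction on $\dim(H)$ yields $\chi(\phi(\partial x))=\chi((\partial x)_1)=\chi(\partial x)$, the base case $\dim(H)\le 0$ being trivial because then $\phi(H)$ and $H$ coincide. Finally $\chi(\partial x)=\chi(\overline{x})-(-1)^d=1-(-1)^d$, where $\overline{x}$ is the full simplex of all non-empty subsets of $x$, a cone with $\chi(\overline{x})=1$; therefore $1-\chi(\phi(\partial x))=1-\bigl(1-(-1)^d\bigr)=(-1)^{\dim x}$. Summing over $x$ gives $\chi(\phi(H))=\sum_{x\in H}(-1)^{\dim x}=\chi(H)$. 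Taking $H=G$ and combining with the first part gives $\chi(G)=\chi(\phi(G))=\chi(\psi(G))$.

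The only real obstacle is bookkeeping rather than anything deep: one must set up the induction so that every boundary complex $\partial x$ that occurs genuinely has smaller dimension than $H$ (it does, as $\dim(\partial x)=\dim(x)-1\le\dim(H)-1$), and one must correctly recognize $S_f(x)$ inside $\phi(H)$ as the refinement graph of the sphere $\partial x$ and not of $x$ itself. Everything else rests on the valuation property of $\chi$ already invoked in the text. It is worth noting that the first equality $\chi(\phi(G))=\chi(\psi(G))$ genuinely uses Theorem~2 and hence the Barycentric hypothesis: for the boundary complex of a triangle one has $\chi(\phi(G))=\chi(C_6)=0$ while $\chi(\psi(G))=1$, so the equality fails without the hypothesis.
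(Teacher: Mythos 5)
Your proposal is correct. For the equality $\chi(\phi(G))=\chi(\psi(G))$ you argue exactly as the paper does: Theorem~2 gives the homotopy, and the identity $\chi(B+_Av)=\chi(B)+1-\chi(A)$ together with $\chi(A)=1$ for contractible $A$ makes each homotopy step $\chi$-preserving; your counterexample $\chi(C_6)=0\neq 1=\chi(\psi(G))$ for the triangle boundary is the very example the paper uses to show the Barycentric hypothesis is needed here. Where you genuinely diverge is in the equality $\chi(G)=\chi(\phi(G))$, which the paper leaves implicit (it would follow from its later corollary that a graph and its Barycentric refinement are homotopic, proved via edge refinements, i.e.\ Lemma~B): you instead give a direct combinatorial proof for \emph{every} complex $H$ by applying Poincar\'e--Hopf to $\phi(H)$ with $f(x)=\dim(x)$, identifying $S_f(x)$ with $\phi(\partial x)$, and inducting on dimension to get $\chi(\phi(\partial x))=\chi(\partial x)=1-(-1)^{\dim x}$, hence index $(-1)^{\dim x}$ at each vertex. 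This is a clean and fully self-contained route; it buys you the statement $\chi(H)=\chi(H_1)$ for arbitrary complexes without invoking any homotopy machinery, at the cost of the dimension bookkeeping you acknowledge. The homotopy route the paper gestures at is shorter once Lemma~B is in place but proves less explicitly why the indices are $(-1)^{\dim x}$. Both arguments are sound; your version has the additional merit of making visible that the Barycentric hypothesis enters only through Theorem~2.
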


\paragraph{}
A consequence is that all cohomology 
groups and their dimensions, the {\bf Betti numbers} are invariant under homotopy deformations. 
This can be verified also within the discrete setup: extend any cocycle from 
$G$ to $G+_A x$ and also extend every coboundary from $G$ to $G+_A x$. Also discrete Hodge theory 
works: deform the kernel of the blocks $H_k(G)$ of $H(G)$ to the kernels of $H_k(G+_A x)$: 
just let the heat flow $e^{-t H}$ act on a function $f$ on $G$ that had been harmonic and 
initially was extended to $G+_A x$ by assigning
$0$ to every $k$-simplex in $G+_A x$ not in $G$. The heat flow will deform the function to a 
harmonic function on $G+_A x$. 

\begin{figure}[!htpb]
\scalebox{0.4}{\includegraphics{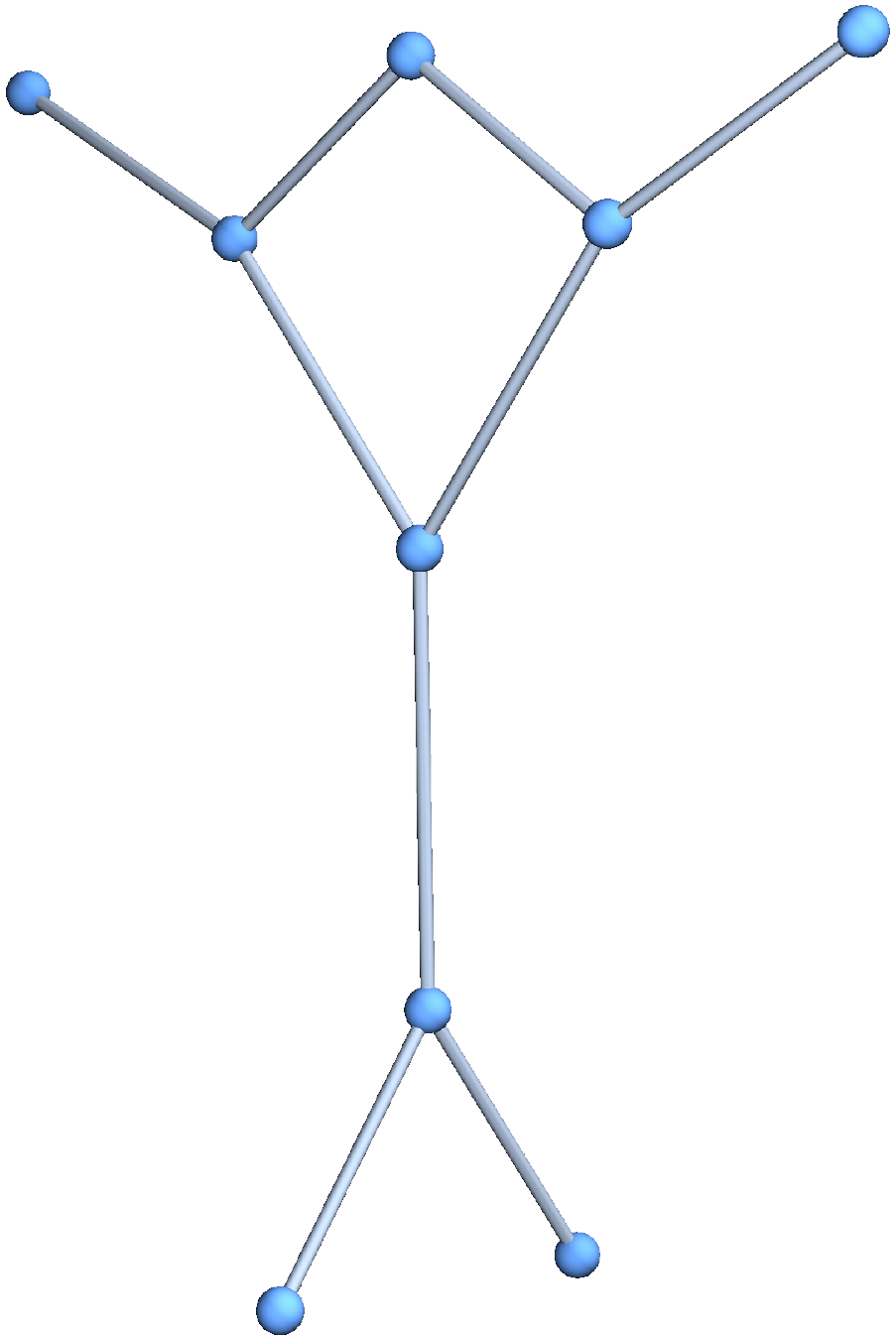}}
\scalebox{0.4}{\includegraphics{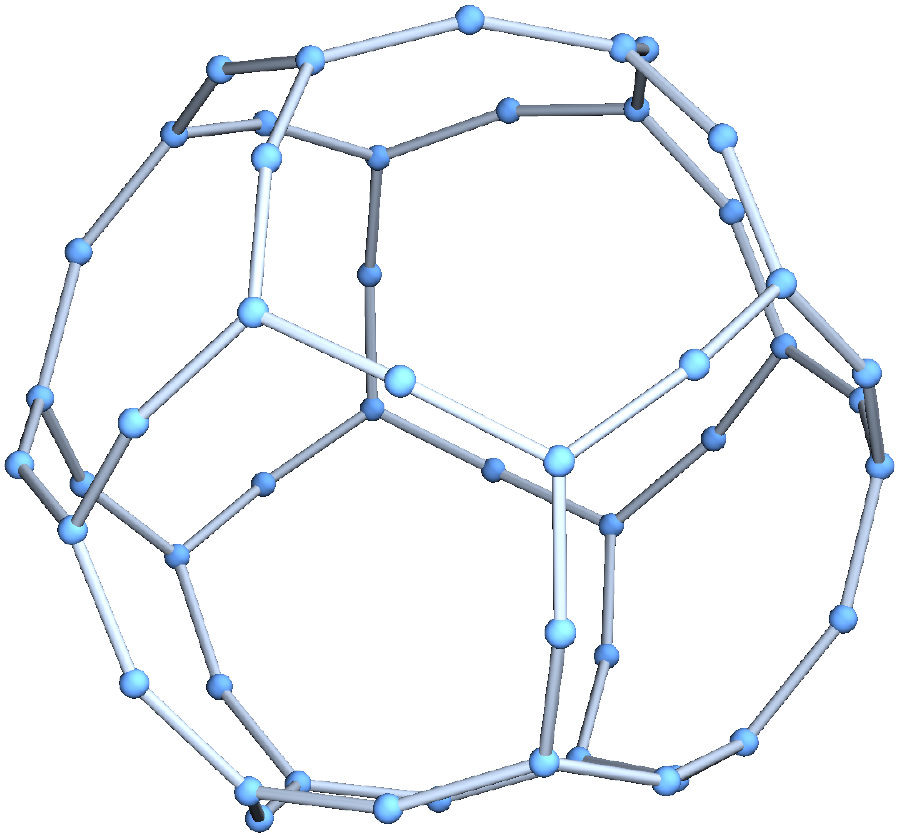}}
\label{Figure 9}
\caption{
For $1$-dimensional connected simplicial complexes (curves), the
Betti number $b_1$ is the genus, the number 
of holes completely determines the homotopy type. Alternatively $1-b_1=\chi(G)$, the Euler characteristic
characterizes connected ``curves". All trees are contractible, the graph in the
first picture has $b_1=1$ and is homotopic to a circle. The second $1$-dimensional complex has
the Betti numbers $(1,11)$ and Euler characteristic $1-11=-10$. We could fill the $12$ holes 
and get a $2$-sphere of Euler characteristic $2$. 
}
\end{figure}

\section{Theorem 3}

\paragraph{}
In this section, we leave combinatorics in order to illustrate the connection with
topology. Mathematicians like \cite{AlexandroffHopf} thought in terms of discrete graphs
(i.e. Eckpunktger\"ust) and this is still visible in modern algebraic topology \cite{Hatcher} and especially
texts which show in drawings how topologists think \cite{Fomenko}. 
The geometric Whitney realization of a graph $A$ is a union $\gamma(A)$ of simplices in some Euclidean
space $\mathbb{R}^n$ such that every complete subgraph in $A$ is mapped into a simplex. The simplest way to 
do that is to see the clique complex of $A$ as a subcomplex of the maximal simplex in the 
complete graph with vertices in $A$.

\begin{thm}[Theorem 3]
If two graphs $A$ and $B$ are homotopic, their geometric Whitney realizations
$\gamma(A),\gamma(B)$ are homotopic topological spaces.
\end{thm}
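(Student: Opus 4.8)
The plan is to reduce everything to a single \textbf{homotopy step} and then to recognise that step, after applying $\gamma$, as the attachment of a topological cone. Since homotopy equivalence of spaces is an equivalence relation, and since a homotopy step and its reverse merely exchange the roles of the two graphs, it suffices to prove the following: if $v$ is a vertex of a graph $B$ whose unit sphere $S(v)$ is contractible, then $\gamma(B-v)$ is a strong deformation retract of $\gamma(B)$. The combinatorial input is that the cliques of $B$ containing $v$ are exactly the sets $\{v\}\cup\sigma$ with $\sigma$ a clique of $S(v)$ (allowing $\sigma=\emptyset$); hence, passing to geometric realisations of clique complexes,
$$ \gamma(B) \;=\; \gamma(B-v)\;\cup_{\gamma(S(v))}\; C_v\bigl(\gamma(S(v))\bigr), $$
where $C_v(Z)$ denotes the topological cone on $Z$ with apex the new point $v$. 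In other words $\gamma(B)$ is the mapping cone of the inclusion $\gamma(S(v))\hookrightarrow\gamma(B-v)$.

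Next I would isolate the topological fact that does the work: if $Z$ is a contractible subcomplex of a CW complex $X$, then $X$ is a strong deformation retract of $X\cup_Z C Z$. The proof: the inclusion $Z\hookrightarrow CZ$ is a cofibration (a CW pair) and, because $Z$ is contractible, also a homotopy equivalence; a cofibration which is a homotopy equivalence is the inclusion of a strong deformation retract, so $CZ$ deformation retracts onto $Z$ while fixing $Z$ pointwise. Extending that homotopy by the identity on $X$ --- the two descriptions agree on the overlap $Z$, and both fix $Z$ --- gives a strong deformation retraction of $X\cup_Z CZ$ onto $X$. Applied with $X=\gamma(B-v)$ and $Z=\gamma(S(v))$, this yields the desired retraction, provided we know that $\gamma(S(v))$ is contractible as a topological space.

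That remaining point I would prove by induction on the recursive definition of a contractible graph, using the very same lemma. The base case is $\gamma(K_1)=\{\mathrm{pt}\}$. For the inductive step, let $H$ be contractible with a vertex $w$ for which $S(w)$ and $H-w$ are contractible; both have fewer vertices, so by induction $\gamma(S(w))$ and $\gamma(H-w)$ are contractible spaces, and by the clique description above $\gamma(H)=\gamma(H-w)\cup_{\gamma(S(w))}C_w(\gamma(S(w)))$, so the lemma deformation retracts $\gamma(H)$ onto $\gamma(H-w)$, which is contractible; hence $\gamma(H)$ is contractible. With this in hand, each homotopy step along a chain from $A$ to $B$ produces a homotopy equivalence between the corresponding realisations, and composing them shows that $\gamma(A)$ and $\gamma(B)$ are homotopic topological spaces.

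I expect the main obstacle to be careful bookkeeping rather than any deep difficulty. One must check that the geometric realisation functor on clique complexes genuinely turns ``attach a vertex to a contractible (induced) subgraph'' into the topological cone attachment written above, paying attention to the role of the empty clique and to induced subgraphs, and one must be willing to invoke the standard homotopy-theoretic facts that simplicial (CW) pairs are cofibrations and that a cofibration which is a homotopy equivalence is the inclusion of a strong deformation retract. One should also take care to carry out the induction ``discretely contractible $\Rightarrow$ topologically contractible'' before it is used in the main argument, so as to keep the reasoning free of circularity.
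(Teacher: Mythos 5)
Your proposal is correct, and at the top level it follows the same strategy as the paper: reduce to a single homotopy step and recognize that, under $\gamma$, attaching a vertex $v$ to a contractible subgraph $C$ amounts to gluing the cone $C_v(\gamma(C))$ onto the existing realization along $\gamma(C)$. Where you genuinely diverge is in how the resulting homotopy equivalence is certified. The paper places the new vertex at height $1$ in $\mathbb{R}^{n+1}$, builds the pyramid over $\gamma(C)$, and declares the vertical squashing $F_t(x,h)=(x,th)$ to be the deformation; this is concrete but glosses over two points: the squashed cone only lands inside the old realization if $\gamma(C)$ is star-shaped with respect to the projection of the apex (not automatic for a contractible graph), and the contractibility of $C$ --- which is genuinely needed, since gluing a cone over a non-contractible subspace changes the homotopy type --- is never explicitly invoked, nor does the paper show that a discretely contractible graph has a contractible realization. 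Your route repairs both: the cofibration argument ($Z\hookrightarrow CZ$ is a CW cofibration and, for contractible $Z$, a homotopy equivalence, hence the inclusion of a strong deformation retract, which then extends by the identity over $X$) makes the equivalence independent of any choice of embedding, and your separate induction establishing that $\gamma(H)$ is contractible for every contractible graph $H$ supplies exactly the hypothesis the cone lemma needs, without circularity. What you give up is the explicit, picture-friendly deformation; what you gain is an argument with no unstated geometric assumptions.
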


\paragraph{}
Because contractible and collapsible are are used differently in the literature and are easily confused, we
use contractible and collapsible as a synonym and use {\bf ``homotopic to $1$"} if a graph is
homotopic to $1$. The equivalence relation ``homotopic to $1$" is much harder to check than
being contractible. While we can by brute
force decide in a finite number of steps whether a graph is contractible (just go through all the vertices and see whether
one can remove it by checking its unit sphere to be contractible), the difficulty with the wider homotopy
is that we possibly have to expand the graph first considerably before we can contract.

\paragraph{}
It is already for $2$-dimensional complexes known to be undecidable in the sense that there is no Turing
machine which takes as an input a finite simple graph and as an output the decision whether it is
homotopic to $1$ or not. This work has started with Max Dehn and relates to other problems like
the triviality of the fundamental group which can be related to word problems in groups. 
Already for $2$-dimensional simplicial complexes, the problem to decide simply connectedness is 
algorithmically unsolvable. Lets abbreviate ``Barycentric $G$" for ``Barycentric refined $G$``.

\begin{coro}
If $G$ is Barycentric, then $\gamma(G),\gamma(\phi(G))$ and $\gamma(\psi(G))$ are all 
homotopic topological spaces.
\end{coro}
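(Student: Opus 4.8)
The plan is to assemble the statement from Theorem~2, Theorem~3, and one classical fact of piecewise-linear topology, with transitivity doing the gluing. Write $\simeq$ for ``homotopy equivalent''. The claim splits into two independent comparisons: first $\gamma(G)\simeq\gamma(\phi(G))$, which in fact holds for \emph{every} complex $G$ and needs no Barycentric hypothesis; and second $\gamma(\phi(G))\simeq\gamma(\psi(G))$, which is where the Barycentric assumption enters. Chaining the two then finishes the proof.

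For the first comparison I would unwind the definition of $\phi$. A clique in the graph $\phi(G)$ is a set of elements of $G$ that is totally ordered by inclusion, i.e.\ a flag (chain) in the face poset of $G$. Hence the Whitney (clique) complex of $\phi(G)$ is exactly the order complex of that poset, which is the \textbf{Barycentric refinement} $G_1$ of $G$; thus $\gamma(\phi(G))=|G_1|$. The classical fact that the geometric realization of a simplicial complex is homeomorphic to the geometric realization of its Barycentric subdivision gives $|G_1|\cong|G|=\gamma(G)$, and in particular $\gamma(G)\simeq\gamma(\phi(G))$.

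For the second comparison I would invoke Theorem~2: since $G$ is Barycentric, the graphs $\phi(G)$ and $\psi(G)$ are homotopic in the discrete sense. Theorem~3 then applies verbatim to this pair and yields that the geometric Whitney realizations $\gamma(\phi(G))$ and $\gamma(\psi(G))$ are homotopic topological spaces. Combining the two steps, $\gamma(G)\cong\gamma(\phi(G))\simeq\gamma(\psi(G))$, so the three spaces lie in one homotopy class.

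The only point where we leave the purely combinatorial framework is the appeal to ``Barycentric subdivision preserves the homeomorphism type'', and this is the step I expect to be the main (indeed essentially the only) obstacle --- not because it is difficult, it is a textbook result, but because one must pin down precisely what $\gamma$ means on a complex as opposed to on a graph and verify that the two usages are compatible through the identification of the Whitney complex of $\phi(G)$ with $G_1$. If one wished to avoid classical topology altogether, the alternative is to observe that $G$ and $\phi(G)$ are themselves graph-homotopic and push everything through Theorem~3, but one still owes the same compatibility remark. Either way the Barycentric hypothesis is used exactly once, in the call to Theorem~2, consistent with the remark in the excerpt that it is necessary there.
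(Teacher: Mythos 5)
Your proposal is correct and follows exactly the route the paper intends: the corollary is derived by combining Theorem~2 ($\phi(G)$ and $\psi(G)$ are graph-homotopic for Barycentric $G$) with Theorem~3 (graph homotopy induces homotopy equivalence of Whitney realizations), together with the identification of the clique complex of $\phi(G)$ with the Barycentric subdivision of $G$ so that $\gamma(G)\cong\gamma(\phi(G))$. Your added care about where the Barycentric hypothesis enters (only in the call to Theorem~2) and about the compatibility of the two usages of $\gamma$ is consistent with, and slightly more explicit than, the paper's treatment.
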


\section{Theorem 4}

\paragraph{}
The {\bf strong product} of two graphs $(V,E),(W,F)$ is the graph $(V \times W, Q)$, where
$Q=\{ ( (a,b), (c,d) ),   b=d \; {\rm or} \; (b,d) \in F$ and $a=c \; {\rm or} \; (a,c) \in E \}$. 
It is an associative operation which together with disjoint union $+$, (the monoid is group completed to an additive 
group), produces the {\bf strong ring} of graphs. It is a commutative ring with $1$-element  
$1=K_1$ and where $0$ is the empty graph. While the Cartesian product $G \times H$ of simplicial complexes
is not a simplicial complex, one has a product $\phi(G) \times \phi(H)$ on the Barycentric graph level
$(G \times H, \{ ( (a,b), (c,d) ),   a \subset c,$ and $b \subset d)$. One can compare this with 
$\psi(G)  \cdot  \psi(H) = (G \times H, ( (a,b),(c,d) ),   a \cap c \neq \emptyset, b \subset d \neq \emptyset \}$. 
The later graph $\psi(G)  \cdot  \psi(H)$ is spectrally nice in that the connection Laplacians 
$L(G \times H)$ is the tensor product of $L(G)$ and $L(H)$. (See \cite{KnillEnergy2020}).

\begin{thm}[Theorem 4]
If $A,A'$ are homotopic and $B,B'$ are homotopic, then $A \cdot B$ is homotopic to $A' \cdot B'$. 
\end{thm}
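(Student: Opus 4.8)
The plan is to reduce to a single homotopy step and then construct the homotopy explicitly. Since homotopy is the equivalence relation generated by homotopy steps and since the strong product is commutative and associative with $A \cdot B \cong B \cdot A$, it suffices to show: if $A'$ is obtained from $A$ by one homotopy step then $A \cdot B \simeq A' \cdot B$ (the statement for the second factor then follows by symmetry, and the two chain together). A homotopy step is the reverse of a homotopy step, so we may assume $A' = A - v$ with $S_A(v)$ contractible. The strong product is ``local'' in the sense that $(A \cdot B)[U \times W] = A[U] \cdot B[W]$ for induced subgraphs, so $(A-v) \cdot B = (A \cdot B) - (\{v\} \times V(B))$, and the task becomes: remove the slab $\{v\} \times V(B)$ from $A \cdot B$ by a finite sequence of homotopy steps.

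Two elementary facts drive the computation. First, the closed star $A[N_A[v]]$ is the cone over $S_A(v)$ with apex $v$, and the cone over any graph is contractible (straightforward induction on the number of vertices). Second, locality gives $S_{A \cdot B}((v,b)) = A[N_A[v]] \cdot B[N_B[b]] - (v,b)$. Now fix an injective $g \colon V(B) \to \mathbb{R}$ and remove the vertices $(v,b)$ in order of increasing $g(b)$. A direct calculation shows that when $(v,b)$ is removed --- after the vertices $\{v\} \times \{y : g(y) < g(b)\}$ have already gone --- its unit sphere in the current graph is the induced subgraph on
$$ \big(\{v\} \times S_B^+(b)\big) \ \cup\ \big(S_A(v) \times N_B[b]\big), \qquad S_B^+(b) = \{y \in S_B(b) : g(y) > g(b)\} . $$
The second piece is $S_A(v) \cdot B[N_B[b]]$, a product of the contractible graph $S_A(v)$ with the cone $B[N_B[b]]$; since $S_A(v)$ is contractible it is homotopic to $K_1$, and applying Theorem~4 itself to this strictly smaller pair of graphs (induction on the total number of vertices) yields $S_A(v) \cdot B[N_B[b]] \simeq K_1 \cdot B[N_B[b]] = B[N_B[b]]$, which is contractible. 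The first piece, $\{v\} \times S_B^+(b)$, is a copy of an induced subgraph of $B$, and one peels its vertices off one at a time by a secondary finite recursion: the unit sphere of such a vertex $(v,y_0)$ has the same shape $(\{v\} \times D) \cup (S_A(v) \times W)$ with $W$ a set on which $b$ is universal, so $B[W]$ is a cone, $S_A(v) \cdot B[W]$ is contractible by the same inductive appeal, and $D$ is a still-smaller copy of a subgraph of $B$. This recursion terminates because $D$ shrinks, so every unit sphere met during the slab removal is contractible, and $A \cdot B \simeq (A-v) \cdot B$ follows.

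The main obstacle is exactly this last point: keeping all the unit spheres contractible while the slab is being dismantled. It forces a layered induction --- a global induction on $|V(A)| + |V(B)|$ for Theorem~4 (which, via transitivity, reduces to the one-step case for smaller graphs and feeds the auxiliary claim ``$X$ contractible $\Rightarrow X \cdot Y \simeq Y$'' used above), wrapped around a bounded recursion on the size of the remaining copy of $B$ inside each unit sphere. One small case needs separate attention: if $v$ is universal in $A$ then $A$ is itself a cone and the closed star $A[N_A[v]]$ equals $A$, hence is not strictly smaller; but the contractible factor entering the unit-sphere estimates is $S_A(v) = A - v$, which is strictly smaller as a first factor, so the induction still closes. (If one is willing to pass through geometric realizations --- using that the graph homotopy matches the continuum one, via Theorem~3 and its converse, and that $\gamma$ respects products up to homotopy --- then $A \simeq A'$ and $B \simeq B'$ immediately give $\gamma(A \cdot B) \simeq \gamma(A' \cdot B')$; but since the point of this note is to keep such statements combinatorial, we prefer the explicit homotopy above.)
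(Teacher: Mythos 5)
Your reduction and bookkeeping are sound and, up to direction, this is the paper's own argument: the paper adds the new vertex $x$ of $A'=A+_Cx$ as $|V(B)|$ copies $(x,y)$ one at a time, attaching each to a set $U_k$ that is exactly the mirror image of the unit sphere you compute, and you remove the slab $\{v\}\times V(B)$ one vertex at a time in a chosen order. Your computation of the unit sphere of $(v,b)$ in the partially dismantled graph as the induced subgraph on $\bigl(\{v\}\times S_B^+(b)\bigr)\cup\bigl(S_A(v)\times N_B[b]\bigr)$ is correct, and so is the observation that everything reduces to certifying that these unit spheres are contractible.

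That certification is where there is a genuine gap. A homotopy step requires the unit sphere to be \emph{contractible} in the strict recursive sense (there is a vertex whose unit sphere and whose complement are both contractible), and your inductive appeal to Theorem~4 only delivers that $S_A(v)\cdot B[N_B[b]]$ is \emph{homotopic to} the contractible graph $B[N_B[b]]$. As the paper stresses with the dunce hat, being homotopic to $1$ does not imply contractibility, so "homotopic to a contractible graph, hence contractible" is not a valid inference, and the same defect recurs in your secondary recursion ("$S_A(v)\cdot B[W]$ is contractible by the same inductive appeal"). What your argument actually needs is a separate lemma proved by its own induction --- that the strong product of two contractible graphs (or at least of a contractible graph with a cone) is contractible --- and that lemma cannot be extracted from the homotopy-invariance statement you are trying to prove. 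To be fair, the paper's proof has the same pressure point: it asserts that each attachment set $U_k$ is contractible without argument. But your proposal does not close that hole either; it replaces an unproved contractibility claim with an appeal to a strictly weaker property. One smaller caution: your fallback via geometric realizations also does not work as stated, since Theorem~3 only gives that graph homotopy implies classical homotopy, and no converse is available in the paper.
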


\paragraph{}
It follows that the strong ring of graphs defines also a {\bf ring of homotopy classes} of 
signed graphs. The homotopy of a signed graph $A-B$ allows to deform $A$ and $B$. If $A,B$ are
homotopic then $A-B$ is $0$. The additive primes in the ring of the classes of connected graphs, 
the multiplicative primes are the homotopy classes graphs which come from finite abstract
simplicial complexes. The $1$-element in the ring is the class of all graphs which are 
homotopic to $1$. 

\begin{figure}[!htpb]
\scalebox{1.0}{\includegraphics{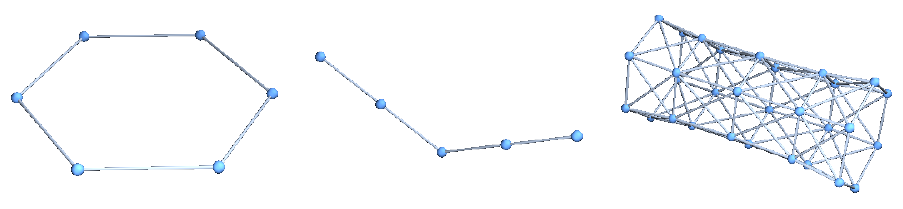}}
\label{Figure 11}
\caption{
The strong product a circular graph $C_6$ and a linear graph 
$L_5$ is a discrete cylinder.
}
\end{figure}

\begin{figure}[!htpb]
\scalebox{1.0}{\includegraphics{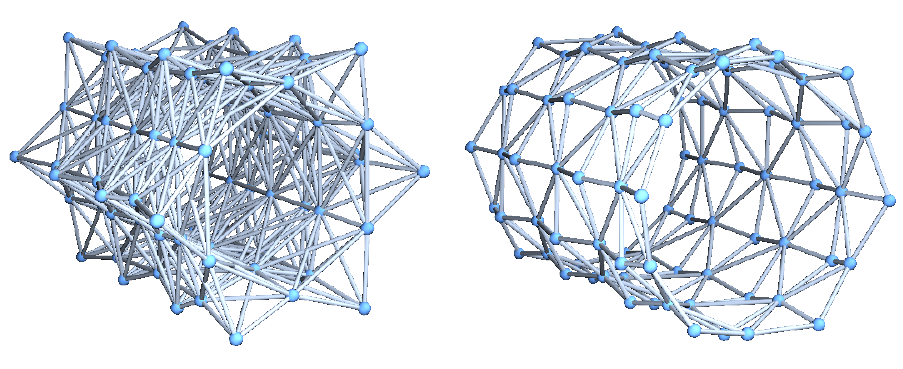}}
\label{Figure 12}
\caption{
The strong product $\psi(G \times H)$ (left) of a circular complex $G$ and linear complex $H$
compared with the Barycentric product $\phi(G \times H)$ (right).
Both have the same number of vertices and are homotopic
(Theorem 2). Only $\phi(G \times H)$ is a discrete manifold with boundary but
$\psi(G \times H)$ is the strong product of $\psi(G)$ and $\psi(H)$. 
}
\end{figure}

\section{Theorem 5}

\paragraph{}
We can assign to a product of simplicial complexes 
$G \times H$ (the Cartesian product as sets is not a simplicial complex) the graph 
$\psi(G) \cdot \psi(H)$, which is the {\bf strong product} 
of the two connection graphs. This product
appeared already in work of Shannon \cite{Shannon1956} or Sabidussi \cite{Sabidussi}.
Shannon used it when defining Shannon capacity of a graph $A$ as the asymptotic growth of the 
{\bf independence number} of the $n$'th power $A^n$ of a complex. 
This number $\Theta(A)$ is in general difficult to compute:
Shannon himself computed it for all graphs with less than $5$ points and estimated
$\sqrt{5} \leq \Theta(C_5) \leq 5/2$. Only 23 years later, $\Theta(C_5)=\sqrt{5}$ was proven
\cite{Lovasz1979} and $\Theta(C_7)$ is still unknown \cite{Matousek}. Lovasz introduced the
Lowasz number $\theta(G)$ which is the $\sec(\theta)^2$ of the maximal angle of the opening 
angle of the {\bf Lovasz umbrella}, a geometric cone shaped object obtained from an orthonormal representation
of the graph, and satisfies $i(G) \leq \Theta(G) \leq \theta(G) \leq c(\overline{G})$, where $c(G)$ is the
chromatic number also dubbed {\bf sandwich theorem} \cite{KnuthSandwich} showing that $\Theta$ has lower
and upper bounds which are NP hard but also has an upper bound $\theta(G)$ which can be computed in polynomial
time. 

\begin{figure}[!htpb]
\scalebox{1.0}{\includegraphics{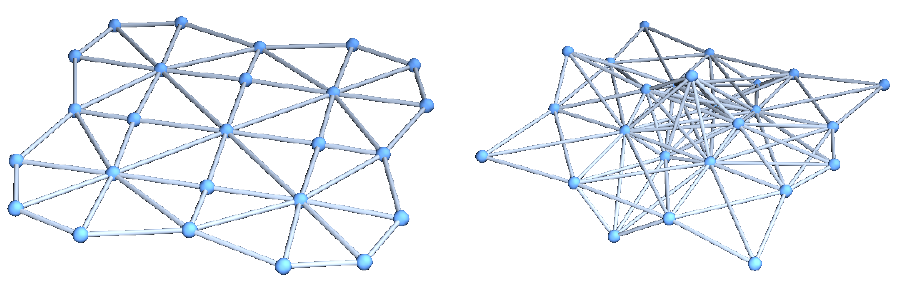}}
\label{Figure 13}
\caption{
We see $\phi(G \times H)$ and $\psi(G \times H)$ for $G=H=\{ (1,2),(2,3),(1),(2),(3) \}$. 
The Barycentric product is the Stanley-Reisner product multiplying $G=x+y+z+xy+yz$,
$H=u+v+w+uv+vw$ and connecting monomials which divide each other. 
The right graph $\psi(G \times H)$ is homotopic to 
$\phi(G \times H)$ and is equal to $\psi(G) \cdot \psi(H)$. 
Not only cohomology \cite{KnillKuenneth} but also the spectral compatibility is
satisfied as the connection Laplacian $L(G \times H) = L(G) \otimes L(H)$ is the tensor product.
We have $\Theta( \phi(G \times H)) = \Theta(\psi(G \times H)) = f_0(G) f_0(H)=3 \cdot 3=9$. 
}
\end{figure}

\paragraph{}
The independence number $i(A)$ of a graph is $A$ the clique number 
of the graph complement $\overline{A}$.
Because clique numbers are hard to computer, also independence numbers are 
difficult to get. We expect especially the {\bf Shannon capacity}
$$  \log(\Theta(A)) = \lim_{n \to \infty} \frac{1}{n} \log(i(A^n))   \;  $$
to be difficult to compute in general. Shannon used the log \cite{Shannon1956},
while in modern treatments like \cite{Matousek} one looks at $\Theta(A)$, which we
use too. Here is a bit of a surprise. For connection graphs as 
well as Barycentric graphs, we know the capacity explicitly. 

\paragraph{}
Let $f_0(G)$ denote the number of $0$-dimensional sets in $G$. 

\begin{thm}[Theorem 5]
We have $\Theta(\psi(G))=f_0(G)$.
\end{thm}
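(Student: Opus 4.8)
The plan is to identify the strong powers of $\psi(G)$ with intersection graphs of product sets (``boxes'') and then simply count. Write $V$ for the set of vertices of $G$; since the $0$-dimensional sets of $G$ are exactly the singletons $\{v\}$, $v\in V$, and $G$ is closed under non-empty subsets, we have $f_0 := f_0(G) = |V|$. A vertex of the $n$-th strong power $\psi(G)^n$ is an $n$-tuple $(a_1,\dots,a_n)$ of sets $a_i \in G$. Unwinding the definition of the strong product and using that $a_i = b_i$ already forces $a_i \cap b_i \neq \emptyset$, two distinct such tuples are adjacent in $\psi(G)^n$ exactly when $a_i \cap b_i \neq \emptyset$ for every coordinate $i$. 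Attaching to the tuple $(a_1,\dots,a_n)$ the non-empty box $a_1 \times \cdots \times a_n \subseteq V^n$ and noting that $(a_1\times\cdots\times a_n) \cap (b_1 \times\cdots\times b_n) = (a_1\cap b_1)\times\cdots\times(a_n\cap b_n)$, adjacency in $\psi(G)^n$ becomes precisely non-disjointness of the associated boxes.

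With this dictionary the independence numbers are immediate. An independent set in $\psi(G)^n$ is a family of pairwise disjoint non-empty subsets of $V^n$; since a universe of size $N$ contains at most $N$ pairwise disjoint non-empty subsets (choose one element from each; the chosen elements are distinct) and $|V^n| = f_0^n$, we get $i(\psi(G)^n) \le f_0^n$. Conversely the $f_0^n$ tuples of singletons $(\{w_1\},\dots,\{w_n\})$ have pairwise disjoint one-point boxes, so $i(\psi(G)^n) \ge f_0^n$. Hence $i(\psi(G)^n) = f_0^n$ for every $n$, and therefore $\Theta(\psi(G)) = \lim_{n\to\infty} i(\psi(G)^n)^{1/n} = f_0$. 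The same box argument applies word for word to a Cartesian product $G = G_1 \times \cdots \times G_k$: the associated graph is the strong product $\psi(G_1)\cdot\ldots\cdot\psi(G_k)$, its vertices are tuples $(a_1,\dots,a_k)$ with $a_i \in G_i$, the boxes now live in the universe $V(G_1)\times\cdots\times V(G_k)$ of size $f_0(G_1)\cdots f_0(G_k) = f_0(G)$, and the tuples of singletons again match that bound.

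To additionally pin down the Lovász number and see that the sandwich theorem is sharp here, I would write down an explicit Lovász umbrella for $\psi(G)$ in $\mathbb{R}^{f_0}$: assign to $a \in G$ the normalized indicator vector $u_a = |a|^{-1/2}\sum_{v \in a} e_v$, so that $\langle u_a, u_b\rangle = |a\cap b|/\sqrt{|a|\,|b|}$ vanishes exactly when $a \cap b = \emptyset$, i.e. exactly for non-adjacent vertices of $\psi(G)$; this is a valid orthonormal representation. Taking the handle $c = f_0^{-1/2}\sum_{v\in V} e_v$ gives $\langle c, u_a\rangle^2 = |a|/f_0 \ge 1/f_0$, whence $\theta(\psi(G)) \le \max_{a\in G} f_0/|a| \le f_0$. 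Combined with $f_0 = i(\psi(G)) \le \Theta(\psi(G)) \le \theta(\psi(G))$ this re-derives $\Theta(\psi(G)) = \theta(\psi(G)) = f_0$, and since $\theta$ is multiplicative over strong products and additive over disjoint unions, it also delivers the claimed compatibility of $\Theta$ with the ring operations.

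There is essentially no obstacle once the box reinterpretation is spotted; the only step that needs a moment's care is the upper bound $i(\psi(G)^n) \le f_0^n$, which is exactly the elementary fact that a universe of size $N$ holds at most $N$ pairwise disjoint non-empty sets. This is what keeps the capacity from exceeding $f_0$ and is also why no hard clique or independence computation is required, in contrast with the general situation for Shannon capacity.
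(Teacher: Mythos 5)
Your proof is correct, and it follows the same overall strategy as the paper: pin the capacity between the independence number and the Lov\'asz number, compute $i(\psi(G)^n)$ exactly for every $n$, and exhibit the explicit umbrella in $\mathbb{R}^{f_0}$. Where you genuinely differ is in the mechanism for the crucial upper bound $i(\psi(G)^n)\le f_0^n$. The paper argues by exchange: if an independent set contains a tuple with a positive-dimensional coordinate, replace that tuple by the tuples of its zero-dimensional parts, strictly increasing the independent set, and conclude that a maximum independent set consists of singleton tuples only. Your route instead encodes each vertex of $\psi(G)^n$ as the box $a_1\times\cdots\times a_n\subseteq V^n$, observes that independence means pairwise disjointness of boxes, and invokes the elementary fact that a universe of size $N$ carries at most $N$ pairwise disjoint non-empty subsets. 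This is a cleaner and more self-contained disposal of the key step --- it avoids having to verify that the exchanged family remains independent relative to the rest of the set, which the paper leaves implicit --- and it transfers verbatim to products $G_1\times\cdots\times G_k$. Your umbrella computation is also slightly more careful than the paper's: you normalize the indicator vectors $u_a=|a|^{-1/2}\sum_{v\in a}e_v$ so that they are genuinely unit vectors, whereas the paper writes down the unnormalized indicators; with the normalization one gets $\langle c,u_a\rangle^2=|a|/f_0\ge 1/f_0$ and hence $\theta(\psi(G))\le f_0$ exactly as claimed. Both routes are valid; yours buys a tighter argument for the independence bound, while the paper's exchange argument makes more visible why only the zero-dimensional sets can appear in an optimal independent set.
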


\paragraph{}
So, while for some $G=C_n$, we can not compute $\Theta(G)$, we can
do it for $\psi(G)$. We know the capacity for complete graphs $A=K_n$, for $A=C_{2n}$ the 
linear graph $L_{n}$, as well as all connection graphs 
$A=\psi(G_1 \times \cdots \times G_n))$ of products of 
simplicial complexes have the property $\Theta(A) = i(A)$. This begs for the question: 
which connected graphs have the property that their Shannon capacity is 
equal to the independence number? 
The above list is far from complete: for example take $K_5$ and remove edges to get a connected
graph making sure that $i(A)$ remains $2$. Then $\Theta(A)=2$ also. 
These are graphs for which communication capacity can not be increased 
by taking products.

\paragraph{}
While $\Theta(A+B) \geq \Theta(A) + \Theta(B)$ in general, it is a bit surprising that 
$\Theta(A+B)$ can be strictly larger than $\Theta(A) + \Theta(B)$ as believed to be true
by Shannon himself \cite{Matousek}. However, for connection graphs, $\Theta$ is a compatible
with addition and multiplication:

\begin{coro}
If $A=\psi(G), B=\psi(H)$ are connection graphs, then 
$\Theta(A+B) = \Theta(A) + \Theta(B)$ and 
$\Theta(A \cdot B) = \Theta(A) \Theta(B)$. 
\end{coro}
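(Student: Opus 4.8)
The plan is to derive both identities directly from Theorem 5 by observing that the class of connection graphs is closed under the disjoint union $+$ and the strong product $\cdot$, with these two graph operations corresponding exactly to the disjoint union and the Cartesian product of the underlying simplicial complexes. Since $\Theta(\psi(G)) = f_0(G)$ for every complex $G$ by Theorem 5, and since $\Theta$ is super-additive under $+$ and satisfies $\Theta(A\cdot B)\ge\Theta(A)\Theta(B)$ under $\cdot$ in full generality, all the content will sit in identifying $A+B$ and $A\cdot B$ as connection graphs and then counting $0$-dimensional cells.

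For $\Theta(A+B)$, I would let $G \oplus H$ denote the simplicial complex obtained by placing $G$ and $H$ on disjoint vertex sets and taking the union of their face sets. A face $x\in G$ and a face $y \in H$ then satisfy $x\cap y = \emptyset$, so no edge of $\psi(G\oplus H)$ runs between the $G$-part and the $H$-part, while on each part the edge set is exactly that of $\psi(G)$ resp.\ $\psi(H)$; hence $\psi(G\oplus H) = \psi(G)+\psi(H) = A+B$. The $0$-dimensional faces of $G\oplus H$ are those of $G$ together with those of $H$, so $f_0(G\oplus H) = f_0(G)+f_0(H)$. Applying Theorem 5 to $G$, to $H$, and to $G\oplus H$ then gives
\[
 \Theta(A+B) = \Theta(\psi(G\oplus H)) = f_0(G\oplus H) = f_0(G)+f_0(H) = \Theta(A)+\Theta(B).
\]

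For $\Theta(A\cdot B)$, I would use the identity $\psi(G)\cdot \psi(H) = \psi(G\times H)$ recorded before Theorems 4 and 5, so that $A\cdot B$ is itself the connection graph of the product $G\times H$, whose Barycentric graph is the Stanley--Reisner product $\phi(G)\cdot\phi(H)$. In the Cartesian product the dimension of a cell is the sum of the dimensions of its two constituent faces, so a cell of $G\times H$ is $0$-dimensional precisely when both of its factors are; hence $f_0(G\times H) = f_0(G)\,f_0(H)$. Invoking Theorem 5 in the form that applies to products $\psi(G_1\times\cdots\times G_k)$,
\[
 \Theta(A\cdot B) = \Theta(\psi(G\times H)) = f_0(G\times H) = f_0(G)\,f_0(H) = \Theta(A)\,\Theta(B).
\]

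The main obstacle is the bookkeeping in the product case: one must check that the Cartesian (Stanley--Reisner) product introduces no spurious $0$-dimensional cells and that its $0$-cells are exactly the pairs of $0$-cells, which is where additivity of dimension under the product enters; the disjoint-union identification and every appeal to Theorem 5 are then immediate. As a consistency check one may note that Lov\'asz's theorem gives $\theta(A+B)=\theta(A)+\theta(B)$ and $\theta(A\cdot B)=\theta(A)\theta(B)$, so combining the general lower bounds $\Theta(A+B)\ge\Theta(A)+\Theta(B)$ and $\Theta(A\cdot B)\ge\Theta(A)\Theta(B)$ with $\Theta\le\theta$ and the explicit Lov\'asz umbrella of Theorem 5 (which yields $\theta(\psi(G))=f_0(G)$) produces the same two equalities.
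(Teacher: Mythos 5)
Your proof is correct and follows essentially the same route as the paper: the paper's own (very terse) argument likewise rests on $f_0$ being a ring homomorphism under disjoint union and Cartesian product together with the extension of Theorem 5 to products $\psi(G_1\times\cdots\times G_k)$. Your version merely makes explicit the identifications $\psi(G\oplus H)=\psi(G)+\psi(H)$ and $\psi(G\times H)=\psi(G)\cdot\psi(H)$ and the count of $0$-dimensional cells, which is exactly the bookkeeping the paper leaves implicit.
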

\begin{proof}
This follows directly from the fact that for connection graphs as well as products
of connection graphs, the number $f_0(G)$ of $0$-dimensional elements in the complex is 
a ring homomorphism and that $\Theta(\psi(G))=f_0(G)$ for products of 
simplicial complexes. 
\end{proof}

\paragraph{}
The Shannon capacity joins now a larger and larger class of functionals
which are compatible with the ring structure: $f_0(G), |G|$, the Euler characteristic, 
Wu characteristic or the $\zeta$-function $\zeta(s) = \sum_j \lambda_j^{-s}$ 
defined by the eigenvalues $\lambda_j$ of the connection Laplacian $L=1+A(G)$ of $G$,
where $A(G)$ is the adjacency matrix of the connection graph $\psi(G)$. 

\section{Proof of Theorem 1}

\paragraph{}
Let $d(x)$ denote the {\bf vertex degree} of $x \in G$ when $x$ is considered to be 
a vertex in the connection graph $\psi(G)$. Let $\delta(x)$ denote the {\bf minimal 
vertex degree of all neighboring vertices} of $x$. Formally, this is
$$   \delta(x) = {\rm min}_{y \in S(x)} d(y) \; . $$
The following lemma shows that strict local minima of the function $d: V \to \mathbb{N}$
reveal the $0$-dimensional sets, the sets $x$ with cardinality $|x|=1$.

\begin{lemma}
If $G$ is a complex, then ${\rm dim}(x)=0$ for $x \in G$ if and 
only if $d(x)<\delta(x)$. 
\end{lemma}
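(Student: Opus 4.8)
The plan is to describe the neighborhood $S(x)$ of a vertex $x\in G$ in the connection graph $\psi(G)$ directly in terms of the \emph{stars} ${\rm st}(v)=\{z\in G : v\in z\}$ of the ground-set elements $v$, and then to compare the degree $d(x)$ with the degrees $d(y)$ of well chosen neighbors $y$ of $x$. First I would record the one observation everything rests on: if $x=\{v\}$ is $0$-dimensional, then a set $z\neq x$ is a neighbor of $x$ in $\psi(G)$ exactly when $v\in z$, so $S(x)={\rm st}(v)\setminus\{x\}$ and hence $d(x)=|{\rm st}(v)|-1$. Throughout I use that $\psi(G)$ is a simple graph (so $x$ is never its own neighbor) and that ``$G$ is a complex'' means every non-empty subset of a set in $G$ again lies in $G$.

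For the ``only if'' direction, suppose $x=\{v\}$ with ${\rm dim}(x)=0$ and let $y\in S(x)$ be any neighbor; then $v\in y$ and $y\neq x$, so $|y|\ge 2$. Every set containing $v$ meets $y$, hence is a neighbor of $y$ with the sole exception of $y$ itself, which already gives $d(y)\ge |{\rm st}(v)|-1=d(x)$. To make this strict, pick $w\in y$ with $w\neq v$ (possible since $|y|\ge 2$): then $\{w\}\in G$ is a neighbor of $y$ not containing $v$, hence not among the neighbors of $y$ counted so far, so $d(y)\ge |{\rm st}(v)|>d(x)$. As this holds for every $y\in S(x)$, we get $\delta(x)>d(x)$; the degenerate case $S(x)=\emptyset$ is immediate, since then $d(x)=0<\delta(x)=\infty$.

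For the ``if'' direction I would argue by contraposition: assume ${\rm dim}(x)\ge 1$, i.e.\ $|x|\ge 2$, and exhibit a neighbor of $x$ of degree $\le d(x)$. Choose any $v\in x$ and set $y=\{v\}$; since $G$ is a complex, $y\in G$, and $y$ is a neighbor of $x$ because $y\neq x$ and $v\in x\cap y$. Now every $z\neq x$ with $v\in z$ meets $x$ and so is a neighbor of $x$, i.e.\ ${\rm st}(v)\setminus\{x\}\subseteq S(x)$; since $x\in{\rm st}(v)$ and $x\neq y$ this subset has $|{\rm st}(v)|-1$ elements, whence $d(x)=|S(x)|\ge |{\rm st}(v)|-1=d(y)$ and therefore $\delta(x)\le d(y)\le d(x)$, so $d(x)<\delta(x)$ fails.

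The mathematical content is really just this star computation; the part that needs care is the $\pm 1$ bookkeeping forced by the simple-graph convention (consistently excluding $x$, resp.\ $y$, from its own neighborhood), together with the edge case where $x$ has no neighbors. I also want to flag that the hypothesis that $G$ is a complex is essential rather than cosmetic: it is exactly what makes the singletons $\{v\}$ (in the ``if'' direction) and $\{w\}$ (in the ``only if'' direction) available as vertices of $\psi(G)$, and the statement genuinely fails for set systems not closed under taking subsets.
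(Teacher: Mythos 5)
Your proof is correct and follows essentially the same route as the paper's: both directions rest on comparing $d(x)$ with $d(y)$ via the star of a ground-set element, with strictness coming from the extra singleton $\{w\}$, $w\neq v$, that a positive-dimensional neighbor must meet. Your version merely makes the $|{\rm st}(v)|-1$ bookkeeping explicit and additionally covers the isolated-singleton case $S(x)=\emptyset$, which the paper's proof passes over silently.
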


\begin{proof}
Assume $x \in G$ is a $0$-dimensional point and assume that
$y \in G$ is connected to $x$. Because $x,y$ intersect and $x$ is a point, 
$x \subset y$. Furthermore, $y$ intersects any set $z$ which $x$ intersects
meaning $d(y) \geq d(x)$. 
Since $y$ also intersects some point different than $x$, we have $d(y) > d(x)$.
On the other hand, assume that $x$ is a point which is not $0$-dimensional.
Then it contains a $0$-dimensional point $y$ and $d(x) > d(y)$ by what
we have seen before. It therefore can not happen that the degree $d(x)$
is smaller than any neighboring degree.
\end{proof}

\paragraph{}
We can now prove Theorem 1: 
given a graph $\psi(G)$ we want to reconstruct the simplicial complex $G$.
By the Lemma we can identify the set $G_0$ of $0$-dimensional sets. This is 
an independent set already. None of them are adjacent because two different $0$-dimensional
sets do not intersect. The $1$-dimensional sets $G_1$ are the vertices $y$ in the graph which 
have the property that $y \in S(a) \cap S(b)$ where $a,b \in G_0$ are two different 
$0$-dimensional points. The $2$-dimensional sets $G_2$ are the vertices $y$ in the graph which 
have the property that $y \in S(a) \cap S(b) \cap S(c)$ with three different 
vertices $a,b,c$. We see that $G_k = \{ x \in V, x \in S(a_0) \cap S(a_2) \cdots \cap S(a_k)$,
$a_0,a_1,\dots, a_k \in G_0$ are all disjoint. 
This reconstruction only needs a polynomial amount of computation steps: we have to go through
all the $n$ vertices, compute $d(x)$ and then form intersections of unit spheres. 

\paragraph{}
The same proof also establishes that the Barycentric graph $\phi(G)$ 
determines $G$. The later could also be achieved also differently: we can see the {\bf facets}
of $G$ by looking at maximal sub-graphs which belong to Barycentric refinements of 
simplices. But this point of view is computationally much more costly as we have to 
find subgraphs which are refinements of simplices which in particular also means requires
to find complete subgraphs. 

\section{Proof of Theorem 2}

\paragraph{}
The proof of theorem 2 can serve as a nice independent introduction to 
graph homotopy. Doing graph homotopy steps can be seen as a game. Indeed, 
the {\bf homotopy puzzle} to deform a graph homotopic to $1$ to $1$ is a 
nice game. It can be difficult, like for dunce hats or bing houses. Like
for any game, it is good to know what combinations of moves can do. We will
see in particular that {\bf edge refinements} are homotopy steps. 

\paragraph{}
When combining two homotopy steps we can achieve that the set of vertices
does not change but that we can get rid of an edge. 

\begin{lemma}[Lamma A]
If $A$ is a graph and $e=(v,w)$ is an edge such that $S(v)$ and
$S(v)-w$ are both contractible, then $A$ is homotopic to $A-e$.
\end{lemma}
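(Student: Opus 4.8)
The plan is to realize the passage from $A$ to $A-e$ by inserting a single auxiliary vertex and then removing two vertices, each removal being a legitimate homotopy step because the relevant unit sphere is contractible. First I would introduce a new vertex $m$ and attach it precisely to the subgraph $S(v)$ of $A$; since $S(v)$ is contractible by hypothesis, this is exactly the ``attach a vertex to a contractible subgraph'' half of a homotopy step, so the resulting graph $A^+ = A +_{S(v)} m$ is homotopic to $A$. In $A^+$ the vertex $m$ is joined to everything $v$ is joined to (its neighbors are $S(v)$), so $v$ and $m$ play almost symmetric roles; the point of the construction is that $m$ now gives us a second ``copy'' of $v$'s star that we can use to dismantle the edge $e$.

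Next I would remove the vertex $v$ from $A^+$. For this to be a homotopy step I need the unit sphere of $v$ inside $A^+$ to be contractible. That unit sphere is $S_{A^+}(v) = S(v) \cup \{m\}$ with $m$ joined to all of $S(v)$ — that is, it is the cone over the contractible graph $S(v)$, hence contractible (coning a contractible graph over a new apex is one homotopy step from $K_1$). So $A^+ - v$ is homotopic to $A^+$, hence to $A$. Now in $A^+ - v$ the vertex $m$ has taken over the role of $v$ except that it is \emph{not} joined to $w$: indeed $w \in S(v)$ but the neighbors of $m$ are $S(v)$, and once $v$ is deleted the would-be edge $(v,w)$ is simply gone, while $m$ was never adjacent to $w$. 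So $A^+ - v$ is already ``$A-e$ with $v$ renamed $m$, but with $m$ possibly missing some of $v$'s incidences''. The cleanest way to finish is to check that $A^+ - v$ is in fact isomorphic to $(A-e) +_{S(v)-w} v'$ for a fresh vertex $v'$; then removing that vertex is the reverse homotopy step provided $S(v)-w$ is contractible, which is the second hypothesis of the lemma. Concretely: $m$ is adjacent in $A^+-v$ exactly to $S_{A}(v)\setminus\{w\} = S(v)-w$, and the induced subgraph on these neighbors, together with the rest of the graph, is $(A-e)$ with an extra apex glued onto $S(v)-w$. Deleting $m$ is a valid homotopy step because its unit sphere is $S(v)-w$, contractible by assumption, and what remains is $A-e$.

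The main obstacle, and the step I would be most careful about, is the bookkeeping in the last paragraph: verifying that the induced subgraph of $A^+-v$ on $V(A)\setminus\{v\}\cup\{m\}$ really is $A-e$ with an apex over $S(v)-w$, i.e. that no spurious edges were created and none destroyed except $e$. The only edges incident to the new vertex $m$ are, by construction, those to $S(v)$; deleting $v$ removes exactly the edges of the star of $v$, among them $e=(v,w)$; and $m$ is not joined to $w$ because $m$'s neighborhood is $S(v)$ which does not contain $w$ as an \emph{edge-endpoint of} $m$ only if we attached $m$ to $S(v)$ as an induced subgraph — here one must be slightly careful that ``attach $m$ to every vertex of $S(v)$'' means $m\sim w$ as well, so in $A^+$ we do have $m\sim w$, and the edge $(m,w)$ survives the deletion of $v$. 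That means $A^+-v$ is $A-v$ together with an apex $m$ over all of $S(v)$, which is $A-e$ only after we also remove $m$; so the order is: add $m$ over $S(v)$, delete $v$ (sphere $= $ cone over $S(v)$), delete $m$ — but now $m$'s sphere in $A^+-v$ is all of $S(v)$, not $S(v)-w$, so deleting $m$ would bring us back to $A-e$ with $w$ isolated from the former star, i.e.\ to $A-v$, not $A-e$. The fix is to attach $m$ to $S(v)-w$ rather than to all of $S(v)$ in the first step; then $A^+ := A +_{S(v)-w} m$ is homotopic to $A$ because $S(v)-w$ is contractible, the sphere of $v$ in $A^+$ is $S(v)$ with an apex $m$ over $S(v)-w$, which is contractible because $S(v)$ is contractible and coning part of it off preserves that, deleting $v$ is then legitimate, and finally deleting $m$ (whose sphere is the contractible $S(v)-w$) yields exactly $A-e$. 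Pinning down precisely which of $S(v)$ or $S(v)-w$ to cone with, and checking the two intermediate spheres are contractible, is the real content; everything else is the definition of a homotopy step.
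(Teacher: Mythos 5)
Your core idea---introduce a replacement vertex attached to the contractible subgraph $S(v)-w$ and then delete the original $v$ over its contractible sphere $S(v)$---is exactly the content of the paper's proof, which performs the same two moves in the opposite order (first delete $v$, then re-attach a vertex along $S(v)-w$; the result is $A-e$). However, your final write-up executes one step too many and therefore lands on the wrong graph. Count vertices: you add one vertex ($m$) and delete two ($v$ and then $m$), so the end result has $|V(A)|-1$ vertices, whereas $A-e$ has $|V(A)|$ vertices; what you actually reach after the third move is $A-v$, not $A-e$. The correct place to stop is after the second move: once $m$ has been attached to $S(v)-w$ and $v$ has been removed, the graph $\bigl(A+_{S(v)-w}m\bigr)-v$ \emph{is already} isomorphic to $A-e$, with $m$ renamed to $v$ (its neighbours are precisely $S(v)-w$, i.e.\ all former neighbours of $v$ except $w$, and no other adjacency has changed). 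Deleting $m$ afterwards destroys exactly the vertex you were trying to preserve.

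A second, harmless slip: after attaching $m$ to $S(v)-w$, the vertex $m$ is \emph{not} adjacent to $v$ (since $v\notin S(v)-w$), so the unit sphere of $v$ in $A+_{S(v)-w}m$ is just $S(v)$, not ``$S(v)$ with an apex over $S(v)-w$''. This does not affect validity---$S(v)$ is contractible by hypothesis, so deleting $v$ is still a legal homotopy step---but it shows that the bookkeeping you yourself flagged as the delicate point was not quite pinned down. With these two corrections your argument is complete and coincides with the paper's.
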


\begin{proof}
$A \to B=A-v$ is a homotopy step because $S(v)$ is contractible.
Now $B \to B +_A v$ is a homotopy step. And $B +_A v$ is $A-e$.
\end{proof}

\paragraph{}
If $e=(a,b)$ is an edge in a finite simple graph $A$, an 
{\bf edge refinement} $A+e$ is a new graph obtained from $A$ by 
replacing $e$ with a new vertex $e$
and connecting this new vertex to $a$, $b$ as well as every vertex in
$S(a) \cap S(b)$. Edge refinement is a homotopy. This is true in general for 
any graph. The graphs do not need to come from simplicial complexes.

\begin{lemma}[Lemma B]
If $G$ is a graph and $e=(a,b)$ is an edge. 
Then the edge refinement $G+e$ is homotopic to $G$.
\end{lemma}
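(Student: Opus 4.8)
The plan is to realize the edge refinement $G \to G+e$ as a single ``attach a vertex to a contractible subgraph'' homotopy step, followed by a ``remove a vertex whose unit sphere is contractible'' step that deletes the old edge, using Lemma A as the bookkeeping tool. Concretely, write $N = S(a) \cap S(b)$ for the common neighbours of the endpoints of $e=(a,b)$. In $G+e$ the new vertex $e$ is joined exactly to $\{a,b\} \cup N$. First I would observe that $G+e$ can be built from $G$ by attaching $e$ to the subgraph $A$ of $G$ induced on $\{a,b\} \cup N$: this is a homotopy step provided $A$ is contractible. Since $a$ and $b$ are adjacent in $G$ (they form the edge $e$) and every vertex of $N$ is, by definition, adjacent to both $a$ and $b$, the subgraph $A$ is a cone with apex $a$ over the graph induced on $\{b\} \cup N$; a cone is contractible because removing the apex leaves its base and the apex's unit sphere is the whole base again (formally: $A$ contractible iff $S_A(a) = A - a$ is contractible and $A-a$ is contractible, but $S_A(a) = A-a$ here, so one checks $A-a$ is contractible, and $A-a$ is itself a cone with apex $b$ over $N$, and $N$ is either empty — giving $A-a = K_2$ or $K_1$, contractible — or nonempty and we recurse on the fact that any cone is contractible by the standard induction). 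So $G +_A e = G+e$ is a valid homotopy step and $G$ is homotopic to $G+e$ \emph{if} we are willing to stop here — but note we have only \emph{added} $e$; we still have the old edge $(a,b)$ present in $G+e$, which is exactly the edge refinement as defined, so in fact this one step already proves the claim.

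On reflection the subtlety is purely about matching the definition: the stated edge refinement $G+e$ keeps the edge $(a,b)$ and the vertex $e$ is an extra vertex adjacent to $a$, $b$, and $S(a)\cap S(b)$. So the proof really is just the single attachment step above: $G+e = G +_A e$ with $A$ the cone on $\{a,b\}\cup(S(a)\cap S(b))$ apexed at $a$, $A$ contractible, hence $G \simeq G+e$. I would therefore structure the proof as: (1) identify $A$, (2) prove $A$ is contractible by exhibiting it as an iterated cone (apex $a$, then apex $b$, base $S(a)\cap S(b)$), invoking the elementary fact — provable by the same recursion that defines contractibility — that any cone $x * H$ over a contractible or empty base $H$ is contractible, and handling the base cases $S(a)\cap S(b) = \emptyset$ (so $A=K_2$) and $|S(a)\cap S(b)|=1$ (so $A=K_3$) directly, (3) conclude that $G +_A e = G+e$ is a homotopy step.

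The main obstacle I expect is Step (2): one must be careful that ``cone over a contractible base is contractible'' is not circular given the recursive definition, and that $S(a)\cap S(b)$ need not itself be contractible (e.g. it could be a $0$-sphere, two disjoint points), in which case $A$ is the join $K_2 * \{p,q\}$ — still contractible, because contractibility of a cone $x*H$ only needs the apex's complement $x*H - x = H$... no: it needs $H$ contractible. So for a $0$-sphere base the cone is $K_2 * S^0$, which one sees directly is contractible by removing one point $p$ of $S^0$: $S(p) = \{a,b\}= K_2$ contractible, and $A-p = K_2 * \{q\} = K_3$ contractible. Thus the right lemma to isolate and prove first is: \emph{if $v$ is a vertex adjacent to every other vertex of a graph $H'$ and $H' - v$ is contractible or a nonempty graph in which some vertex is adjacent to all others, then $H'$ is contractible} — i.e. a clean induction on the number of ``universal'' vertices, bottoming out at $K_1$. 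With that lemma in hand the argument is mechanical; without stating it carefully the cone claim looks like it might fail for non-contractible bases, which is the trap to avoid.
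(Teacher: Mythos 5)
Your expansion step is correct and coincides with the paper's first move: the induced subgraph $A$ on $\{a,b\}\cup(S(a)\cap S(b))$ has $a$ adjacent to every other vertex of $A$, and a graph with a universal vertex is contractible (by the induction you sketch, removing a non-apex vertex so that contractibility of the base $S(a)\cap S(b)$ is never needed); hence adjoining the new vertex $e$ over $A$ is a legitimate homotopy step. The genuine gap is the decision, in your ``on reflection'' paragraph, to stop there. The paper defines the edge refinement $G+e$ as the graph obtained by \emph{replacing} the edge $e=(a,b)$ by the new vertex: the old edge $(a,b)$ is \emph{not} present in $G+e$. This is edge subdivision, the local Barycentric refinement; for $G=C_n$ the edge refinement is $C_{n+1}$, whereas your one-step construction yields $C_n$ with a triangle glued along the edge $(a,b)$. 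After your single step you are at an intermediate graph $G'$ which still contains $(a,b)$, so $G'\neq G+e$ and the claim is not yet proved.

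What is missing is exactly the second half of the plan you announced in your opening sentence and then abandoned: showing that deleting the edge $(a,b)$ from $G'$ is a homotopy. This is what the paper invokes Lemma A for, applied to the edge $(a,b)$ inside $G'$: one must verify that $S_{G'}(a)$ and $S_{G'}(a)-b$ are contractible in $G'$, and the whole point of having added $e$ first is that $e$ now lies in $S_{G'}(a)$, adjacent there to $b$ and to all of $S(a)\cap S(b)$. That verification --- not the contractibility of $A$, which is routine --- is where the content of Lemma B sits, and your proposal does not contain it.
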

\begin{proof} 
Without removing the edge $(a,b)$, add a new vertex $e$ and connect it to $a,b$ 
and $A=S(a) \cap S(b)$. 
Since the graph generated by $e,a,b,V(A)$ is now a unit ball with center $e$, 
it is contractible so that this is a homotopy step. 
Now, $S(a)$ and $S(a) \setminus b$ are both contractible in this new graph. 
By Lemma A, it remains contractible after removing the edge $e$ from it. 
Now we have the edge refinement. 
\end{proof}

\paragraph{}
The following Lemma is not true without the Barycentric assumption.

\begin{lemma}[Lemma C]
If $G$ is a Barycentric complex and $y,z$ are two elements with 
$y \cap z \neq \emptyset$, but not $y \subset z$ nor $z \subset y$, 
then $S(y) \cap S(z)$ is contractible in the connection graph of $G$. 
\end{lemma}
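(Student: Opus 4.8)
The plan is to show that the induced subgraph $X:=S(y)\cap S(z)$ of $\psi(G)$ on the common neighbours of $y$ and $z$ is contractible. Since $G$ is Barycentric, write $G=\phi(H)$; then every element of $G$ is a flag (a chain) of $H$, and $X$ is the set of flags $w$ of $H$ with $w\neq y$, $w\neq z$, $w\cap y\neq\emptyset$ and $w\cap z\neq\emptyset$, two of them adjacent when they intersect. Two consequences of the Barycentric hypothesis drive the whole argument, and both fail for general complexes, which is precisely why the hypothesis is needed: (i) if two simplices of $H$ lie in a common flag, they are comparable; and (ii) if $a\subseteq b$ are flags, then $b$ meets every flag that $a$ meets, so $a$ is dominated by $b$ and may be deleted from $\psi(G)$ by a homotopy step. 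The strategy is to exhibit, in as many configurations as possible, a \emph{cone point} of $X$, i.e.\ a vertex $w^{*}\in X$ adjacent to every other vertex of $X$: a graph with a cone point is contractible, since it is dismantled to $K_{1}$ by repeatedly deleting a vertex whose unit sphere is again a cone over $w^{*}$. The configurations admitting no cone point are then reduced to complexes with fewer simplices.

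Suppose first that $y$ is not a maximal simplex of $G$; choose a flag $w^{*}\supsetneq y$ of $H$. Then $w^{*}\in X$: it meets $y$; it meets $z$ through any element of the nonempty chain $y\cap z\subseteq y\subseteq w^{*}$; and $w^{*}\neq y$, while $w^{*}\neq z$ because $y\not\subset z$. Since $w\cap w^{*}\supseteq w\cap y\neq\emptyset$ for every $w\in X$, the vertex $w^{*}$ is a cone point and $X$ is contractible; likewise if $z$ is not maximal. We may therefore assume that $y$ and $z$ are distinct facets of $G$, i.e.\ distinct maximal chains of $H$ that share a simplex. Suppose next that no element of $y\setminus z$ is comparable in $H$ to any element of $z\setminus y$. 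Then the flag $y\cap z\in G$ is a cone point of $X$: given $w\in X$, pick $p\in w\cap y$ and $q\in w\cap z$; by (i) $p$ and $q$ are comparable, so by the assumption one of them lies in $y\cap z$, whence $w\cap(y\cap z)\neq\emptyset$. Again $X$ is a cone, hence contractible.

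There remains — and this is the main obstacle — the case in which $y,z$ are distinct facets of $G$ and there exist $u\in y\setminus z$, $v\in z\setminus y$ with $u\subsetneq v$; here examples show that $X$ need not be a cone. I would treat this case by induction on the number of simplices of $H$. One legal homotopy step is already visible: with $m:=\max(y\cap z)$ (which exists because $y\cap z$ is a nonempty chain), the vertex $\{m\}$ lies in $X$, and its unit sphere within $X$ consists of the flags of $X$ that contain $m$, which are pairwise intersecting and hence form a complete graph — so $\{m\}$ can be deleted. The remaining task is to recognise the graph left after this deletion, or after a bounded number of such deletions guided by the pair $u\subsetneq v$, as the analogous common neighbourhood $S(y')\cap S(z')$ inside $\psi(\phi(H'))$ for a complex $H'$ with fewer simplices than $H$ (obtained from $H$ by removing a simplex near $m$), so that the inductive hypothesis finishes the proof. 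The delicate point, which I have not fully pinned down, is exactly this reduction: choosing which simplex of $H$ to remove so that the recursion stays within the class of Barycentric complexes and each intermediate vertex deletion is a homotopy step. Everything else reduces to the two cone-point arguments above.
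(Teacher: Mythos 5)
Your first two configurations are handled correctly, and your cone-point strategy is in fact the same one the paper uses: its proof also culminates in exhibiting a single vertex $v$ ``which is connected to all other points in $S(y)\cap S(z)$'' (your cases are, if anything, cleaner than the paper's, which only treats the trivial case and then a case distinction whose conclusion is asserted rather than derived). But the case you leave open is a genuine gap, and it cannot be closed by finding a cone point, because in that configuration a cone point need not exist. Take $H$ to be the full simplex on $\{1,2,3,4\}$, $G=\phi(H)$, and the two maximal flags $y=\{\{2\},\{1,2\},\{1,2,3\},\{1,2,3,4\}\}$ and $z=\{\{1\},\{1,2\},\{1,2,4\},\{1,2,3,4\}\}$, so that $y\cap z=\{\{1,2\},\{1,2,3,4\}\}$ and $\{2\}\subset\{1,2,4\}$ is a cross-comparable pair. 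The graph $X=S(y)\cap S(z)$ contains the four vertices $\{\{1,2\}\}$, $\{\{1,2,3,4\}\}$, $\{\{2\},\{1,2,4\}\}$ and $\{\{1\},\{1,2,3\}\}$; a cone point would have to be a chain containing $\{1,2\}$, containing $\{1,2,3,4\}$, containing one of $\{2\},\{1,2,4\}$, and containing one of $\{1\},\{1,2,3\}$, and the only such chains are $y$ and $z$ themselves, which are excluded from $X$. So $X$ is not a cone, and your third case is exactly where all the work lies.

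Consequently the step you describe as ``not fully pinned down'' --- identifying the graph obtained after deleting $\{m\}$ (or several such vertices) with the corresponding intersection for a smaller complex $H'$ --- is the entire content of the remaining case, not a finishing detail; without it the lemma is proved only in two special configurations. Be aware also that you will not find the missing argument by consulting the paper: its construction produces, for each third vertex $u$ missing $y\cap z$, a chain $v=\{c,a,b\}$ with $c\in y\cap z$, $a\in y\cap u$, $b\in z\cap u$ (pairwise comparable since each pair lies in a common chain, hence a vertex of $G$ meeting $y$, $z$ and $u$), but this $v$ depends on $u$, and the example above shows that no single vertex can serve for all $u$ at once. To complete your proof you must either carry out the inductive reduction in full, or abandon the cone-point target in favour of an explicit dismantling of $X$ --- for instance by repeatedly deleting vertices $w$ dominated by a neighbour $w'$ in the sense $S_X(w)\cup\{w\}\subseteq S_X(w')\cup\{w'\}$, which is always a legal homotopy step, and then analysing the irreducible core that remains.
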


\begin{proof}
We know that $x=y \cap z$ is a simplex as an intersection of simplices.
Case (i): Assume there are no other parts
except subsets of $x$, then $x=S(y) \cap S(z)$ as a simplex is
contractible. 
Case (ii): If there is an other point $u$, we must have 
$u$ intersecting $x$ or then $u$ be contained in larger $v$ containing $x$.
Proof: Assume this is not the case, then we have $y,z,u$
which are pairwise not contained in each other but which intersect. Pick $0$-dimensional
points $x,a,b$ in the intersections. These points were already points in the original
complex $G$ from which one has taken the Barycentric refinement. 
The union $(x,a,b)$ of them generates a simplex $v$ in $G$ which is a point in 
the Barycentric refinement. This $v$ intersects all three sets $x,y,z$.
We can do that for any choice of $x,a,b$ so that there is a simplex $v$ containing $y,z,x$. 
So $S(y) \cap S(z)$ contains this point $v$ which is connected to all other points in 
$S(y) \cap S(z)$. So, $S(y) \cap S(z)$ is contractible.
\end{proof}

\paragraph{}
An example for a non-Barycentric complex, where it is false 
is $A = \psi(C_3)$, where $S(y \cap S(z)$ is not contractible. 

\paragraph{}
Here is the proof of Theorem 2: 

\begin{proof}
Assume $G$ is a Barycentric refined complex.
Let $y$,$z$ be two sets in $G$ which do intersect but which are not contained in each other. 
We want to show that removing this edge is a homotopy.                   
Using Lemma B, make an edge refinement with $e=(y,z)$. By Lemma C, $S(y) \cap S(z)$ is contractible
Now, $B=S(y) \cap S(z) + y + z$  is contractible. By Lemma A, we can remove the edge $e$ and have
a homotopy. Because the new vertex $e$ has a contractible unit sphere $S(y) \cap S(z) + y + z$, we can
remove it. Overall we have removed the edge $e$.
We can now do this construction for any connection between two sets which are not contained in each other.
In the end, we reach the graph $\phi(G)$. 
\end{proof} 

\paragraph{}
Going through connection graphs also allows to see that the Barycentric refinement
can be written as a homotopy: if $A$ is a graph, then its {\bf Barycentric
refinement} $A_1$ is the graph in which the simplices of $A$ are the
vertices and where two such simplices are connected if one is contained
in the other.

\begin{coro}
Any graph $A$ and its Barycentric refined graph $A_1$ are homotopic. 
\end{coro}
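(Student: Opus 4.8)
The plan is to deduce the corollary directly from Theorem 2 together with the reconstruction philosophy behind Theorem 1. The key observation is that the Barycentric refinement $A_1$ of a graph $A$ is, by definition, nothing other than $\phi(G)$ where $G$ is the Whitney (clique) complex of $A$. So the statement ``$A$ is homotopic to $A_1$'' is the statement ``$A$ is homotopic to $\phi(G)$'' for $G$ the Whitney complex of $A$. The issue is that Theorem 2 compares $\phi(G)$ and $\psi(G)$ only, and only for $G$ itself a Barycentric refinement; it does not directly say that $A$ and $\phi(G)$ are homotopic. So I would not try to invoke Theorem 2 verbatim but rather adapt its mechanism: the homotopy produced in the proof of Theorem 2 is an explicit sequence of edge refinements (Lemma B) followed by edge removals (Lemma A), and edge refinement of a graph is, performed on every edge, exactly the operation that turns $A$ into a graph containing the first Barycentric subdivision. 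The cleanest route is therefore to reprove it by iterating Lemma B.

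First I would recall that an edge refinement $A+e$ (Lemma B) is a homotopy step, valid for any finite simple graph, with no simplicial-complex hypothesis needed. Second, I would refine all edges of $A$ one after another; since Lemma B holds for arbitrary graphs, each refinement is a homotopy, and composing finitely many of them shows $A$ is homotopic to the graph $A'$ obtained by refining every original edge. Third — and this is the combinatorial heart — I would identify $A'$, or a further sequence of refinements of $A'$, with the full Barycentric refinement $A_1$: subdividing every edge once introduces the $1$-dimensional simplices as new vertices, but the genuine Barycentric refinement also needs vertices for every higher-dimensional simplex (triangle, tetrahedron, etc.) of $A$, together with the full containment-adjacency pattern. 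So after refining edges one must continue: refine the newly created edges that correspond to ``$0$-simplex $\subset$ $2$-simplex'' incidences, and so on up through the dimension of $A$. The claim is that after performing edge refinements in the right order, dimension by dimension, the resulting graph is exactly $\phi(\mathrm{Whitney}(A)) = A_1$, and every intermediate step was a homotopy.

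The step I expect to be the main obstacle is precisely this bookkeeping: verifying that the graph produced by a suitable finite cascade of edge refinements is canonically isomorphic to the Barycentric refinement $A_1$, with the correct adjacency relation (two simplices adjacent iff one contains the other), and that the required edges are available at each stage so that Lemma B applies. One has to check that when a new vertex $e$ for an edge $(a,b)$ is inserted and joined to $a,b$ and $S(a)\cap S(b)$, the vertices of $S(a)\cap S(b)$ are exactly the original vertices $c$ such that $\{a,b,c\}$ is a triangle of $A$, so that $e$ acquires precisely the neighbors dictated by the containment order; and then that the later refinements of edges like $(c,e)$ introduce the triangle-vertices with the right links. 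A safe way to organize this is induction on $\dim(A)$: for $\dim(A)\le 1$ the Barycentric refinement is just the ordinary edge subdivision and the claim follows from iterating Lemma B directly; for the inductive step, after one round of edge refinements the star of each original vertex has been subdivided so that higher simplices now appear as smaller-dimensional configurations, and one applies the inductive hypothesis locally. Alternatively, and perhaps more economically, one can bypass the explicit identification by noting that $A_1$ is obtained from $A$ by a sequence of stellar subdivisions of faces in decreasing dimension, and that each stellar subdivision of a face $\sigma$ is realized by attaching a new vertex to the contractible subgraph $\overline{\sigma}\cup \mathrm{lk}(\sigma)$ — a unit-ball-type configuration — hence is a homotopy step by the same argument as in Lemma B. I would present whichever of these two formalizations keeps the face-ordering bookkeeping shortest.
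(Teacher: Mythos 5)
Your proposal takes essentially the same route as the paper: the paper's proof is a two-line appeal to Lemma~B, performing edge refinements organized by simplex dimension (it phrases the deformation in reverse, removing the vertices of $A_1$ corresponding to the highest-dimensional simplices first and working downward). Your write-up is in fact more careful than the paper's, since you explicitly flag and propose to close the bookkeeping step --- that the cascade of refinements really reproduces the containment adjacency of $A_1$ --- which the paper simply asserts.
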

\begin{proof} 
The deformation from $A$ to $A_1$ can be 
done by edge refinements (Lemma B). First remove the vertices belonging
to the highest dimensional simplices, then get to the next
smaller points and edge refine them, until only the vertices of $A$ 
are left as points.
\end{proof} 

\begin{figure}[!htpb]
\scalebox{1.0}{\includegraphics{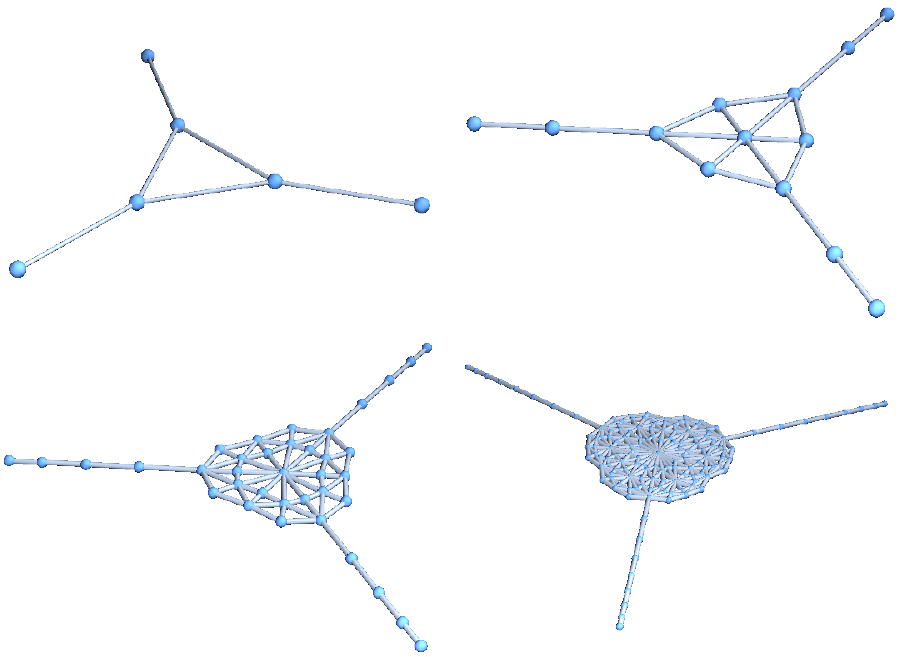}}
\label{Figure 1}
\caption{
we see three Barycentric refinements. Each Barycentric refinement
$A \to A_1$ is a homotopy but we can not contract $A_1$ to $A$ 
in general directly. We need to use both contractions and 
expansions to do that. 
}
\end{figure}

\section{Proof of Theorem 3}

\paragraph{}
Theorem 3 leaves finite combinatorics and looks at the continuum.
If $A=(V,E)$ is a finite simple graph with $|V|=n$ vertices we 
can embed it into $\mathbb{R}^{n}$. Put the vertices $x_k$ as
unit vectors $e_k$ in $\mathbb{R}^n$. Now connect each pair
$(a,b)$ of vertices which are connected by a line segment. 
Then look at all $3$-cliques $(a,b,c)$ of point triples which 
are all pairwise connected. This produces a concrete triangle
spanned by the points $e_a,e_b,e_c$ in $\mathbb{R}^n$. Go on 
like this with all $k$-cliques. The realization $\gamma(A)$
produces a compact subset of $\mathbb{R}^n$ equipped with
the Euclidean norm. We usually con realize a complex in much lower
dimension.

\paragraph{}
Two functions $f_0:X \to X$ and $f_1:X \to X$ are {\bf homotopic}
if there exists a continuous map $F: X \times [0,1] \to X$
such that $F(x,0)=f_0(x)$ and $F(x,1) = f_1(x)$. 
Two topological spaces $X,Y$ are {\bf homotopic} if there
is a pair of continuous maps $f:X \to Y$ and $g: Y \to X$ such
that $g \circ f: X \to X$ is homotopic to the identity map $I(x)=x$.

\paragraph{}
The construction of a geometric realization of two homotopic graphs $A$,$B$
produces two topological spaces $\gamma(A),\gamma(B)$ which are
classically homotopic. 

\paragraph{}
To perform the proof, one only has to check this for a single homotopy extension or its inverse. We proceed by induction.
Assume the graph $A$ has already been realized as $\gamma(A) \subset \mathbb{R}^n$. Now add a new vertex
by attaching it to a contractible subgraph $C$ of $A$. Take the larger space $\mathbb{R}^n \times \mathbb{R} = \mathbb{R}^{n+1}$
and place the new vertex there. Now build the pyramid over $\gamma(C)$. This produces an explicit embedding
of the larger graph. The homotopy $F_t(x,h) \to (x,t h)$ deforms the geometric realization of
$\gamma(A)$ to $\gamma(A+x)$.

\section{Proof of Theorem 4}

\paragraph{}
It is enough to prove that if $A'$ is a homotopy extension of $A$
and $B$ is fixed, then $A'  \cdot  B$ is a homotopy extension of $A  \cdot  B$. 
The general case can be done by two such steps, one for $A$ and one for $B$. \\
Let $x$ be a new vertex so that $A'= A +_C x$ with a contractible 
subgraph $C$ of $A$. If $V$ is the vertex set of $A$, let $W$ the vertex
set of $B$ and let $V' = V \cup \{x\}$ the vertex set of $A'$. 
The Cartesian product $V \times W$ is the vertex set of $A  \cdot  B$
and $V' \times W$ is the vertex set of $A'  \cdot  B$. \\
There are $m=|W|$ copies $x_1, \dots, x_m$ 
of the vertex $x$ in $A' \cdot  B$. The graph $A' \cdot  B$
is a $|W|$ fold homotopy extension of $A  \cdot  B$: start with $A  \cdot  B$, 
then add $x_1$ and attach it to the graph $U_1$ generated by the union 
of all sets $C_y = C \times \{y\} \subset A \cdot  B$ and vertices $x \in V(A)$ 
connected to $x_1$ and summing over all $y \in V(B)$ with $(x,y) \in E(B)$. \\
Continue like this until all $x_k$ are added. This works, because each 
$U_k$ is contractible.  At the end we have $A' \cdot  B$. 

\paragraph{}
The analogue result of Theorem 4 for addition (disjoint union) is clear. 
If $A$ is homotopic to $A'$ and $B$ is homotopic to $B'$ then the 
there is a deformation of $A+B$ to $A'+B'$. 

\section{Proof of Theorem 5}

\paragraph{}
If $G,H$ be finite abstract simplicial complexes and let $\psi(G),\psi(H)$
their connection graphs. To the product $G \times H$ belongs the
strong product graph $\psi(G \times H) = \psi(G) \cdot \psi(H)$. The vertices of this graph is
the Cartesian product $G \times H$ as a set of sets. Two points 
$(a,b)$ and $(c,d)$ in this product are connected if $a \cap c \neq \emptyset$
and $b \cap d \neq \emptyset$. Let $G_0$ denote part of the vertex set $G$
of $\psi(G)$ consisting of $0$-dimensional sets. 

\begin{lemma}
For any $G$, we have $i(\psi(G))=|G_0|$. 
\end{lemma}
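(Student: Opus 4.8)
The plan is to show two inequalities: $i(\psi(G)) \ge |G_0|$ and $i(\psi(G)) \le |G_0|$. The lower bound is immediate: the set $G_0$ of $0$-dimensional sets of $G$ consists of singletons, and two distinct singletons $\{a\}, \{b\}$ have empty intersection, hence are non-adjacent in $\psi(G)$. So $G_0$ is an independent set in $\psi(G)$, giving $i(\psi(G)) \ge |G_0|$.

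For the upper bound I would argue that any independent set $I$ in $\psi(G)$ has $|I| \le |G_0|$. The key observation is that elements of $I$ are pairwise disjoint sets in $G$ (pairwise non-intersecting simplices). Now I would build an injection from $I$ into $G_0$: to each $x \in I$, being a non-empty set in the complex $G$, it contains at least one $0$-dimensional vertex; since $G$ is a simplicial complex, every singleton subset of $x$ lies in $G$, so pick one such vertex $v(x) \in x$ with $\{v(x)\} \in G_0$. Because distinct $x, x' \in I$ are disjoint as sets, the chosen vertices $v(x), v(x')$ are distinct, so $x \mapsto \{v(x)\}$ is an injection $I \hookrightarrow G_0$. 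Hence $|I| \le |G_0|$, and taking $I$ a maximum independent set gives $i(\psi(G)) \le |G_0|$. Combining the two bounds yields $i(\psi(G)) = |G_0|$.

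I do not expect any serious obstacle here; the statement is essentially a packing argument exploiting the downward-closure axiom of simplicial complexes (every non-empty simplex contains a vertex, and disjoint simplices contain disjoint vertex sets). The only point requiring a little care is the direction $i(\psi(G)) \le |G_0|$: one must not confuse independent sets in $\psi(G)$ with independent sets in the Barycentric graph $\phi(G)$ — in $\psi(G)$ independence genuinely means pairwise set-disjointness, which is exactly what makes the vertex-selection map injective. It is worth noting that this lemma feeds into Theorem 5 together with the strong-product structure $\psi(G \times H) = \psi(G) \cdot \psi(H)$ from Theorem 4 and the multiplicativity of $f_0$, so that $\Theta(\psi(G)) = i(\psi(G)) = f_0(G)$ once one also exhibits a matching Lovász-type upper bound $\theta(\psi(G)) \le f_0(G)$; but that is beyond the present lemma.
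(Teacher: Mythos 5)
Your proof is correct and takes essentially the same route as the paper: the lower bound comes from observing that the $0$-dimensional sets are pairwise disjoint and hence independent in $\psi(G)$, and the upper bound exploits downward closure plus the pairwise disjointness of the members of an independent set. The only cosmetic difference is that you package the upper bound as an injection $I \hookrightarrow G_0$, while the paper phrases it as an exchange argument (replacing a positive-dimensional element of $I$ by its singleton vertices to obtain a strictly larger independent set); both rest on the identical combinatorial fact, and your version is if anything slightly more explicit.
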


\begin{proof}
The set $\{ x \in G,  {\rm dim}(x)=0 \}$ is an independent set of 
vertices in $\psi(G)$. The reason is that two zero dimensional sets
do not intersect. This shows $i(A) \leq |G|$. But assume we have an 
independent set $I$ which contains a positive dimensional vertex
$x=(a_1,\dots,a_k)$. But then we can replace $x$ with a larger
independent set $\{a_1, \dots, a_k \}$. 
\end{proof}

\paragraph{}
The independence property holds also for the product: 

\begin{lemma}
if $A=\psi(G),B=\psi(H)$ are connection graphs, then 
$i(A \cdot B) = i(A) i(B) = f_0(G) f_0(H)$. 
\end{lemma}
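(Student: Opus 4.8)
The plan is to reduce everything to the previous lemma, which already gives $i(A)=i(\psi(G))=|G_0|=f_0(G)$ and likewise $i(B)=f_0(H)$; it then suffices to prove $i(A\cdot B)=f_0(G)f_0(H)$, after which $i(A\cdot B)=i(A)i(B)$ is immediate. The strategy mirrors the proof of the previous lemma: exhibit an explicit large independent set in $A\cdot B$ for the lower bound, and run a ``break a simplex into its vertices'' exchange argument for the upper bound. The point worth keeping in mind is that for arbitrary graphs the independence number of a strong product can strictly exceed the product of the independence numbers (this is precisely the phenomenon Shannon capacity measures), so the argument must genuinely exploit that $A$ and $B$ are connection graphs of simplicial complexes, not merely abstract graphs.

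For the lower bound I would take $J=\{(\{p\},\{q\}) : \{p\}\in G_0,\ \{q\}\in H_0\}$, the set of pairs of $0$-dimensional cells, viewed as a subset of the vertex set $G\times H$ of $A\cdot B=\psi(G)\cdot\psi(H)$. Two distinct elements $(\{p\},\{q\})$ and $(\{p'\},\{q'\})$ of $J$ are adjacent in the strong product only if $\{p\}\cap\{p'\}\neq\emptyset$ and $\{q\}\cap\{q'\}\neq\emptyset$, i.e. only if $p=p'$ and $q=q'$; hence $J$ is independent and $i(A\cdot B)\ge |J|=f_0(G)f_0(H)$.

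For the upper bound let $I$ be a maximum independent set in $A\cdot B$ and suppose some $(a,b)\in I$ has $a=\{a_1,\dots,a_k\}$ with $k\ge 2$. I would replace $(a,b)$ by the $k$ pairs $(\{a_1\},b),\dots,(\{a_k\},b)$ and check that the resulting set $I'$ is still independent: the new pairs are pairwise non-adjacent because the singletons $\{a_i\}$ are pairwise disjoint; and for an old pair $(c,d)\in I\setminus\{(a,b)\}$, non-adjacency of $(a,b)$ and $(c,d)$ means $a\cap c=\emptyset$ or $b\cap d=\emptyset$, and in the first case $\{a_i\}\cap c=\emptyset$ for every $i$ since $a_i\in a$. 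Moreover none of the $(\{a_i\},b)$ already lies in $I$, since each is adjacent to $(a,b)\in I$ (they share $a_i$ in the first coordinate and all of $b$ in the second). Hence $|I'|=|I|-1+k>|I|$, contradicting maximality. Therefore every first coordinate occurring in a maximum independent set is $0$-dimensional, and by the symmetric argument applied to the second coordinate every second coordinate is too, so $I\subseteq J$ and $i(A\cdot B)\le f_0(G)f_0(H)$.

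Combining the two bounds yields $i(A\cdot B)=f_0(G)f_0(H)=i(A)i(B)$. I expect no real obstacle; the only part requiring care is the bookkeeping in the exchange step — verifying simultaneously that the replacement preserves independence against every third vertex and that it strictly enlarges the set — and this is exactly the place where the simplicial structure of $G$ and $H$ enters, just as in the proof of the previous lemma.
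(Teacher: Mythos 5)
Your proof is correct and takes essentially the same approach as the paper: an explicit independent set of pairs of $0$-dimensional cells for the lower bound, and a replace-a-positive-dimensional-coordinate-by-its-vertices exchange argument for the upper bound. The paper states this exchange step in two sentences; your version supplies the details it glosses over (that the replacement vertices are not already in the set, and that independence against third vertices persists), which is exactly the right bookkeeping.
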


\begin{proof}
The $0$-dimensional parts $G \times H$ are the points $(x,y)$,
where both $x,y$ are zero dimensional. There are $f_0(G) f_0(H)$
such sets  in $G \times H$ and they form an independent set in 
$\psi(G \times H)$. For the same reason than in the previous lemma,
we can not have an independent set containing one positive dimensional
point because we could just replace it with the collection of its 
zero dimensional parts which are all independent of each other. 
\end{proof} 

\paragraph{}
We can write the Shannon capacity as
$$ \Theta(A) = \lim_{n \to \infty} (i(A^n))^{1/n} \; . $$
It is bound below by $i(A)$ and bound above by the {\bf Lovasz number}
$\theta(A)$. 

\paragraph{}
As part of the {\bf sandwich theorem}, we have have $i(G) \leq \Theta(G) \leq \theta(G)$. 
In our case, we can see $\Theta(\psi(G)) \geq f_0(G) = i(G)$ also
because the zero-dimensional parts remain also in the product an independent set.
But also in the product $\psi(G^k)$ if we have a product point $(a_1,\dots,a_k)$
in the independent set, where some $a_j$ is not zero dimensional, then we can
replace this point by individual points and so increase the independence number.
This shows that $\Theta(\phi(G))=\Theta(\psi(G))=f_0(G)$.
Applying the previous lemma again and again, we have $i(G^n) = f_0(G)^n$
so that $\Theta(G)=f_0(G)=m$. \\

\paragraph{}
An elegant proof uses the {\bf Lovasz umbrella construction} which attaches 
to every vertex $x$ in the graph a unit vector $u(x)$ such that 
the dot product $\langle u(x) \cdot u(y) \rangle=0$ if $(x,y)$ is not an edge. In our
case, this means that the vectors $u(x),u(y)$ are orthogonal if $x \cap y = \emptyset$. 
The explicit {\bf Lovasz representation} in $\mathbb{R}^{f_0}$ is
$$ u_j(x) = \left\{ \begin{array}{lr} 1 , & j \in x \\
                                      0 , & {\rm else} \end{array} \right. \; . $$
It is clear that $u(x) \cdot u(y) =0$ if $x \cap y = \emptyset$. 
The {\bf stick of the Umbrella} is the vector $c=[1,1,\dots,1]/\sqrt{f_0}$. 
The Lovasz number is bound above by any value
$$  {\rm max}_{x \in G} (u(x) \cdot c)^{-2} \;  $$
of a choice of a Lovasz umbrella and stick (it is the $\sec^2(\alpha)$ of the maximal
opening angle of the umbrella) and for our Lovasz umbrella equal to $f_0$. 
So, we know $\theta(\psi(G)) \leq f_0$. But we also have the lower bound $i(\psi(G)) = f_0$
so that $\Theta(\psi(G)) = f_0$. 

\paragraph{}
We have $\Theta(\phi(G)) \geq f_0(G)$:
since $\phi(G)$ is a subgraph of $\psi(G)$, we 
have $\Theta(\phi(G)) \geq \Theta(\psi(G)) = f_0(G)$.
We have $\Theta(\psi(G)) = f_0(G)$ we have in general 
$\Theta(\phi(G)) < \theta(\phi(G))$.

\paragraph{}
In some cases, we can get an umbrella for $\phi(G)$ which lives in $\mathbb{R}^{f_0(G)}$ and so
get $\Theta(\phi(G)) = f_0(G)$. For $G=K_3$, where $\phi(G)$ is a wheel graph with 6 spikes, 
we have $i(G)=f_0(G)=3$. The umbrella in $\mathbb{R}^3$ has the stick $[1,1,1]/\sqrt{3}$ and
assigns vectors $u(x) = {\rm min}_{v \in x} e_v$. The is the standard basis. In general for 
the graph $C_{2n}$ or its pyramid extension $W_{2n}$, the wheel graph with boundary $C_{2n}$, 
we have the Lovasz system $u( \{1\}) = u( \{1,2\} ) = e_1$, $u(\{2\}) = u(\{2,3\} ) = e_2$
etc $u( \{n\}) - u( \{n,1\} ) = e_n$ in the $C_{2n}$ case and 
$u( \{1\}) = u( \{1,2\} ) = u( \{1,2,n+1 \})  = e_1$, $u( \{2\}) = u( \{2,3\} ) = u( \{2,3,n+1 \}) = e_2$
etc $u( \{n\}) - u( \{n,1\} ) = u( \{n,1,n+1 \}) = e_n$ in the wheel graph 
case. Now $W_{6} = \phi(K_3)$. 

\paragraph{}
Here is an example of $\phi(G)$ without a Lovasz umbrella. For the Barycentric refinement of the 
{\bf figure $8$ complex} 
$$  G=\{ \{1,2,3,4,5,6,7,(1,2),(2,3),(3,4),(4,1),(4,5),(5,6),(6,7),(7,4) \}) $$ 
which is a genus 
$b_1=2$ complex with Euler characteristic $b_0-b_1=-1$ (equivalently by Euler-Poincar\'e $f_0-f_1=7-8=-1$), we would
have to attach to every edge a unit vector and since none of the edges connect in the connection graph, we 
would need so $8$ pairwise perpendicular vectors. This can not be done in $\mathbb{R}^{f_0}$. We see that
$\theta(\phi(G)) > 7$. Shannon computed the capacity for all graphs with $\leq 6$ vertices except for $C_5$,
where Lovasz eventually established $\sqrt{5}$. Here is a general observation:

\paragraph{}
If $G$ is a one-dimensional simplicial complex with $f_0$ vertices and $f_1$ edges,
then the graph $\phi(G)$ has the Shannon capacity ${\rm max}(f_0,f_1)$ because
both the edge set as well as the vertex set is an independent set in $\phi(G)$. 
For the figure 8 complex above we have $\Theta(\phi(G))=8$ and $\Theta(\psi(G))=7$. 
When looking at bouquets of circles, we can arbitrary large differences between $\Theta(\phi(G))$
and $\Theta(\psi(G))$.  

\begin{figure}[!htpb]
\scalebox{1.0}{\includegraphics{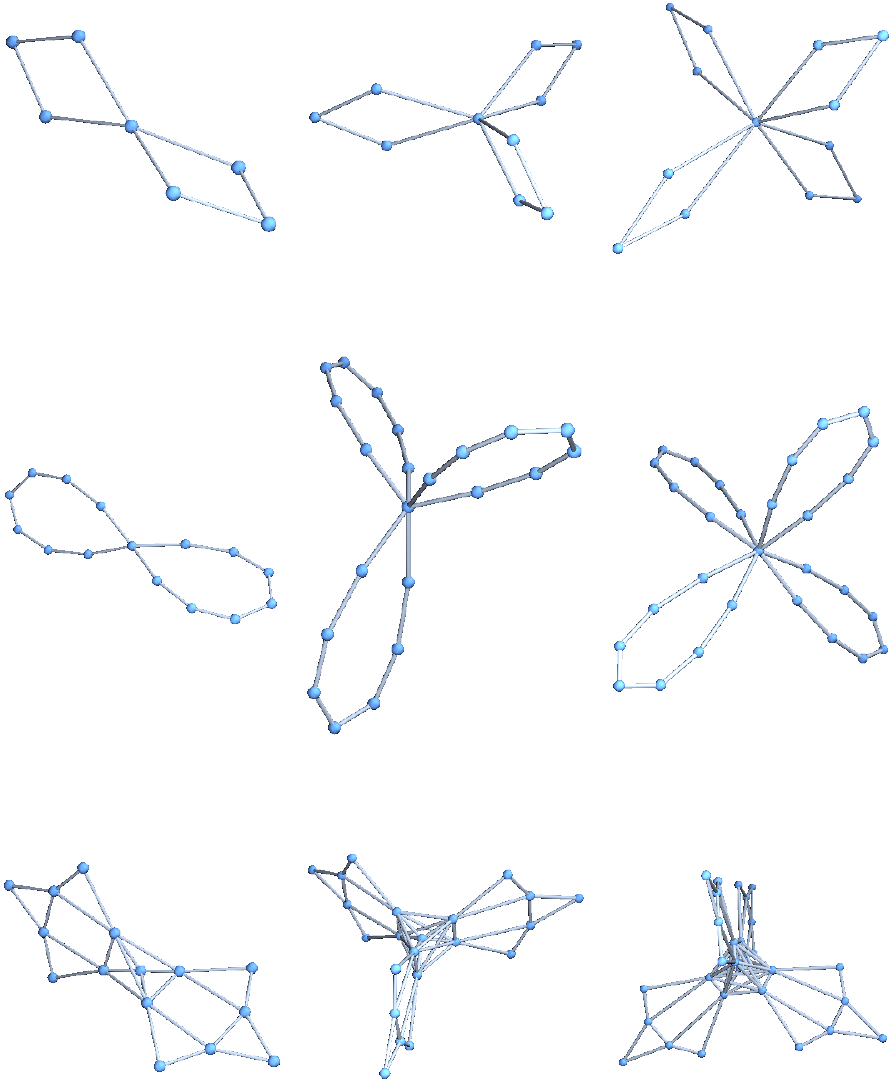}}
\label{Figure 16}
\caption{
Bouquet complexes $G_2,G_3,G_4$ of genus $k=2,3,4$ its Barycentric 
graphs $\phi(G_k)$ and its connection graphs $\psi(G_k)$. 
We have $\Theta(\phi(G_2))=f_1(G_2)=8, \Theta(\psi(G_2))=f_0(G_2)=7$,
and $\Theta(\phi(G_3))=f_1(G_3)=12, \Theta(\psi(G_3)=f_0(G_3)=10$,
and $\Theta(\phi(G_4))=f_1(G_3)=16, \Theta(\psi(G_4)=f_0(G_3)=13$,
}
\end{figure}

\section{Homotopy game}

\paragraph{}
The concept of homotopy for graphs produces
puzzles: give two graphs which are homotopic and solve the task 
to find a concrete set of homotopy steps which move one to the other. A simple
example is to deform $C_5$ to $C_6$ or to deform the octahedron to the icosahedron. 
In general, one can not go with homotopy reduction steps alone but needs
first to enlarge the dimension. The reason is simple: for a discrete manifold, each 
unit sphere is sphere and not homotopic to $1$. One can not apply a single homotopy
contraction step on a discrete manifold. One can do more complicated moves like edge
reductions, covered in Lemma B.

\begin{figure}[!htpb]
\scalebox{1.0}{\includegraphics{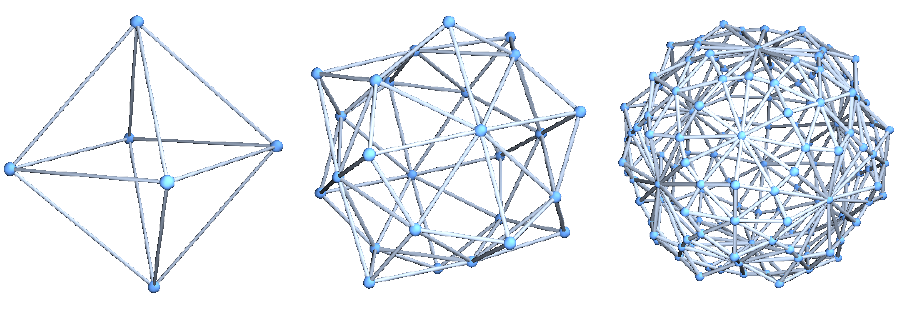}}
\label{Figure 5}
\caption{
The octahedron graph and the first two Barycentric refinements.
}
\end{figure}

\begin{figure}[!htpb]
\scalebox{1.0}{\includegraphics{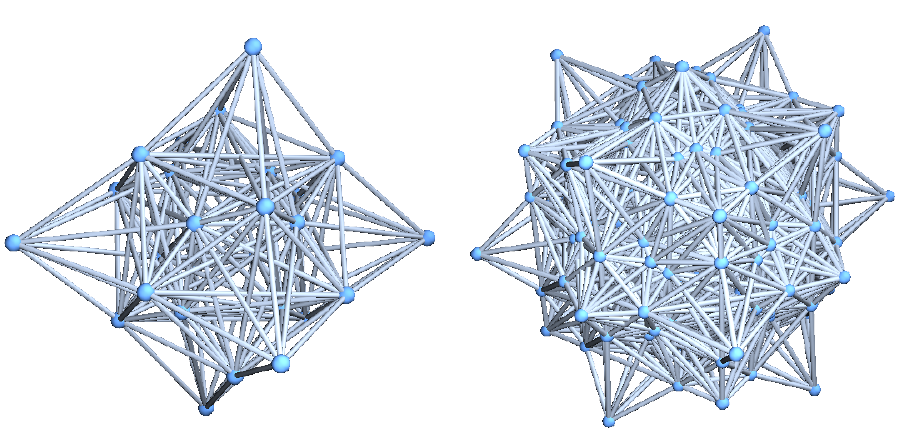}}
\label{Figure 6}
\caption{
The connection graphs of the octahedron graph $O$ and the first
Barycentric refinement complex $O_1$. While $\psi(O)$ is homotopic to a $3$-sphere
(we see this by brute force computing the Betti numbers $(1,0,0,1)$),
the graph $\psi(O_1)$ is homotopic to the $2$-sphere with Betti numbers $(1,0,1)$. 
}
\end{figure}

\paragraph{}
The homotopy puzzle leads also to 2-player games: player Ava
proposes a graph, player Bob must reduce it to a point. If Bob succeeds, he
wins. If player Bob does not succeed, Ava has to reduce it to a point. If
she does not succeed, player Bob wins. In the next round, the roles of player Ava
and Bob are reversed. The game has a creative aspect and has the advantage that
it can be played on any level: two small kids can be challenged similarly
as two graph theory specialists. 
One can play more advanced versions by giving two homotopic graphs and ask
the opponent to deform one into an other. For example, one can ask to deform 
an octahedron to an icosahedron.  

\begin{figure}[!htpb]
\scalebox{1.0}{\includegraphics{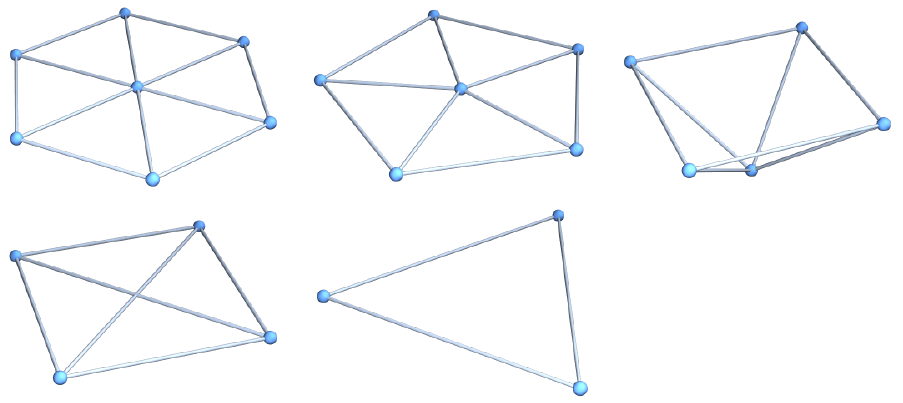}}
\label{Figure 7}
\caption{
Barycentric refinement is a homotopy deformation. The 
refinement of a triangle can be done with first expanding to a 
tetrahedron, then do three edge refinements.
}
\end{figure}

\section{General remarks}

\paragraph{}
{\bf Categorical view}.
Simplicial complexes form a category in which the complexes are
the objects and the order preserving maps are the morphisms. 
A {\bf partial order} on a complex is given by incidence $x \subset y$ of its sets. 
The sets in $G$ are called simplices or faces. The complete complex 
$G=K_{n+1}=[0,\dots,n]$ can naturally be identified with the n-dimensional face
$x=\{ 0,\dots,n \}$ which is a facet in $G$, a largest element in the partial order.
Also finite simple graphs for a category in which the graph homomorphisms are
the morphisms: $f:(V,E) \to (W,F)$ is a homomorphism if $f(V)=W$ and if $(a,b) \in E$,
then $(f(a),f(b)) in F$. One often sees graphs as one-dimensional simplicial complexes.
A more natural functor is to assigns to a graph its Whitney complex.

\paragraph{}
{\bf The continuum}.
We usually do not look at the geometric realization functor $\gamma$ to topological spaces
because this leaves combinatorics and requires stronger axiom systems.
It can not hurt to compare classical homotopy with discrete graph homotopy,
in particular because many textbooks treat graphs as one-dimensional simplicial complexes.
Mathematicians like Euler, Poincar\'e or Alexandroff \cite{alexandroff} considered graphs
building higher dimensional structures (Geruest stands for scaffold). 
Seeing them as 1-dimensional skeleton simplicial complexes came later.
Later, (like in \cite{Ivashchenko1993}) the language of graph theory
was again recognized as valuable for low dimensional topology. 
It is close to \cite{WhiteheadI} but when simplified as in \cite{CYY},
the notion becomes even more accessible. 

\paragraph{}
{\bf Genus spectrum}.
We have seen in a \cite{Spheregeometry} that Barycentric refinements ``smooth out" pathologies.
We had seen there that the set of possible genus values $\{ 1-\chi(S(x)) \}_{x \in G}$ 
is stable after one Barycentric refinement. It is therefore a combinatorial invariant of $G$,
(which by definition is an invariant which does not change any more when doing 
refinements). For discrete even dimensional manifolds the genus spectrum is $\{ 1 \}$
and for odd-dimensional manifolds, the sphere spectrum is $\{ -1 \}$. The sphere
spectrum can be more interesting for discrete varieties. For the figure $8$ complex for
example, it is $\{ -1,-3 \}$. We have seen here an other instance where Barycentric refinement
makes Shannon capacity computable. 

\paragraph{}
{\bf Riemann Hurwitz}.
Barycentric refined complexes are also needed when looking at a Riemann-Hurwitz
theorem. If $G$ is a finite simple graph which is a Barycentric refinement
and $A$ is a finite group of order $n$ acting by automorphisms, then $\phi(G)$ is a 
ramified cover over the graph $H=G/A$.
The Riemann-Hurwitz formula is then $\chi(G) = n \chi(G/A) - \sum_{x \in G} e_x$, where
$e_x=\sum_{a \neq 1, a(x)=x} (-1)^{{\rm dim}(x)}$ is a {\bf ramification index}. This can be derived
from \cite{brouwergraph}. The graph $G$ is a cover of $G/A$. This cover is {\bf unramified} 
if the ramification indices $e_x$ are all zero everywhere. 
In this case, the cover $G \to H$ is a discrete {\bf fibre bundle} with structure group $A$.

\paragraph{}
{\bf Finitism}. 
Given any laboratory X, we can only do finitely many experiments
each only accessing finitely many data. Every measurement defines
a partition of X into sets of experiments which can not be distinguished
by f. A finite set of functions generates a simplicial complex G if the 
smallest sets are collapsed to point. 
One can see this also as follows. Let $(X,d)$ is a compact metric space modelling the 
laboratory. Take $\epsilon>0$ and take a finite set $V$ of points such that every $x \in X$
is $\epsilon$ close to a point in $V$. A non-empty set of points is a simplex if all
points are pairwise closer than $\epsilon$. The set of all simplices is a
simplicial complex $G$. 

\paragraph{}
{\bf $\pi$-Systems}.
One can also look at the category of set of non-empty sets which are closed under the operation 
of non-empty intersection. Any such structure $G$ is homomorphic to a simplicial complex: 
just collapse every atom to a point and remove any element which does not effect
the order relation. If we allow also also the empty set, then we have a $\pi$-system
a set of sets which is closed under the operation of taking finite intersections. 
A $\pi$-system without empty set can be generalized to a {\bf filter base} in which the 
intersection of two elements must contain an element in the set. 
Every $\pi$-system is isomorphic to a simplicial complex but the super symmetry structure
given by the sign of the cardinality is not compatible. It produces inconsistent Euler
characteristic for example. Euler characteristic is an invariant under isomorphisms of simplicial
complexes but not an isomorphism invariant for $\pi$-systems. 

\paragraph{}
{\bf The SM theorem}.
By the Szpilrajn-Marczewski theorem \cite{Szipilrajn-Marczewski}, every graph 
can be represented as a connection graph of a set of sets. This theorem is also abbreviated as 
SM theorem in intersection graph theory. Edward Szpilrajn-Marcewski (1907-1976) proved this
in 1945. The theorem has been improved and placed into extremal graph theory by
Erd\"os, Goodman and Posa who showed in 1964 \cite{ErdoesGoodmanPosa}
that one can realize any
graph of $n$ vertices as a set of subsets of a set $V$ with $[n^2/4]$ elements and that
for $n \geq 4$, the set of sets can be all different.

\paragraph{}
{\bf Spectral question}.
We still do not know whether 
the spectrum of the connection Laplacian determines the complex $G$.
We know that the number of positive eigenvalues is the number of
even dimensional simplices. We know also that the spectrum $\sigma(L(\psi(G \times H)))$
of the connection Laplacian of $\psi(G \times H)$ has eigenvalues $\lambda_j \mu_k$
where $\lambda_j$ are the eigenvalues of $\psi(G)$ and $\mu(k)$ are the eigenvalues
of $\psi(H)$. The spectrum of connection Laplacians could contain more
topological information.

\paragraph{}
{\bf Characterize SM graphs}.
Connection graphs are special as they can be realized by on sets with $n$ elements
This begs for the question which graphs with n vertices have the property that they can
be realized with a set $G$ of subsets of a set with $n$ elements. 
The non-simplicial complex example $G=\{\{1,2\},\{2,3\},\{3,1\}\}$ with three sets
belongs to a graph $C_3$ that can be realized on a set with $n=3$ atoms. 

\section*{Appendix: Euler's Gem}

\paragraph{}
In this appendix, we give a combinatorial proof of the Euler gem formula telling that a 
$d$-sphere has Euler characteristic $1+(-1)^d$. We also classify {\bf Platonic d-spheres}.
The first part of this document was handed out on February 6, 2018 at
a Math table talk ``Polishing Euler's gem", a day before Euler's day 2/7/18. The section
is pretty independent of the previous part and added because it gives more details about 
homotopy and should be archived somewhere. 

\paragraph{}
A {\bf finite simple graph} $G=(V,E)$ consists of two finite sets, the {\bf vertex set} 
$V$ and the {\bf edge set} $E$ which is a subset of all sets $e=\{a,b\} \subset V$ with
cardinality two. A graph is also called a {\bf network}, the vertices are the {\bf nodes} and
the edges are the {\bf connections}. A subset $W$ of $V$ {\bf generates} a subgraph $(W,F)$ 
of $G$, where $F=\{ \{a,b\} \in E \; | \; a,b \in W \}$. 
Given $G$ and $x \in V$, its {\bf unit sphere} is the sub graph generated by
$S(x) = \{ y \in V | \{x,v\} \in E \}$. The {\bf unit ball} is the sub graph generated by
$B(x) = \{ x\} \cup S(x)$. Given a vertex $x \in V$, the graph $G-x$ {\bf with $x$ removed}
is generated by $V \setminus \{x\}$. We can identify $W \subset V$ with the subgraph
it generates in $G$.  

\paragraph{}
The empty graph $0=(\emptyset,\emptyset)$ is the {\bf $(-1)$-sphere}.
The $1$-point graph $1=(\{1\},\emptyset)=K_1$ is the smallest contractible graph.
Inductively, a graph $G$ is {\bf contractible}, if it is either $1$ or if 
there exists $x \in V$ such that both $G-x$ and $S(x)$ are contractible. As seen by induction,
all {\bf complete graphs} $K_n$ and all trees are contractible. Complete subgraphs are also called
{\bf simplices}. Inductively a graph $G$ is called a {\bf $d$-sphere}, if it is either $0$ or
if every $S(x)$ is a $(d-1)$-sphere and if there exists a vertex $x$ such that $G-x$ is contractible. 

\paragraph{}
Let $f_k$ denote the number of complete subgraphs $K_{k+1}$
of $G$. The vector $(v_0,v_1, \dots)$ is the {\bf $f$-vector} of $G$ and 
$\chi(G)=v_0-v_1+v_2- \dots $ is the {\bf Euler characteristic} of $G$. 
Here is Euler's gem: 

\begin{thm}
If $G$ is a $d$-sphere, then $\chi(G) = 1+(-1)^d$. 
\end{thm}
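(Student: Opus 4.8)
The plan is to prove Euler's gem formula $\chi(G) = 1 + (-1)^d$ for a $d$-sphere $G$ by induction on $d$, using the inductive structure of spheres and the valuation property of $\chi$ that was already recorded in the discussion after Theorem 2, namely $\chi(B +_A v) = \chi(B) + (1 - \chi(A))$. The base case is $d = -1$: the empty graph $0$ has $f$-vector $(0,0,\dots)$, so $\chi(0) = 0 = 1 + (-1)^{-1}$, which checks out. For the inductive step, assume the formula holds for all spheres of dimension less than $d$, and let $G$ be a $d$-sphere.

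The key idea is to reduce $G$ to the one-point graph by repeatedly removing vertices whose unit spheres are contractible, while tracking how $\chi$ changes, and then relate this to an auxiliary computation on a distinguished vertex whose unit sphere is a $(d-1)$-sphere. First I would use the definition of a $d$-sphere: there exists a vertex $x$ with $G - x$ contractible, and $S(x)$ is a $(d-1)$-sphere. Since $G - x$ is contractible, $\chi(G - x) = 1$. Now $G = (G - x) +_{S(x)} x$, because the neighbors of $x$ in $G$ are exactly $S(x)$, and $S(x)$ sits inside $G - x$. Applying the valuation formula, $\chi(G) = \chi(G - x) + (1 - \chi(S(x))) = 1 + 1 - \chi(S(x)) = 2 - \chi(S(x))$. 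By the inductive hypothesis applied to the $(d-1)$-sphere $S(x)$, we get $\chi(S(x)) = 1 + (-1)^{d-1}$, so $\chi(G) = 2 - (1 + (-1)^{d-1}) = 1 - (-1)^{d-1} = 1 + (-1)^d$, completing the induction.

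The main obstacle, and the thing that needs care rather than computation, is justifying that $G = (G-x) +_{S(x)} x$ really is an instance of the construction $B +_A v$ to which the valuation identity $\chi(B +_A v) = \chi(B) + (1-\chi(A))$ applies — that is, confirming that $S(x)$ is genuinely a subgraph of $G - x$ (true, since $x \notin S(x)$ as there are no self-loops, and all neighbors of $x$ survive in $G - x$) and that reattaching $x$ to all of $S(x)$ reconstructs $G$ exactly (true, since in a simple graph the edges at $x$ are determined by $S(x)$ and $G - x$ already contains all edges not incident to $x$). A secondary subtlety is that the valuation identity as stated was derived for attaching to a \emph{contractible} subgraph, but the underlying formula $\chi(B +_A v) = \chi(B) + (1 - \chi(A))$ holds for \emph{any} subgraph $A$ — this is immediate from the valuation property $\chi(P \cup Q) = \chi(P) + \chi(Q) - \chi(P \cap Q)$ applied with $P = B$ and $Q$ the cone (unit ball) over $A$, whose Euler characteristic is $1$ and whose intersection with $B$ is $A$. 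Once that bookkeeping is pinned down, the induction runs cleanly and no further estimates are needed.
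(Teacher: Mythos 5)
Your proof is correct and follows essentially the same route as the paper: both induct on $d$, pick the vertex $x$ with $G-x$ contractible (so $\chi(G-x)=1$), and apply the valuation identity to the decomposition $G=(G-x)\cup B(x)$ with $(G-x)\cap B(x)=S(x)$ and $\chi(B(x))=1$, then invoke the inductive hypothesis on the $(d-1)$-sphere $S(x)$. The only cosmetic difference is that you phrase the decomposition via the attachment formula $\chi(B+_A v)=\chi(B)+(1-\chi(A))$ rather than writing out $\chi(G)=\chi(G-x)+\chi(B(x))-\chi(S(x))$ directly, which is the same identity.
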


\paragraph{}
To prove this, we formulate three lemmas. Given two subgraphs $A,B$ of $G$, the 
{\bf intersection} $A \cap B$ as well as the {\bf union} $A \cup B$ are sub-graphs of $G$.

\begin{lemma}
$\chi(A)+\chi(B)=\chi(A \cap B) + \chi(A \cup B)$ for any two subgraphs $A,B$ of $G$. 
\end{lemma}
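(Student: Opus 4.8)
The plan is to reduce the identity to elementary inclusion--exclusion for a finite signed point measure. For a subgraph $H$ of $G$ let $\mathcal{H}$ denote its set of complete subgraphs (simplices), weighted by $w(\sigma)=(-1)^{{\rm dim}(\sigma)}$, so that $\chi(H)=\sum_{\sigma\in\mathcal{H}} w(\sigma)$. On the finite set $\mathcal{G}$ of all simplices of $G$, the indicator functions obey $\mathbf{1}_{\mathcal{A}\cup\mathcal{B}}+\mathbf{1}_{\mathcal{A}\cap\mathcal{B}}=\mathbf{1}_{\mathcal{A}}+\mathbf{1}_{\mathcal{B}}$ pointwise; multiplying by $w$ and summing over $\mathcal{G}$ gives $\chi(A\cup B)+\chi(A\cap B)=\chi(A)+\chi(B)$, provided the simplices of $A\cup B$ form exactly the set $\mathcal{A}\cup\mathcal{B}$ and those of $A\cap B$ exactly $\mathcal{A}\cap\mathcal{B}$. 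Thus the entire content of the lemma is that passing to the clique complex commutes with union and intersection of subgraphs.

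To verify these two set identities I would use that $A,B$ are here the subgraphs generated by vertex sets $W_A,W_B\subseteq V$, so that $A\cup B$ is generated by $W_A\cup W_B$ and $A\cap B$ by $W_A\cap W_B$. The intersection case is immediate: a set of vertices is a simplex of $A\cap B$ iff it lies in $W_A\cap W_B$ and is a $G$-clique, iff it is a simplex of both $A$ and $B$. For the union, one inclusion is trivial; for the other, take a simplex $\sigma$ of $A\cup B$ and suppose $\sigma\not\subseteq W_A$ and $\sigma\not\subseteq W_B$. Then there are $a,b\in\sigma$ with $a\in W_B\setminus W_A$ and $b\in W_A\setminus W_B$, and the edge $\{a,b\}$, being an edge of $A\cup B$, must lie within $W_A$ or within $W_B$ --- forcing $a\in W_A$ or $b\in W_B$, a contradiction in either case. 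Hence $\sigma\subseteq W_A$ or $\sigma\subseteq W_B$, so $\sigma$ is a simplex of $A$ or of $B$. After this, the weighted summation is routine.

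The main obstacle is precisely this last step: ruling out a ``new'' simplex of $A\cup B$ present in neither $A$ nor $B$ --- for a general non-induced pair of subgraphs this does fail, which is why the generated-subgraph convention is essential. If one wanted to sidestep that convention, a clean alternative is induction on $|W_A\setminus W_B|$: when it is $0$ we have $A\subseteq B$ and the identity is trivial, and otherwise one peels off a single vertex $x\in W_A\setminus W_B$, applies the inductive hypothesis to the pair $(A-x,B)$, and adds the contribution of $x$ using the one-vertex case $\chi(B +_A v)=\chi(B)+(1-\chi(A))$ recalled earlier (Poincar\'e--Hopf at a maximal vertex, with $B=H-x$ and $A$ the unit sphere of $x$ inside $H$). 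In all of this the ambient graph $G$ serves only to supply the finite universe of simplices over which the sums range.
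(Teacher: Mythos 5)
Your proof is correct and follows the same route as the paper's: both reduce the identity to the statement that the simplex-counting functions $f_k$ are valuations on subgraphs and then use that $\chi$ is a (signed) linear combination of the $f_k$. The paper's proof merely asserts that each $f_k$ satisfies the identity, whereas you supply the one genuinely nontrivial ingredient --- that for subgraphs generated by vertex sets every clique of $A\cup B$ is already a clique of $A$ or of $B$ (and dually for $A\cap B$) --- together with the correct observation that this, and hence the lemma, can fail for non-induced subgraphs; this fills in exactly the step the paper's one-line argument leaves implicit.
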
 
\begin{proof} 
Each of the functions $f_k(A)$ counting the number of $k$-dimensional simplices
in a subgraph $A$ satisfies the identity. The Euler characteristic $\chi(G)$ 
is a linear combination of such valuations $f_k(G)$ and therefore satisfies the identity.
\end{proof} 

\paragraph{}
A graph $G$ is a {\bf unit ball}, if there exists a vertex $x$ in $G$ such that 
the graph generated by all points in distance $\leq 1$ from $x$ is $G$. We write 
$B(x)$ and rephrase it that it is a {\bf cone extension} of the unit sphere $S(x)$.
Algebraically, one can say $B(x) = S(x) + 1$ where $+$ is the join addition. 

\begin{lemma}
Every unit ball $B$ is contractible and has $\chi(B)=1$. 
\end{lemma}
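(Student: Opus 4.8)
The plan is to show that a unit ball $B = B(x)$ is contractible by exhibiting $x$ itself as the vertex whose removal leaves a contractible graph and whose unit sphere inside $B$ is contractible. First I would observe that inside $B$, the unit sphere of $x$ is exactly $S(x)$, the original unit sphere, which is a legitimate subgraph of $B$. The crucial point is that $B - x$ is just the graph generated by $S(x)$, i.e. $B - x = S(x)$. So to apply the recursive definition of contractibility directly at the vertex $x$, I need \emph{both} $S(x)$ contractible \emph{and} $B - x = S(x)$ contractible — these are the same condition, namely that $S(x)$ is contractible.

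This reveals that the lemma as literally stated cannot be proved by this one-line argument without a hypothesis on $S(x)$: an arbitrary unit ball over an arbitrary sphere need not be contractible unless we know $S(x)$ is nice. The natural fix, and what I expect the author intends, is that "unit ball" here means the cone over a \emph{contractible} sphere, or that the statement is being used in the context where $S(x)$ has already been shown contractible (e.g. the inductive step in a build-up). So the honest plan is: assume $S(x)$ is contractible (which is the standing hypothesis whenever unit balls arise in the Euler-gem argument, since one builds up the sphere cell by cell and each newly-coned sphere is contractible by the inductive hypothesis). Then $B(x) - x = S(x)$ is contractible and $S_B(x) = S(x)$ is contractible, so by the recursive definition $B(x)$ is contractible.

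For the Euler characteristic claim $\chi(B) = 1$, I would use the cone decomposition $B(x) = S(x) + 1$ together with the additive formula for Euler characteristic under coning. Concretely, every simplex of $B$ either lies in $S(x)$ or contains $x$; the simplices containing $x$ are in bijection with simplices of $S(x)$ together with the empty simplex, shifted up by one dimension. Hence $\chi(B) = \chi(S(x)) + (1 - \chi(S(x)))\cdot(-1)\cdot(-1)$ — more cleanly, $\chi(B) = \chi(S(x)) + \bigl(1 - \chi(S(x))\bigr) = 1$, using that attaching a cone point to a subgraph $A$ changes $\chi$ by $1 - \chi(A)$, which is precisely the formula $\chi(B +_A v) = \chi(B) + (1 - \chi(A))$ quoted earlier in the paper applied with the whole of $S(x)$ as the attaching set and the previous "$B$" being empty. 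Since $S(x)$ is contractible, $\chi(S(x)) = 1$ and this gives $\chi(B) = 1$ directly; but in fact the computation $\chi(B) = \chi(S(x)) + 1 - \chi(S(x)) = 1$ holds regardless, which is the cleaner route.

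The main obstacle is conceptual rather than computational: pinning down exactly what "unit ball" is allowed to range over. If it truly means "the unit ball $B(x)$ of \emph{some} vertex $x$ in \emph{some} graph $G$", then the contractibility half is false in general (take $G$ to be the suspension of a non-contractible sphere), so I would either (a) restrict to unit balls arising as cones over contractible spheres, or (b) note that in the proof of Euler's gem the unit balls that appear are always cones over spheres that the induction has already certified contractible. I would therefore phrase the proof under assumption (b) and flag that the contractibility assertion rests on $S(x)$ being contractible, while the identity $\chi(B(x)) = 1$ needs nothing beyond the coning formula for Euler characteristic and is unconditional.
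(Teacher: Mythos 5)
There is a genuine gap here: you have misread the strength of the statement and, as a consequence, proposed to weaken a lemma that is true exactly as written. In this paper a \emph{unit ball} is a graph $B$ containing a vertex $x$ adjacent to \emph{every} other vertex, so $B=B(x)$ is a cone with apex $x$ over an arbitrary graph $S(x)$; no hypothesis on $S(x)$ is needed, because the cone over \emph{any} graph is contractible. Your proposed counterexample, the suspension of a non-contractible graph, is not a unit ball: in a suspension $H+\{a,b\}$ the two cone points $a$ and $b$ are not adjacent to each other, so no vertex dominates the whole graph. The source of the trouble is your choice of which vertex to delete. Removing the apex $x$ does require $S(x)$ to be contractible, which is not given; but nothing forces you to remove $x$. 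The paper instead removes a rim vertex $y\in S(x)$ and inducts on the number of vertices: the unit sphere of $y$ inside $B(x)$ is $\{x\}+S_{S(x)}(y)$, again a cone and hence a smaller unit ball, and $B(x)-y=\{x\}+(S(x)-y)$ is also a smaller unit ball, so both are contractible by induction, whence $B(x)$ is contractible. (The base case is $K_1$, or more generally the degenerate case where $B(y)=B(x)$ for every $y$, which forces $B(x)=K_n$.) Your fallback option, that in the Euler-gem argument the spheres $S(x)$ have already been certified contractible, is also false: there the $S(x)$ are $d$-spheres, which are never contractible ($\chi=1+(-1)^d\neq 1$), so the weakened lemma you propose would be useless precisely where the paper needs it, namely to conclude $\chi(B(x))=1$ and contractibility of balls over non-contractible unit spheres.

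Your Euler characteristic computation, on the other hand, is correct and needs no repair: $\chi(B(x))=\chi(S(x))+\bigl(1-\chi(S(x))\bigr)=1$ holds for the cone over any graph by the valuation (coning) formula, with no contractibility assumption. The paper obtains the same value slightly differently, as a byproduct of the induction via $\chi(B(x))=\chi(B(x)-y)+\chi(B(y))-\chi(S(y))=1+1-1$, but your direct route to $\chi(B)=1$ is fine. Only the contractibility half of your argument needs to be replaced by the peripheral-vertex induction described above.
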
 
\begin{proof} 
Use induction with respect to the number of vertices in $B=B(x)$. It is true
for the one point graph $G=K_1$. Induction step: given a unit ball $B(x)$. Pick $y \in S(x)$ for which $B(y)$ is not
equal to $B(x)$ (if there is none, then $B(x)=K_n$ for some $n$ and $B(x)$ is contractible
with Euler characteristic $1$). Now, both $B(y), B(y)-x$ and $S(x)$ are smaller balls 
so that by induction, all are contractible with Euler characteristic $1$.
As both $B(x) \setminus y$ and $S(y)$ are contractible, also $B(x)$ is 
contractible. By the valuation formula, $\chi(B(x)) = \chi(B(x)-y) + \chi(B(y)) - \chi(S(x))=1+1-1=1$. 
\end{proof}

\begin{lemma}
If $G$ is contractible then $\chi(G)=1$.
\end{lemma}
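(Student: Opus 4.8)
The plan is to argue by induction on the number $|V|$ of vertices of $G$, using the valuation lemma $\chi(A)+\chi(B)=\chi(A\cap B)+\chi(A\cup B)$ together with the fact, already established, that every unit ball $B(x)$ has $\chi(B(x))=1$. For the base case, the only contractible graph on one vertex is $G=K_1$, and $\chi(K_1)=1$ since its $f$-vector is $(1,0,0,\dots)$.

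For the inductive step, suppose $G$ is contractible with $|V|\ge 2$. By definition there is a vertex $x$ such that both $G-x$ and the unit sphere $S(x)$ are contractible. I would then observe the decomposition $G=(G-x)\cup B(x)$: every vertex of $G$ is either different from $x$, hence lies in $G-x$, or equals $x$, hence lies in $B(x)$; and every edge of $G$ is either an edge of $G-x$ or is incident to $x$ and hence an edge of $B(x)$. Moreover $(G-x)\cap B(x)=S(x)$, since the common vertices are exactly the neighbours of $x$ and the induced structure on them is $S(x)$. Now $G-x$ has $|V|-1$ vertices and $S(x)$ has at most $|V|-1$ vertices, so the induction hypothesis gives $\chi(G-x)=1$ and $\chi(S(x))=1$; combined with $\chi(B(x))=1$ from the preceding lemma, the valuation identity applied in the ambient graph $G$ with $A=G-x$, $B=B(x)$ yields $\chi(G-x)+\chi(B(x))=\chi(S(x))+\chi(G)$, i.e.\ $1+1=1+\chi(G)$, so $\chi(G)=1$.

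There is essentially no serious obstacle here; the only points requiring a moment's care are that the two pieces $G-x$ and $S(x)$ genuinely have strictly fewer vertices so the induction hypothesis is available, and that no degenerate case arises: if $|V|\ge 2$ the witness $x$ cannot have $S(x)$ equal to the empty graph $0$, because the $(-1)$-sphere is not contractible, so $S(x)$ is a nonempty contractible graph and the induction applies to it as well. This lemma, together with the earlier fact $\chi(B+_A v)=\chi(B)+(1-\chi(A))$, is exactly what is needed to conclude that a homotopy step preserves Euler characteristic and hence feeds into the proof of Euler's gem.
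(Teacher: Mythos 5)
Your proof is correct and follows essentially the same route as the paper: induction on the number of vertices, the decomposition $G=(G-x)\cup B(x)$ with $(G-x)\cap B(x)=S(x)$, and the valuation identity together with $\chi(B(x))=1$. You are in fact somewhat more careful than the paper in spelling out the base case, the decomposition, and the non-degeneracy of $S(x)$.
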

\begin{proof}
Pick $x \in V$ for which $S(x)$ and $G-x$ are both contractible.
By induction, $\chi(G-x)=1$ and $\chi(S(x))=1$. By the unit ball lemma, $\chi(B(x))=1$. 
By the valuation lemma, $\chi(G)=\chi(B(x)) + \chi(G-x) - \chi(S(x))=1+1-1$. 
\end{proof} 

\paragraph{} 
Here is the proof of the Euler gem theorem. 

\begin{proof}
For $G=0$ we have $\chi(G)=0$. This is the induction assumption.
Assume the formula holds for all $d$-spheres.
Take a $(d+1)$-sphere $G$ and pick a vertex $x$ for which both $S(x)$ and $G-x$ are 
contractible. Now, $\chi(G)=\chi(G-x)+\chi(B(x))-\chi(S(x))=1+1-(1+(-1)^d)=1+(-1)^{d+1}$. 
\end{proof}

\paragraph{}
We look now at regular (Platonic) $d$-spheres. For $d=2$, we miss the tetrahedron (because 
this is a 3-dimensional simplex $K_4$) as well as the cube and dodecahedron because their Whitney complexes
are one-dimensional. 
Combinatorially, one can include them using CW complex definitions.
The classification of {\bf Platonic d-spheres} is very simple. First to the {\bf recursive definition}:
a {\bf Platonic d-sphere} is a $d$-sphere for which all
unit spheres are isomorphic to a fixed Platonic $(d-1)$-sphere $H$.
The {\bf curvature} $K(x)$ of a vertex is defined as $K(x)=\sum_{k=0} (-1)^k f_k(x)/(k+1)$,
where $f_k(x)$ is the number of $k$-dimensional simplices $z$ which contain $x$. 

\begin{lemma}[Gauss-Bonnet] $\sum_{x \in V} K(x) = \chi(G)$. 
\end{lemma}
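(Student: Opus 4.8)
The plan is to prove this by a curvature redistribution (double counting) argument, which turns out to be essentially formal. First I would recall that by definition $\chi(G) = \sum_{k \geq 0} (-1)^k f_k$, where $f_k$ is the total number of $k$-dimensional simplices (complete subgraphs $K_{k+1}$) in $G$. The local curvature $K(x) = \sum_{k \geq 0} (-1)^k f_k(x)/(k+1)$ should be read as distributing, for every simplex $z$ containing $x$, the signed weight $(-1)^{{\rm dim}(z)}/({\rm dim}(z)+1)$ to the vertex $x$; the point is that the fractions $1/(k+1)$ are rigged so that summing over vertices reassembles $\chi(G)$.

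The one combinatorial fact I would single out is that a $k$-dimensional simplex $z = K_{k+1}$ has exactly $k+1$ vertices. Hence, summing $f_k(x)$ over all $x \in V$ counts each $k$-simplex exactly $k+1$ times:
$$ \sum_{x \in V} f_k(x) = (k+1)\, f_k . $$
This is the whole content of the lemma; it says the denominator $k+1$ in the definition of $K$ is precisely the number of vertices of a $k$-simplex.

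The conclusion then follows by exchanging the (finite) order of summation:
$$ \sum_{x \in V} K(x) \;=\; \sum_{x \in V} \sum_{k \geq 0} \frac{(-1)^k f_k(x)}{k+1} \;=\; \sum_{k \geq 0} \frac{(-1)^k}{k+1} \sum_{x \in V} f_k(x) \;=\; \sum_{k \geq 0} \frac{(-1)^k}{k+1} (k+1) f_k \;=\; \sum_{k \geq 0} (-1)^k f_k \;=\; \chi(G) . $$
Since $G$ is finite, every sum is finite and the interchange needs no justification, so there is really no obstacle to overcome: the only thing to get right is the counting identity $\sum_{x} f_k(x) = (k+1) f_k$. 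In contrast to the Euler gem theorem, which required the induction on dimension together with the valuation and unit-ball lemmas, Gauss--Bonnet here is an immediate bookkeeping consequence of how $K(x)$ is defined.
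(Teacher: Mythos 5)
Your proof is correct and is essentially the same argument as the paper's: the paper also distributes the charge $\omega(z)=(-1)^{\dim(z)}$ of each simplex equally among its $\dim(z)+1$ vertices, which is exactly your identity $\sum_{x\in V} f_k(x)=(k+1)f_k$ followed by the interchange of summation. You have merely written out explicitly what the paper states in one sentence.
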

\begin{proof}
We think of $\omega(x) = (-1)^{{\rm dim}(x)}$ as a charge attached to a set $x$. 
By definition, $\chi(G) = \sum_{x \in G} \omega(x)$. Now distribute the 
charge $\omega(x)$ from $x$ equally to all zero dimensional parts containing 
in $x$. 
\end{proof}

\paragraph{}
For a triangle-free graph, $K(x)=v_0-v_1/2=1-{\rm deg}(x)/2$. 
For a $2$-graph and in particularly for a $2$-sphere, 
we have $K(x)=v_0-v_1/2+v_2/3=1-{\rm deg}(x)/6$, where ${\rm deg}(x)$
is the {\bf vertex degree}.

\begin{thm}
There exists exactly one Platonic $d$-sphere except for $d=1,d=2$ and $d=3$. 
For $d=1$ there are infinitely many, for $d=2$ and $d=3$ there are two. 
\end{thm}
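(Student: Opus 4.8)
The plan is to prove the statement by working entirely within the recursive definition of Platonic $d$-spheres, so the argument becomes an induction on $d$ that bottoms out in the combinatorics of unit spheres. First I would set up the notation: a Platonic $d$-sphere $G$ has every unit sphere $S(x)$ isomorphic to a fixed Platonic $(d-1)$-sphere $H$, so if I write $g(d)$ for the number of Platonic $d$-spheres (as isomorphism classes), then building a Platonic $d$-sphere amounts to finding all $d$-spheres all of whose links equal a given Platonic $(d-1)$-sphere $H$. The base cases are handled directly: $d=-1$ gives the empty graph (one object, but the statement is about $d\ge 1$), $d=0$ gives $S^0=K_2$ (the unique $0$-sphere, two disjoint vertices), and these feed the inductive machine.

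Next I would treat the three exceptional dimensions by exhibiting the objects and showing there are no others. For $d=1$: a Platonic $1$-sphere is a $1$-sphere in which every unit sphere is $S^0=K_2$, i.e. every vertex has degree $2$, so $G=C_n$ for $n\ge 4$ — infinitely many. For $d=2$: each unit sphere must be a cyclic graph $C_n$ with a \emph{fixed} $n$; a Gauss--Bonnet / Euler-gem count forces $\chi(G)=2$ and $K(x)=1-\deg(x)/6$ constant, giving $n\in\{3,4,5\}$, but $n=3$ ($K_4$, the boundary of the tetrahedron) is excluded because as a Whitney complex $K_4$ is a $3$-simplex not a $2$-sphere — leaving the octahedron ($n=4$) and icosahedron ($n=5$), exactly two. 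For $d=3$: each unit sphere is a Platonic $2$-sphere, hence the octahedron or the icosahedron; the octahedron-link case yields the $16$-cell (the $600$-cell analogue fails combinatorially in the Whitney-complex sense), and more carefully one checks the two surviving objects are the hyperoctahedron (cross-polytope $S^3$, the join $S^0*S^0*S^0*S^0$) and one further exceptional sphere, giving two.

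Then comes the uniqueness claim for all $d\ge 4$: here I would argue that once $d\ge 4$ the only Platonic $(d-1)$-sphere available as a link is the iterated cross-polytope (the $(d-1)$-fold join of $K_2$'s, i.e. the boundary of the $d$-dimensional cross-polytope), because by induction for $d-1\ge 3$ the Platonic $(d-1)$-spheres are few and a local-to-global rigidity forces the "big" (icosahedral-type) candidates to die: attaching icosahedral links around an edge over-determines the triangle count. With the cross-polytope as the forced link, a standard unfolding argument shows the whole graph must be the $d$-fold join $K_2 * K_2 * \cdots * K_2$, i.e. the boundary of the $(d{+}1)$-cross-polytope, which is indeed a Platonic $d$-sphere whose links are $(d{-}1)$-cross-polytopes — so existence and uniqueness both hold, and $g(d)=1$.

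The main obstacle will be the link-rigidity step in dimension $3$ and the passage $d=3\to d=4$: I must rule out, purely combinatorially, Platonic $3$-spheres with icosahedral links (the discrete analogue of the $600$-cell) and more generally show no "exotic" Platonic $d$-sphere survives once $d\ge 4$. The clean way is a counting argument — fix an edge $e=\{x,y\}$; the link of $e$ is the common link inside $S(x)\cong S(y)\cong H$, so $H$ must itself contain, around each vertex, a copy of the link of $H$; iterating this self-similarity constraint through the (already classified) short list of Platonic $(d-1)$-spheres leaves only the cross-polytope for $d-1\ge 3$. I would also double-check the Gauss--Bonnet bookkeeping (using the stated Lemma $\sum_x K(x)=\chi(G)$ and Euler's gem $\chi(S^d)=1+(-1)^d$) to pin down the admissible face-counts in the low dimensions, since that is where the finiteness "$2$" versus "$\infty$" dichotomy actually gets decided.
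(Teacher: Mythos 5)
Your overall architecture (induction on $d$, Gauss--Bonnet plus Euler's gem to constrain the low-dimensional cases, then rigidity of the cross-polytope link for large $d$) matches the paper's, but your handling of $d=3$ contains a genuine error. You assert that ``the $600$-cell analogue fails combinatorially in the Whitney-complex sense.'' It does not: the $600$-cell is a simplicial $3$-sphere whose facets are all tetrahedra and whose vertex links are icosahedra, so it is precisely the second Platonic $3$-sphere the theorem is counting --- the two objects for $d=3$ are the $16$-cell (octahedral links) and the $600$-cell (icosahedral links). The ``fails as a Whitney complex'' phenomenon applies to the cube, dodecahedron and $120$-cell, whose facets are not simplices; you appear to have transferred it to the wrong polytope, and your subsequent phrase ``one further exceptional sphere'' contradicts the parenthetical, so the $d=3$ case as written is internally inconsistent and does not actually exhibit the two objects.

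The second soft spot is the step $d=3\to d=4$. Your proposed mechanism --- ``attaching icosahedral links around an edge over-determines the triangle count'' --- is a heuristic, not an argument. The decisive computation (which the paper uses) is curvature: if every vertex link were the $600$-cell with $f$-vector $(120,720,1200,600)$, then the constant curvature would be $K(x)=1-\tfrac{120}{2}+\tfrac{720}{3}-\tfrac{1200}{4}+\tfrac{600}{5}=1$, and $\sum_x K(x)=\chi(G)=2$ would force $|V|=2$, which is absurd; this is what kills the icosahedral-type candidate and leaves only the cross-polytope link. You do mention ``double-checking the Gauss--Bonnet bookkeeping,'' so the right tool is in your hands, but it needs to be the main argument rather than a footnote. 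Your edge-link self-similarity observation and the unfolding argument for uniqueness of the $d$-sphere with cross-polytope links are reasonable supplements (the paper is itself terse about $d\ge 5$), but they cannot substitute for the curvature computation in dimension $4$.
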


\begin{proof}
$d=-1,0,1$ are clear. For $d=2$, the curvature $K(x)=1-V_0/2 + V_1/3-V_2/3$ is 
constant, adding up to $2$. It is either $1/3$ or $1/6$. For $d=3$, where 
each $S(x)$ must be either the octahedron or icosahedron, $G$ is the
$16$-cell or $600$-cell. For $d=4$, by Gauss-Bonnet, $K(x)$ add up to $2$ and be of the form $L/12$. 
For $L=1$, there exists the $4$-dimensional cross-polytop with $f$-vector $(10, 40, 80, 80, 32)$.
There is no $4$-sphere, for which $S(x)$ is the $600$-cell as the $f$-vector of it is $(120,720,1200,600)$.
We would get $K(x)=1-120/2+720/3-1200/4+600/5 =1$ requiring $|V|=2$ and ${\rm dim}(G) \leq 1$.
\end{proof}

\begin{figure}[!htpb]
\scalebox{1.0}{\includegraphics{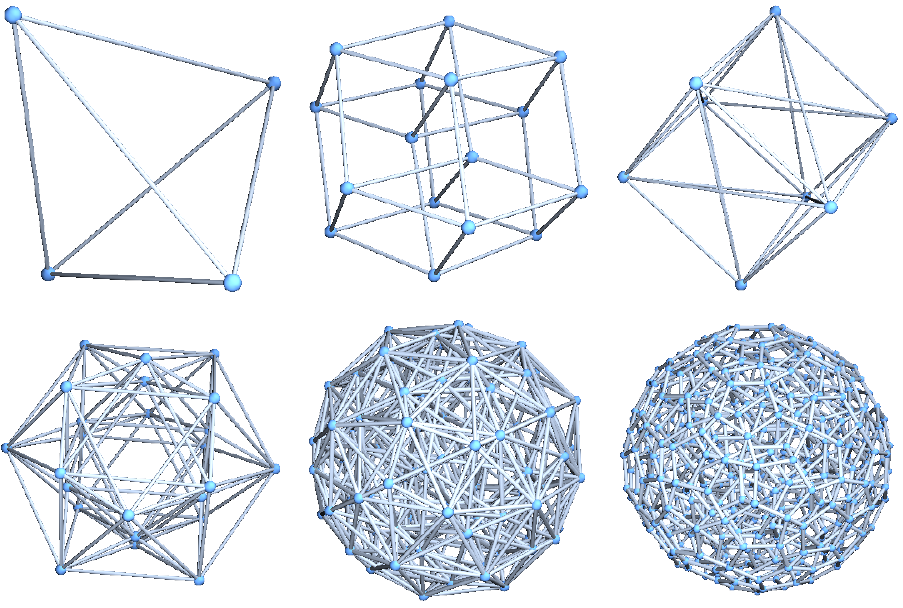}}
\label{Figure 14}
\caption{
The six classical $4$-polytopes are the $5$-cell, $8$-cell, $16$-cell, $24$-cell, the $600$-cell and 
$120$-cell. Only the $16$-cell and the $600$-cell are $3$-spheres, 
discrete $3$-dimensional discrete manifold.  
}
\end{figure}

\paragraph{}
In order to include all classically defined Platonic solids (including also the
dodecahedron and cube in $d=2$), we would need to use
{\bf discrete CW complexes}. Recursively, a {\bf $k$-cell} requires to identify a $(k-1)$ sub sphere in the
already constructed part. A CW-complex defines the Barycentric graph, where the cells are
the vertex set and where two cells are connected if one is contained in the other. 
A CW complex is {\bf contractible} if its graph is contractible. A CW complex $G$ is a {\bf d-sphere}
if its Barycentric graph $\phi(G)$ is a {\bf d-sphere}.
Each cell in a complex has a dimension ${\rm dim}(x)$, which is one more than the dimension
of the sphere, the cell has been attached to.  One can then define a {\bf Platonic d-sphere}, 
to have the property that for every dimension $k$, there exists a Platonic CW $k$-sphere $H_k$ 
such that for every cell of dimension $k$, the unit sphere is isomorphic to $H_k$. 

\paragraph{} The {\bf Zykov sum} \cite{Zykov} or {\bf join} of two graphs $G=(V,E),H=(W,F)$ 
is the graph $G+H=(V \cup W, E \cup F \cup \{ \{a,b\} | \; a \in V, b \in W \}$. 
For example, the Zykov sum of $S_0 + S_0 = C_4$. And $C_4 + S_0 = O$ is the octahedron graph. 
The sum of $G$ with the $0$-sphere $S_0$ is called the {\bf suspension}.
The sum of $G$ with $1=K_1$ is a {\bf cone extension} and by definition always a ball. 
One can quickly see from the definition that under taking graph complements, the 
{\bf Zykov-Sabidussi ring} with Zykov join as addition and large multiplication is dual to the 
{\bf Shannon} ring, where the addition is the disjoint union and where the multiplication 
is the strong product. 

\begin{lemma}
If $H$ is contractible and $K$ arbitrary then the join $H+K$ is contractible.
\end{lemma}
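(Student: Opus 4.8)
The plan is to prove that $H+K$ is contractible by induction on the number of vertices of the contractible graph $H$, using the recursive definition of contractibility together with the observation that the join operation interacts well with vertex removal and with unit spheres. First I would recall that $H$ contractible means either $H=K_1$, the one-point graph, or there is a vertex $x$ in $H$ such that both $H-x$ and $S_H(x)$ (the unit sphere of $x$ inside $H$) are contractible, and that in either case the inductive hypothesis applies to the strictly smaller graphs $H-x$ and $S_H(x)$.

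For the base case, if $H=K_1$ with single vertex $v$, then $H+K$ is precisely the cone extension of $K$ over the apex $v$; by the remark in the appendix that the sum of a graph with $1=K_1$ is a cone extension and is ``by definition always a ball,'' and by the Unit Ball Lemma that every unit ball is contractible, we conclude $H+K$ is contractible. For the inductive step, pick $x$ in $H$ with $H-x$ and $S_H(x)$ both contractible. The key structural facts I would verify are: (i) $(H+K)-x = (H-x)+K$, since deleting a vertex of $H$ from the join just deletes it from the $H$-side and leaves all join-edges among remaining vertices intact; and (ii) the unit sphere of $x$ inside $H+K$ is $S_{H+K}(x) = S_H(x)+K$, because $x$ is joined to every vertex of $K$ and to exactly its $H$-neighbors, so the induced graph on the neighborhood of $x$ is the join of $S_H(x)$ with $K$. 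Granting (i) and (ii), the inductive hypothesis applied with the smaller contractible graphs $H-x$ and $S_H(x)$ in place of $H$ shows that both $(H-x)+K$ and $S_H(x)+K$ are contractible, i.e. both $(H+K)-x$ and $S_{H+K}(x)$ are contractible, which is exactly the condition needed to conclude $H+K$ is contractible.

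The main obstacle, such as it is, will be the careful bookkeeping in establishing identities (i) and (ii) directly from the definition of the Zykov join $G+H=(V\cup W, E\cup F\cup\{\{a,b\}\mid a\in V,b\in W\})$: one must check that no edges are lost or spuriously created when restricting to the relevant induced subgraphs, and in particular that the disjointness of the vertex sets $V(H)$ and $V(K)$ is used (the paper's convention that the join is taken of graphs with disjoint vertex sets). This is routine but is the only place where something could go wrong. I would also note that the base case could alternatively be folded into the induction by observing that $K_1+K = B(v)$ is a unit ball and invoking the Unit Ball Lemma, so the whole argument reduces to the two induced-subgraph identities plus one application of an already-proved lemma.
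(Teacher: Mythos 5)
Your proof is correct and follows essentially the same route as the paper: induction on the number of vertices of $H$, using the identities $(H+K)-x=(H-x)+K$ and $S_{H+K}(x)=S_H(x)+K$ together with the cone/unit-ball base case. In fact your writeup is more complete than the paper's own two-line argument, which only records that $S(x)+K$ (and the cone over it) is contractible and leaves the verification of the second condition of the recursive definition, contractibility of $(H+K)-x$, implicit.
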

\begin{proof}
Use induction with respect to then number of vertices in $H$. It is clear for $H=K_1$. 
In general, there is a vertex in $H$ for which $S(x)$ is contractible. As $S(x)+K$ is 
contractible also $(y+S(x)) + K = y + (S(x) + K)$ contractible. 
\end{proof} 

\begin{lemma}
The join $G$ of two spheres $H+K$ is a sphere. 
\end{lemma}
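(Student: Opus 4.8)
The goal is to show that if $H$ is an $a$-sphere and $K$ is a $b$-sphere, then the join $H+K$ is an $(a+b+1)$-sphere. I would proceed by double induction on the pair $(a,b)$, or equivalently on the total number of vertices in $H+K$. The base cases handle the degenerate sphere: if $H=0$ is the $(-1)$-sphere, then $H+K=K$, which is a $b$-sphere, matching $a+b+1 = b$; symmetrically for $K=0$. So assume both $a,b\ge 0$, so that $H$ and $K$ each have at least two vertices.

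\textbf{Verifying the unit sphere condition.} Pick any vertex $x$ in $H+K$. By symmetry of the join, assume $x\in V(H)$. The key algebraic identity is that the unit sphere of $x$ inside $H+K$ is $S_H(x)+K$, the join of the unit sphere of $x$ in $H$ with all of $K$: every vertex of $K$ is adjacent to $x$ by definition of the join, and a vertex $y\in V(H)$ is adjacent to $x$ in $H+K$ iff it is adjacent to $x$ in $H$. Now, since $H$ is an $a$-sphere, $S_H(x)$ is an $(a-1)$-sphere. By the induction hypothesis applied to the pair $(a-1,b)$, which has strictly fewer total vertices, the join $S_H(x)+K$ is an $((a-1)+b+1)$-sphere, i.e.\ an $(a+b)$-sphere. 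This is exactly the required dimension for the unit sphere of a point in an $(a+b+1)$-sphere.

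\textbf{Verifying the contractible-complement condition.} It remains to exhibit a vertex $z$ of $H+K$ such that $(H+K)-z$ is contractible. Since $H$ is an $a$-sphere with $a\ge 0$, there is a vertex $z\in V(H)$ with $H-z$ contractible. I claim $(H+K)-z = (H-z)+K$: removing $z$ from the join simply removes $z$ from the $H$-part and leaves all join edges to $K$ intact. Now $(H-z)+K$ is the join of a contractible graph $H-z$ with an arbitrary graph $K$, hence contractible by the preceding lemma (``If $H$ is contractible and $K$ arbitrary then the join $H+K$ is contractible''). This completes the induction, establishing that $H+K$ is an $(a+b+1)$-sphere.

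\textbf{Main obstacle.} The only genuine content is the two structural identities $S_{H+K}(x) = S_H(x)+K$ and $(H+K)-z = (H-z)+K$; once these are in hand, everything reduces to the inductive hypothesis and the already-proved contractibility lemma. The subtle point to be careful about is bookkeeping with the dimension: the recursive definition of $d$-sphere demands \emph{every} unit sphere be a $(d-1)$-sphere, so one must check that the identity and the induction cover the cases $x\in V(H)$ and $x\in V(K)$ uniformly, which they do by the symmetry of the join construction. No single step is hard; the care needed is entirely in making the induction parameters decrease properly in the degenerate ($-1$-sphere) base cases.
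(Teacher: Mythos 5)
Your proof is correct and follows essentially the same route as the paper: both rest on the identities $S_{H+K}(x)=S_H(x)+K$ (resp. $H+S_K(x)$) for the unit-sphere condition and on $(H+K)-z=(H-z)+K$ together with the preceding lemma (contractible join contractible) for the collapse condition. Your version is in fact slightly more careful than the paper's, since you make the induction parameter and the degenerate $(-1)$-sphere base cases explicit.
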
 
\begin{proof}
Use induction. One can use the fact that for $x \in V(H)$, we have 
$S_{H+K}(x) = S_H(x) + K$ which is a sphere and 
for $x \in V(K)$ one has $S_{H+K}(x) = H + S_K(x)$.  This shows that all 
unit spheres are spheres. Furthermore, we have to show that when removing
one vertex, we get a contractible graph. For $x \in V(H)$ one has then a 
sum of a contractible $H-x$ and $K$, for $x \in V(K)$ one has 
a sum of $H$ and $K-x$ which are both contractible. \\
For a suspension $G \to G+P_2$, it is more direct. Use induction:
if $G=H + \{a,b\}$ with $H=(V,E)$ then every unit sphere of $x \in V(H)$ becomes
after suspension a unit sphere of $G$. The unit spheres $S(a)=S(b)=H$ are already 
spheres. If we take away $a$ from $G$, then we have $G-a=H + \{b\}$ which is a ball
and so contractible. In general: either take a away a vertex $x$ of $H$ or from $K$. 
Now $H-x$ is contractible and so $(H-x) + K$ contractible by the previous lemma. 
That means $G-x$ is contractible. 
\end{proof} 

\begin{lemma}
The genus $j(G)=1-\chi(G)$ satisfies $j(H) j(G) = j(H+G)$. 
\end{lemma}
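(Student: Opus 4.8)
The plan is to read the identity off from a direct combinatorial count of the simplices of the join, rather than from an induction. First I would record the structural observation that, because the vertex sets $V$ of $H$ and $W$ of $G$ are disjoint, the non-empty complete subgraphs of $H+G$ are exactly the disjoint unions $\sigma\sqcup\tau$ in which $\sigma$ is either a simplex of $H$ or the empty set, $\tau$ is either a simplex of $G$ or the empty set, and $\sigma,\tau$ are not both empty: any vertex of $\sigma$ and any vertex of $\tau$ span an edge of $H+G$ by definition of the join, completeness inside $\sigma$ and inside $\tau$ is inherited from $H$ and $G$, and conversely a complete subgraph of $H+G$ restricts to a complete subgraph on $V$ and to one on $W$. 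Adjoining the empty simplex on each side, this becomes a bijection between $\mathcal{S}(H+G)\cup\{\emptyset\}$ and $\bigl(\mathcal{S}(H)\cup\{\emptyset\}\bigr)\times\bigl(\mathcal{S}(G)\cup\{\emptyset\}\bigr)$, where $\mathcal{S}(\cdot)$ denotes the set of simplices (complete subgraphs).

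Next I would rewrite the genus in a form adapted to this bijection. Using $\dim(\sigma)=|\sigma|-1$ in $\chi(K)=\sum_{\sigma\in\mathcal{S}(K)}(-1)^{\dim(\sigma)}$ one gets $\sum_{\sigma\in\mathcal{S}(K)}(-1)^{|\sigma|}=-\chi(K)$, hence
$$ j(K)\;=\;1-\chi(K)\;=\;\sum_{\sigma\in\mathcal{S}(K)\cup\{\emptyset\}}(-1)^{|\sigma|} \; , $$
the augmentation by $\emptyset$ supplying precisely the summand $(-1)^0=1$. Since $|\sigma\sqcup\tau|=|\sigma|+|\tau|$, the bijection above makes the sum defining $j(H+G)$ factor:
$$ j(H+G)\;=\;\sum_{\sigma}\sum_{\tau}(-1)^{|\sigma|+|\tau|}\;=\;\Bigl(\sum_{\sigma}(-1)^{|\sigma|}\Bigr)\Bigl(\sum_{\tau}(-1)^{|\tau|}\Bigr)\;=\;j(H)\,j(G) \; , $$
where $\sigma$ runs over $\mathcal{S}(H)\cup\{\emptyset\}$ and $\tau$ over $\mathcal{S}(G)\cup\{\emptyset\}$. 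This is the asserted identity $j(H)\,j(G)=j(H+G)$.

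I do not expect a genuine obstacle here; the only point needing care is the bookkeeping with the empty simplex, i.e. making sure that the $+1$ in $j=1-\chi$ is exactly absorbed by augmenting both simplex sets with $\emptyset$, so that the product of sums comes out with no leftover correction term. A route avoiding generating functions would be to prove $\chi(H+G)=\chi(H)+\chi(G)-\chi(H)\chi(G)$ by induction on $|V(H)|$ via the valuation lemma (splitting $H$ at a vertex $y$ into $B(y)=y+S(y)$ and $H-y$, and using $(y+S(y))+G=y+(S(y)+G)$ together with the preceding lemma that a join with a contractible graph is contractible), and then observe $1-\chi(H+G)=(1-\chi(H))(1-\chi(G))$; but the direct count above is shorter and makes the multiplicativity transparent, and it dovetails with the Zykov–Shannon duality remark that precedes the lemma.
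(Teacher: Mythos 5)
Your argument is correct, and it is worth noting that the paper itself states this lemma with no proof at all, so you are not diverging from the paper's route so much as supplying one. The two ingredients you use are both sound: (i) since $V(H)$ and $V(W)$ are disjoint and every cross pair is an edge of the join, a subset of $V\cup W$ induces a complete subgraph of $H+G$ exactly when its traces on $V$ and $W$ each induce complete subgraphs (or are empty), giving the bijection $\mathcal{S}(H+G)\cup\{\emptyset\}\cong(\mathcal{S}(H)\cup\{\emptyset\})\times(\mathcal{S}(G)\cup\{\emptyset\})$; and (ii) with the paper's convention $\chi(K)=\sum_{\sigma}(-1)^{\dim(\sigma)}$ and $\dim(\sigma)=|\sigma|-1$, the quantity $j(K)=1-\chi(K)$ is precisely $\sum_{\sigma\in\mathcal{S}(K)\cup\{\emptyset\}}(-1)^{|\sigma|}$, so the sum over the join factors as a product because $|\sigma\sqcup\tau|=|\sigma|+|\tau|$. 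The one place that needs care, the absorption of the $+1$ in $j=1-\chi$ by the empty simplex on each factor, is exactly where you put the emphasis, and it works out with no leftover term. Your alternative inductive route via the valuation lemma and the contractibility of joins with contractible graphs would also go through and would match the style of the surrounding lemmas in the appendix, but the direct count is shorter and makes the multiplicativity (equivalently, the identity $\chi(H+G)=\chi(H)+\chi(G)-\chi(H)\chi(G)$) transparent.
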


The set of spheres with join operation is a {\bf monoid} with zero element $0$. 
The {\bf graph complement} of $G=(V,E)$ with $|V|=n$ is the graph $\overline{G}= (V,E^c)$ where
$E^c$ is the complement $E(K_n) \setminus E(G)$. Given two graphs $G,H$, denote by 
$G \oplus H$ the disjoint union of $G$ and $H$. We have $(G + H)^c = G^c \oplus H^c$. 

\paragraph{}
A graph $G$ is an {\bf Zykov prime} if $G$ can not be written as $G=A+B$, where $A,B$ are 
graphs. The primes in the Zykov monoid are the graphs for which the graph 
complement $\overline{G}$ is connected. 
As there is a unique prime factorization in the dual monoid, we have a unique
additive prime factorization in the Zykov monoid. 
The map $i$ which maps a sphere to its genus satisfies $j(G + H) = j(G) j(H)$. 
We can extend Euler characteristic to negative graphs and since $j(0)=1$ we should define
$j(-G)=j(G)$. 

\paragraph{} Examples of contractible graphs are complete graphs, star graphs, 
wheel graphs or line graphs. We can build a contractible graph recursively 
by choosing a contractible subgraph and building a cone extension over this. 
In general, if we make a cone extension over $A$, then the Euler characteristic
changes by $1-\chi(A))$. This change is called the Poincar\'e-Hopf index. 
Examples of 1-spheres are cyclic graphs. From all the classical platonic solids, 
the octahedron and the icosahedron are 2-spheres. From the classical 3-polytopes, 
as shown, only the 16 cell and the 600 cell are e-spheres. 

\begin{figure}[!htpb]
\scalebox{1.0}{\includegraphics{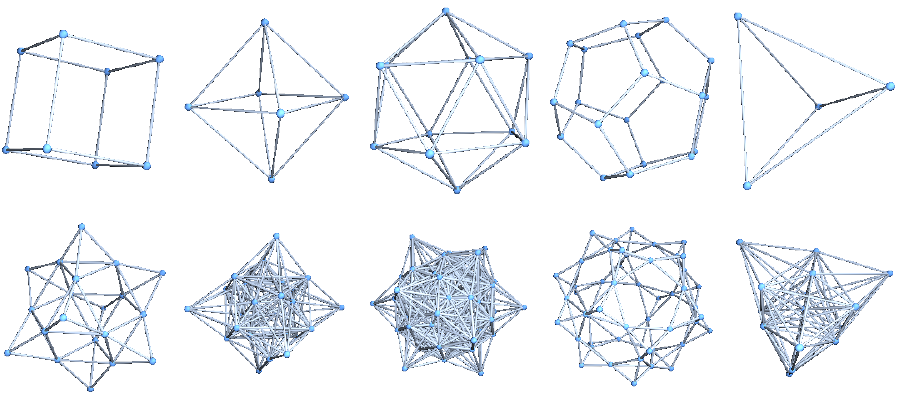}}
\label{Figure 10}
\caption{
The five platonic solid complexes and their connection graphs.
Only the octahedron and icosahedron are $d=2$-spheres with $\chi(G)=2$.
The cube has Euler characteristic $-4$, the dodecahedron has 
Euler characteristic $-10$, the tetrahedron has Euler characteristic $-1$.
All connection graphs are homotopic to their platonic solid graphs except
for the octahedron, where the connection graph is a homology 3-sphere and 
has maximal dimension $8$ (there are 9 complete subgraphs intersecting all each other
at each vertex so that there is $K_9$ subgraph of the connection graph. 
}
\end{figure}

\paragraph{}
Results on graphs immediately go over to {\bf finite abstract simplicial complexes} $G$ because 
the Barycentric graph $\phi(G)$ of $G$ is a graph.
A simplicial complex is a $d$-sphere if its Barycentric refinement is a $d$-sphere. 
The class of objects can be generalized discrete CW-complexes,
where cells take the role of balls but the boundary of a ball
does not have to be a skeleton of a simplex. Every cell has a dimension attached.
We can see the cube or the dodecahedron as a sphere. 
The definition of Platonic sphere must be adapted in that we require every 
unit sphere $S(x)$ is isomorphic to a fixed Platonic $d-1$ sphere which only 
depends on the dimension of $x$. Now the classification is the Schl\"afli 
classification $(1,1,\infty,5,6,3,3,3,3,\dots)$. In the graph case, the numbers are
$(1,1,\infty,2,2,1,1,1,1,\dots)$.

\begin{figure}[!htpb]
\scalebox{1.0}{\includegraphics{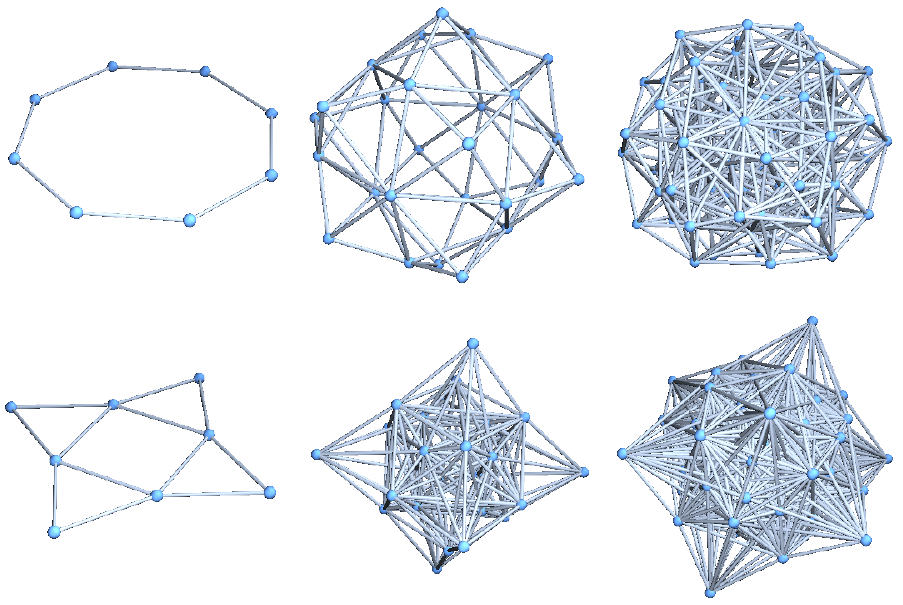}}
\label{Figure 15}
\caption{
The Barycentric graphs $\phi(S^k)$ of the one,two and three dimensional spheres. 
The 3-sphere is the 16-cell. 
Below the connection graphs $\psi(S^k)$ of the 1,2,3 spheres. They are homotopic. 
}
\end{figure}

\paragraph{}
\cite{Weyl1925} asked to characterize spheres combinatorially.
The question whether some combinatorial data can answer this without homotopy is still unknown. 
Robin Forman \cite{forman95} defined spheres using Morse theory as classically through Reeb:
spheres can be characterized as manifolds which admit a Morse function
with exactly 2 critical points. 
The inductive definition of \cite{Ivashchenko1993}, simplified by \cite{CYY}
goes back to Whitehead and is equivalent \cite{knillreeb}.
The story of polyhedra is told in \cite{Richeson,coxeter}. Historically, it was
\cite{Schlafli,Schoute,AliciaBooleStott}.

\paragraph{}
In \cite{knillgraphcoloring2}, Platonic spheres were defined $d$-spheres for which all
unit spheres are Platonic $(d-1)$-spheres. Gauss-bonnet \cite{cherngaussbonnet}
have the classification all $1$-dimensional spheres $C_n, n>3$ are
Platonic for $d=1$, the Octahedron and Icosahedron are the two Platonic $2$-spheres,
the sixteen and six-hundred cells are the Platonic $3$-spheres. 
For earlier appearances of Gauss-Bonnet theorem \cite{cherngaussbonnet}
see \cite{I94a,levitt1992,forman2000}.

\paragraph{}
The Euler gem episode illustrates the value of precise definitions are \cite{Richeson}
especially if the continuum is involved. Already when working with $1$-spheres, 
which are often modeled as polygons, one can get into muddy waters capturing what a
polygon embedded into Euclidean space is Are self-intersections allowed? Do
polygons have to be convex? The one-dimensional case gives a taste about the 
confusions which triggered the ``proof and refutation" dialog  \cite{Lakatos}.
Being free from Euclidean realizations places the topic into combinatorics
so that the results can be accepted by a finitist like Brouwer who would not even 
accept the existence of the real number line.

\bibliographystyle{plain}

\end{document}